\newtheorem{thm}{Theorem}[section]
\newtheorem{lem}[thm]{Lemma}
\newtheorem{cor}[thm]{Corollary}
\newtheorem{prop}[thm]{Proposition}
\newtheorem{defn}[thm]{Definition}
\newtheorem{hypothesis}[thm]{Hypothesis}
\theoremstyle{remark}
\newtheorem{rmk}{Remark}
\numberwithin{equation}{section}
\newcommand{\bel}{\begin{equation} \label}
\newcommand{\ee}{\end{equation}}
\def\beq{\begin{equation}}
\def\eeq{\end{equation}}
\newcommand{\bea}{\begin{eqnarray}}
\newcommand{\eea}{\end{eqnarray}}
\newcommand{\beas}{\begin{eqnarray*}}
\newcommand{\eeas}{\end{eqnarray*}}
\newcommand{\pd}{\partial}
\newcommand{\floor}[1]{\lfloor #1 \rfloor}
\newcommand{\R}{\mathbb{R}}
\newcommand{\N}{\mathbb{N}}
\newcommand{\cO}{\mathcal{O}}
\newcommand{\M}{\mathcal{M}}
\newcommand{\dis}{\mathrm{dist}\,}
\newcommand{\supp}{\mathrm{supp}\,}  
\def\epsilon{\varepsilon}
\def\phi {\varphi}
\def\p{\partial}
\newcommand{\tnorm}{\vert\hspace{-0.3mm}\Vert }
\renewcommand{\leq}{\leqslant}
\renewcommand{\geq}{\geqslant}
\numberwithin{equation}{section}
\title[A Fully Discrete numerical control method for the wave equation]{A Fully Discrete numerical Control Method For The Wave equation}
\author[Erik Burman]{Erik Burman}
\address{Department of Mathematics, University College London, London, UK-WC1E  6BT, United Kingdom}
\email{e.burman@ucl.ac.uk}
\author[Ali Feizmohammadi]{Ali Feizmohammadi}
\address{Department of Mathematics, University College London, London, UK-WC1E  6BT, United Kingdom}
\email{a.feizmohammadi@ucl.ac.uk}
\author[Lauri Oksanen]{Lauri Oksanen}
\address{Department of Mathematics, University College London, London, UK-WC1E  6BT, United Kingdom}
\email{l.oksanen@ucl.ac.uk}
\date{}
\begin{document}
\maketitle
\begin{abstract}
We present a fully discrete finite element method for the interior null controllability problem subject to the wave equation. For the numerical scheme, piece-wise affine continuous elements in space and finite differences in time are considered. We show that if the sharp geometric control condition holds, our numerical scheme yields the optimal rate of convergence with respect to the space-time mesh parameter $h$. The approach is based on the design of stabilization terms for the discrete scheme with the goal of minimizing the computational error. 
\end{abstract}
\section{Introduction}
\label{intro}
We consider the now classical interior null controllability problem for the wave equation formulated as follows.
Let $T>0$, $\Omega \subset \R^n$ with $n\in \{2,3\}$ be a connected bounded open set with smooth boundary and finally let $\omega \subset \Omega$ be an open set. We define $\mathcal{M}=(0,T)\times \Omega$ and $\mathcal{O}=(0,T)\times \omega$ and for each $$(g_0,g_1,U) \in H^1_0(\Omega)\times L^2(\Omega) \times L^2(\M),$$ consider the unique weak solution 
$$u \in \mathcal C(0,T;H^1_0(\Omega)) \cap \mathcal C^1(0,T;L^2(\Omega))$$ 
of the following initial boundary value problem (IBVP):  
\bel{pf}
\begin{aligned}
\begin{cases}
\Box u=\partial^2_t u - \Delta u = \chi_{\omega}U, 
&\forall (t,x) \in \M,
\\
u(t,x) =0,
&\forall (t,x)\in (0,T) \times \p \Omega,
\\
u(0,x)= g_0,\, \p_t u(0,x)=g_1,
&\forall x \in \Omega.
\end{cases}
    \end{aligned}
\ee
Here $\chi_{\omega}$ is a suitable non-negative smooth function that is localized in $\omega$ and is independent of the time parameter $t$. 


{\em The null controllability problem} consists of determining a control function $U_*$, such that the solution $u$ to equation \eqref{pf} with $U=U_*$ satisfies
\bel{ctrl}
(u(T,x),\p_t u(T,x))=(0,0) \quad \forall x \in \Omega.
\ee

This paper is concerned with a numerical scheme for solving the null
controllability problem \eqref{pf}$-$\eqref{ctrl}, based on the Finite
Element Method (FEM). In particular, we will prove optimal rate of
convergence of the error in the $H^1$-norm of the state variable $u$,
with respect to the space-time mesh parameter, assuming only the
geometric control condition by Bardos, Lebeau and Rauch \cite{BLR,BLRII}. To
our knowledge, the present result is first one giving optimal
convergence rate in general geometries in dimensions two and three. 

\subsection{The geometric control condition and observability estimates}
We begin with recalling the geometric control condition by Bardos, Lebeau and Rauch:
\begin{defn}[See \cite{BLRII,LLTT}]
We say that $\tilde{\mathcal O}=(0,T)\times \tilde\omega$ satisfies the geometric control condition, if every compressed generalized bicharacteristic intersects the set $\tilde{\mathcal O}$.
\end{defn}
This roughly states that all geometric optic rays propgating in $\M$ must intersect the region $\tilde{\mathcal O}$, taking into account possible reflections of the rays at the boundary. Next, we recall the following observability estimate originating from \cite{BLR,BLRII}. The formulation here is based on \cite[Proposition 1.2]{LLTT} and is stated as it appears in \cite[Theorem 2.2]{BFO}:
\begin{thm}(Interior observability estimate)
\label{observability}
Let $T>0$, $\tilde{\omega} \subset \Omega$. Suppose that the set $\tilde{\mathcal{O}}=[0,T]\times \tilde{\omega}$ satisfies the geometric control condition. Let $U \in L^2(\M)$ with $U|_{(0,T)\times\p \Omega}\in L^2((0,T)\times \p \Omega)$ and $\Box U\in H^{-1}(\M)$, where $H^{-1}(\M)$ denotes the topological dual of $H^1_0(\M)$. Then,
$$ U \in \mathcal C^1(0,T;H^{-1}(\Omega))\cap \mathcal C(0,T;L^2(\Omega).$$
Moreover, there exists $C_0>0$ such that the following estimate holds:
$$\sup_{t \in [0,T]}(\|U(t,\cdot)\|_{L^2(\Omega)}+\|\p_t U(t,\cdot)\|_{H^{-1}(\Omega)}) \leq C_0(\|U\|_{L^2(\tilde{\mathcal O})}+\|\Box U\|_{H^{-1}(\M)}+\|U\|_{L^2((0,T)\times\p \Omega)}).$$
\end{thm}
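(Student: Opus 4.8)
The plan is to peel off the inhomogeneity (the source $\Box U$ and the lateral trace of $U$) by a transposition argument, so that the remainder is a genuine free wave for the Dirichlet problem, and then apply the Bardos--Lebeau--Rauch observability to that free wave. Concretely, I would first set $f := \Box U \in H^{-1}(\mathcal{M})$ and $g := U|_{(0,T)\times\partial\Omega}\in L^2((0,T)\times\partial\Omega)$, and let $W$ be the transposition (very weak) solution of $\Box W = f$ in $\mathcal{M}$, $W = g$ on $(0,T)\times\partial\Omega$, $W(0,\cdot)=\partial_t W(0,\cdot)=0$. Using the Lions--Magenes transposition method together with the hidden-regularity bound $\|\partial_\nu\psi\|_{L^2((0,T)\times\partial\Omega)}\le C\|(\psi_0,\psi_1)\|_{H^1_0(\Omega)\times L^2(\Omega)}$ for the adjoint Dirichlet wave problem, one gets such a $W$ with
\[
W\in\mathcal C(0,T;L^2(\Omega))\cap\mathcal C^1(0,T;H^{-1}(\Omega)),\qquad \|W\|_{\mathcal C(L^2)\cap\mathcal C^1(H^{-1})}\le C\big(\|f\|_{H^{-1}(\mathcal{M})}+\|g\|_{L^2((0,T)\times\partial\Omega)}\big),
\]
and in particular the same bound for $\|W\|_{L^2(\mathcal{M})}$. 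This is essentially \cite[Proposition 1.2]{LLTT}; making it work when $f$ is only in $H^{-1}(\mathcal{M})$ (rather than $L^1(0,T;H^{-1}(\Omega))$) and $g$ is only $L^2$ — by splitting $f=f_0+\partial_t f_1+\mathrm{div}_x f_2$ with $f_j\in L^2(\mathcal{M})$ and pairing each piece against energy-level adjoint solutions — is the technical but classical part.

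Next I would put $V := U - W \in L^2(\mathcal{M})$. By construction $\Box V = 0$ in $\mathcal{M}$ and $V$ has vanishing lateral trace (in the transposition sense, since $U$ and $W$ share the trace $g$), so $V$ is a very weak solution of the free Dirichlet wave equation. Expanding $V(t,\cdot)=\sum_k v_k(t)e_k$ in the Dirichlet eigenbasis $\{e_k\}$ of $-\Delta$ on $\Omega$ with eigenvalues $\lambda_k$, each $v_k$ solves $v_k''+\lambda_k v_k=0$, so $v_k(t)=\alpha_k\cos(\sqrt{\lambda_k}\,t)+\beta_k\sin(\sqrt{\lambda_k}\,t)$, and a short computation with this expansion (using $\lambda_k\ge\lambda_1>0$ and $T>0$ fixed) shows that $\|V\|_{L^2(\mathcal{M})}$ is comparable to $\|(V(0,\cdot),\partial_t V(0,\cdot))\|_{L^2(\Omega)\times H^{-1}(\Omega)}$ and to $\sup_{t\in[0,T]}(\|V(t,\cdot)\|_{L^2(\Omega)}+\|\partial_t V(t,\cdot)\|_{H^{-1}(\Omega)})$ (equivalently, $(-\Delta)^{-1/2}V$ is a finite-energy free wave whose energy is conserved). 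In particular these quantities are finite, $V\in\mathcal C(0,T;L^2(\Omega))\cap\mathcal C^1(0,T;H^{-1}(\Omega))$, hence so is $U=V+W$, which proves the asserted regularity, and
\[
\sup_{t\in[0,T]}\big(\|V(t,\cdot)\|_{L^2(\Omega)}+\|\partial_t V(t,\cdot)\|_{H^{-1}(\Omega)}\big)\le C\,\|V\|_{L^2(\mathcal{M})},\qquad C=C(T,\Omega).
\]

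Finally, since $\tilde{\mathcal{O}}=[0,T]\times\tilde\omega$ satisfies the geometric control condition, the classical interior observability inequality applies to the free wave $V$: by the propagation of microlocal defect measures along compressed generalized bicharacteristics from \cite{BLR,BLRII} (see also \cite{LLTT}), $\|V\|_{L^2(\mathcal{M})}\le C\,\|V\|_{L^2(\tilde{\mathcal{O}})}$. Combining this with the previous display and the triangle inequalities $\|V\|_{L^2(\tilde{\mathcal{O}})}\le\|U\|_{L^2(\tilde{\mathcal{O}})}+\|W\|_{L^2(\mathcal{M})}$ and $\sup_t(\|U\|_{L^2}+\|\partial_t U\|_{H^{-1}})\le\sup_t(\|V\|_{L^2}+\|\partial_t V\|_{H^{-1}})+\sup_t(\|W\|_{L^2}+\|\partial_t W\|_{H^{-1}})$, and feeding in the bound on $W$ from the first step, yields
\[
\sup_{t\in[0,T]}\big(\|U(t,\cdot)\|_{L^2(\Omega)}+\|\partial_t U(t,\cdot)\|_{H^{-1}(\Omega)}\big)\le C\big(\|U\|_{L^2(\tilde{\mathcal{O}})}+\|\Box U\|_{H^{-1}(\mathcal{M})}+\|U\|_{L^2((0,T)\times\partial\Omega)}\big),
\]
which is the claim.

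The routine obstacle is the transposition step: ensuring $W$ is well defined and continuous in time when the source lies only in $H^{-1}(\mathcal{M})$ and the Dirichlet datum only in $L^2$. The genuinely deep input, however, is the observability for $V$ in the last step — the microlocal argument under the geometric control condition, together with the unique-continuation (compactness--uniqueness) step that handles the lower-order contributions — which I would invoke from \cite{BLR,BLRII,LLTT} rather than reprove.
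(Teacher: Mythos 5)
The paper itself offers no proof of Theorem~\ref{observability}: it is imported verbatim from \cite[Theorem 2.2]{BFO}, which in turn rests on \cite[Proposition 1.2]{LLTT} and ultimately on \cite{BLR,BLRII}. So there is no internal argument to measure your sketch against; your architecture (peel off the inhomogeneity by a zero-data transposition solution $W$, observe the free remainder $V=U-W$ at the $L^2(\Omega)\times H^{-1}(\Omega)$ level under the geometric control condition, reassemble by the triangle inequality) is the standard organization of such perturbed observability estimates, and the free-wave part of your argument (eigenfunction expansion, comparability of $\|V\|_{L^2(\mathcal{M})}$ with the conserved $L^2\times H^{-1}$ norm, citing \cite{BLR,BLRII,LLTT} for $\|V\|_{L^2(\mathcal{M})}\lesssim \|V\|_{L^2(\tilde{\mathcal{O}})}$) is fine, modulo the justification that the lateral trace of $V$ vanishes so that the integration by parts yielding $v_k''+\lambda_k v_k=0$ is legitimate for a merely $L^2(\mathcal{M})$ solution.

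The genuine gap is in the step you dismiss as routine: constructing $W$ with $\Box W=f$, $W|_{(0,T)\times\partial\Omega}=g$, zero Cauchy data, and $\sup_t\bigl(\|W(t)\|_{L^2(\Omega)}+\|\partial_t W(t)\|_{H^{-1}(\Omega)}\bigr)\lesssim \|f\|_{H^{-1}(\mathcal{M})}+\|g\|_{L^2((0,T)\times\partial\Omega)}$ when $f$ lies only in the dual of the \emph{space--time} space $H^1_0(\mathcal{M})$. Two concrete obstructions. (i) Elements of $(H^1_0(\mathcal{M}))'$ can only be paired against functions vanishing on all of $\partial\mathcal{M}$, including the faces $t=0$ and $t=T$; the Lions--Magenes adjoint solutions used in transposition (zero data at $t=T$, arbitrary at $t=0$) are not admissible test functions, so the duality that is supposed to define $W$ is not available as written, and cutting off near $t=0$ produces commutator terms involving exactly the quantity $\partial_t U$ you are trying to control. (ii) The splitting $f=f_0+\partial_t f_1+\operatorname{div}_x f_2$ does not deliver the claimed regularity for the $\partial_t f_1$ piece: any Duhamel-type solution of $\Box w=\partial_t f_1$ satisfies $\partial_t w=f_1+(\text{a term in }\mathcal{C}([0,T];H^{-1}(\Omega)))$, and for $f_1$ merely in $L^2(\mathcal{M})$ neither $\sup_t\|\partial_t w(t)\|_{H^{-1}(\Omega)}$ nor continuity in time is controlled by $\|f_1\|_{L^2(\mathcal{M})}$ (nor are the Cauchy data of this $w$ zero). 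In fact the per-piece bound is false in general, so no term-by-term treatment of the splitting can succeed; the hypotheses that $U$ itself lies in $L^2(\mathcal{M})$ with $L^2$ lateral trace must be exploited jointly with $\Box U\in H^{-1}(\mathcal{M})$. This passage from $L^1(0,T;H^{-1}(\Omega))$ sources (where the argument is indeed classical) to $H^{-1}(\mathcal{M})$ sources, together with the trace theory for $L^2$ solutions, is precisely where \cite{LLTT} and \cite{BFO} do the real work, and your sketch does not close it.
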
 
Observability estimates are one of the key tools in the study of the null controllability problem for the wave equation \cite{MZ}. Although alternative geometric conditions are also available for obtaining such an estimate (see for example \cite{DZZ,Miller}), it is important to note that the geometric control condition is sharp in the sense that it is both necessary and sufficient for obtaining an observability estimate.



\subsection{Continuum null controllability problem} 
\label{continuum}
We recall the classical approach in showing the existence of a control function $U$ that originates from \cite{Lions1988}. Although in general the problem of determining a control function $U$ solving \eqref{pf}$-$\eqref{ctrl} is non-unique, we may look for controls with additional constraints. The standard approach is to choose a control that additionally satisfies the (backward) wave equation as well, that is:
\bel{dual}
\begin{aligned}
\begin{cases}
\Box U=0, 
&\forall (t,x) \in \M,
\\
U(t,x) =0,
&\forall (t,x)\in (0,T) \times \p \Omega,
\\
U(T,x)= U_0, \, \p_t U(T,x)=U_1,
&\forall x \in \Omega
\end{cases}
    \end{aligned}
\ee
for some $(U_0,U_1) \in L^2(\Omega)\times H^{-1}(\Omega)$. 

We recall from \cite[Theorem 2.3]{LLT} that \eqref{dual} has a unique solution $U$ in the energy space
$$ \mathcal C^1(0,T;H^{-1}(\Omega))\cap \mathcal C(0,T;L^2(\Omega)).$$
Observe that given any solution $u$ to equation \eqref{pf}, and any solution $V$ to equation \eqref{dual}, we have:
\bel{}
\begin{aligned}
\int_0^T (\chi_{\omega}(\cdot)&U(\tau,\cdot),V(\tau,\cdot))_{L^2(\Omega)} \,d\tau=(\p_t u(T,\cdot),V(T,\cdot))_{L^2(\Omega)}-(\p_t u(0,\cdot),V(0,\cdot))_{L^2(\Omega)}\\
&-\langle u(T,\cdot),\p_t V(T,\cdot) \rangle_{H^1_0(\Omega)\times H^{-1}(\Omega)}+\langle u(0,\cdot),\p_t V(0,\cdot) \rangle_{H^1_0(\Omega)\times H^{-1}(\Omega)}.
\end{aligned}
\ee
We deduce that equations \eqref{pf}$-$\eqref{ctrl} hold if and only if the following identity holds for any solution $V$ to the wave equation \eqref{dual}: 
\bel{critical}
\int_0^T (\chi_{\omega}(\cdot)U(\tau,\cdot),V(\tau,\cdot))_{L^2(\Omega)} \,d\tau=-(g_1,V(0,\cdot))_{L^2(\Omega)}+\langle g_0,\p_t V(0,\cdot) \rangle_{H^1_0(\Omega)\times H^{-1}(\Omega)}.
\ee
Under the additional assumption that the control function $U$ satisfies the wave equation \eqref{dual}, equation \eqref{critical} is equivalent to the Euler-Lagrange equation for the Lagrangian functional 
\bel{cont-Lagr}
\mathcal{J}(U_0,U_1)= \frac{1}{2}\int_0^T\int_\Omega \chi_\omega |U|^2 \,dx\,dt -(g_1,U(0,\cdot))_{L^2(\Omega)}+\langle g_0,\p_t U(0,\cdot) \rangle_{H^1_0(\Omega)\times H^{-1}(\Omega)},
\ee
where, for each $(U_0,U_1) \in L^2(\Omega)\times H^{-1}(\Omega)$, $U$ denotes the unique solution to equation \eqref{dual} with this final datum. 

To summarize thus far, let $(U_{*,0}, U_{*,1})$ be a minimizer (if it exists) for the functional $\mathcal J$. Then, the solution $U_*$ to \eqref{dual} with this final datum yields a control function that drives the solution $u_*$ of \eqref{pf} with source term $\chi_\omega U_*$ from $(g_0,g_1)$ to $(0,0)$. In fact one can show that $U_*$ is the control function solving \eqref{pf}--\eqref{ctrl} with minimal $\|\sqrt{\chi_\omega}U\|_{L^2(\M)}$ norm.

We will now briefly recall how the observability estimate in Theorem~\ref{observability} proves existence of a unique minimizer for $\mathcal J$. Let us first consider the classical context where $\mathcal O$ satisfies the geometric control condition and additionally that $\chi_\omega$ is simply the characteristic function of the set $\omega$. In this case, Theorem~\ref{observability} implies that the functional $\mathcal J$ is coercive and strictly convex (see for example \cite[Theorem 2.4]{MZ}). Together with the continuity of $\mathcal J$ it follows that, in this setting, there exists a unique minimizer $(U_{*,0},U_{*,1})$ in the space $L^2(\Omega)\times H^{-1}(\Omega)$. 

It is in fact quite common in the literature to let $\chi_\omega$ be the characteristic function of $\omega$ as above. In this case, the control function $U_*$ suffers from low regularity that makes the task of numerical approximation and derivation of convergence rates challenging. Already in the seminal work \cite{BLRII}, a theory for smoother boundary controls for the wave equation were studied. In \cite{EZ,EZ2}, the authors studied interior controls for the wave equation and in particular it was proved that one can construct smoother control functions by simply imposing some smoothness conditions on the initial datum $(g_0,g_1)$ and using a sufficiently smooth cut-off function $\chi_\omega$. Let us recall their approach for the continuum problem. We need the following definition.
\begin{defn}
\label{compat}
For each $s\in\N$, we say that $(y_0,y_1) \in \mathcal{D}((-\Delta)^s)$, if the following conditions are satisfied:
\begin{itemize}
\item[(i)]{$(y_0,y_1) \in H^{s+1}(\Omega)\times H^{s}(\Omega)$,}
\item[(ii)]{$((-\Delta)^j y_0)|_{\p \Omega}=0, \quad \text{for $j=0,1,\ldots,\floor{\frac{s}{2}+\frac{1}{4}}$}$,}
\item[(iii)]{$((-\Delta)^j y_1)|_{\p \Omega}=0,\quad \text{for $j=0,1,\ldots,\floor{\frac{s}{2}-\frac{1}{4}}$}$.}
\end{itemize}
\end{defn}


We now recall \cite[Theorem 4]{EZ} and \cite[Theorem 1.6]{EZ2} to state some regularity results for the controls that are obtained when smoother cut-off functions are used.
\begin{thm}
\label{smoothness}
Let $s \in \N$. Suppose that $\chi_\omega$ is a non-negative smooth function localized in $\omega$ that maps $\mathcal{D}((-\Delta)^s)$ to itself. Assume also that the functional $\mathcal J$ given by \eqref{cont-Lagr} is coercive and strictly convex. Given any initial datum $(g_0,g_1) \in \mathcal{D}((-\Delta)^s)$, let $(U_{*,0},U_{*,1})$ denote the unique minimizer for $\mathcal J$ on $L^2(\Omega)\times H^{-1}(\Omega)$. Then:$$(U_{*,0},U_{*,1}) \in \mathcal D((-\Delta)^{s-1}).$$ 
Moreover, the following estimate holds:
\bel{smoothness_eq} \|U_*\|_{\mathscr{X}_s(\M)}+\|u_*\|_{\mathscr{X}_{s+1}(\M)} \leq C \|(g_0,g_1)\|_{H^{s+1}(\Omega)\times H^s(\Omega)},\ee
where $U_*$ is the unique solution to \eqref{dual} with final datum $(U_{*,0},U_{*,1}) $ and $u_*$ is the unique solution to \eqref{pf} with source $\chi_\omega U_*$. Here, $C>0$ is a constant depending only on $\M$, $\omega$, $\chi_\omega$, $s$ and $\mathscr{X}_s(\M)$ denotes the Banach space 
$\mathscr{X}_s(\M) = \bigcap_{k=0}^{s} \mathcal C^k(0,T;H^{s-k}(\Omega)).$
\end{thm}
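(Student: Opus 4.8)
The plan is to realize the minimizer $(U_{*,0},U_{*,1})$ through the Hilbert Uniqueness Method (Gramian) operator and then to propagate regularity through that operator by commuting it with the generator of the wave group, inducting on $s$; this is the strategy of \cite{EZ,EZ2}. First I would set up the Gramian: for $(U_0,U_1)\in L^2(\Omega)\times H^{-1}(\Omega)$ let $V$ be the solution of \eqref{dual} with final datum $(U_0,U_1)$, let $W$ solve \eqref{pf} with source $\chi_\omega V$ and zero initial data, and set $\Lambda(U_0,U_1):=(\p_t W(T,\cdot),-W(T,\cdot))$. An integration by parts in space--time shows $\pair{\Lambda(U_0,U_1),(\tilde U_0,\tilde U_1)}=\int_0^T(\chi_\omega V,\tilde V)_{L^2(\Omega)}\,dt$, so $\Lambda$ is the bounded symmetric operator attached to the quadratic part of $\mathcal J$ and maps $L^2(\Omega)\times H^{-1}(\Omega)$ into $L^2(\Omega)\times H^1_0(\Omega)$. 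By the identity recalled in Section~\ref{continuum}, the Euler--Lagrange equation for $\mathcal J$ then reads $\Lambda(U_{*,0},U_{*,1})=\Psi(g_0,g_1)$, where $\Psi\colon H^1_0(\Omega)\times L^2(\Omega)\to L^2(\Omega)\times H^1_0(\Omega)$ is a fixed isomorphism built from the wave flow. The assumed coercivity and strict convexity of $\mathcal J$ — precisely the input from Theorem~\ref{observability}, hence from the geometric control condition — say that $\Lambda$ is a self-adjoint isomorphism between these basic energy spaces; in particular $(U_{*,0},U_{*,1})\in L^2(\Omega)\times H^{-1}(\Omega)$, the starting point of the induction.

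The heart of the argument, and what I expect to be the main obstacle, is a commutator identity. Writing $A$ for the generator of the wave group on $H^1_0(\Omega)\times L^2(\Omega)$, whose iterated domains satisfy $\mathcal D(A^m)=\mathcal D((-\Delta)^m)$ in the sense of Definition~\ref{compat}, I would establish, on the natural scale of Sobolev spaces,
\[
\Lambda A = A\Lambda + R,
\]
where $R$ is one order more regularizing than $\Lambda$. The error $R$ arises from the first-order commutator $[\chi_\omega,\Delta]$, a differential operator supported in $\omega$, together with time-endpoint contributions at $t=0,T$ that are themselves lower order because $\chi_\omega$ vanishes near $\p\Omega$. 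Proving this identity rigorously — differentiating the space--time integral defining $\Lambda$, substituting the equations \eqref{pf}--\eqref{dual} with their boundary conditions, and accounting for every boundary and endpoint term — is the delicate point, and it is exactly here that the hypothesis that $\chi_\omega$ maps $\mathcal D((-\Delta)^s)$ into itself is used: it guarantees that $R$ respects the scale of compatibility-condition spaces $\mathcal D(A^m)$, so that the bootstrap closes without loss of regularity.

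Granting the commutator identity, I would induct on $s$. Assume the statement for $s-1$ and take $(g_0,g_1)\in\mathcal D((-\Delta)^s)=\mathcal D(A^s)$. Applying $A^{s-1}$ to $\Lambda(U_{*,0},U_{*,1})=\Psi(g_0,g_1)$ and commuting repeatedly with the displayed identity gives
\[
\Lambda\,A^{s-1}(U_{*,0},U_{*,1}) = A^{s-1}\Psi(g_0,g_1) - \big(\text{terms involving }A^j(U_{*,0},U_{*,1}),\ 0\leq j\leq s-2\big).
\]
The right-hand side lies in $L^2(\Omega)\times H^1_0(\Omega)$: the first term because $(g_0,g_1)\in\mathcal D(A^s)$ and $\Psi$ respects regularity, the remaining terms because $R$ is regularizing and the lower-order derivatives $A^j(U_{*,0},U_{*,1})$, $j\leq s-2$, are controlled by the induction hypothesis. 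Inverting $\Lambda$ via its basic isomorphism property then yields $A^{s-1}(U_{*,0},U_{*,1})\in L^2(\Omega)\times H^{-1}(\Omega)$, i.e.\ $(U_{*,0},U_{*,1})\in\mathcal D(A^{s-1})=\mathcal D((-\Delta)^{s-1})$, together with the bound $\norm{(U_{*,0},U_{*,1})}_{H^s(\Omega)\times H^{s-1}(\Omega)}\leq C\norm{(g_0,g_1)}_{H^{s+1}(\Omega)\times H^s(\Omega)}$.

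Finally I would pass from this endpoint-data regularity to the space--time bound \eqref{smoothness_eq}. Since $U_*$ solves the homogeneous problem \eqref{dual} with final datum in $\mathcal D(A^{s-1})$ and the wave group preserves $\mathcal D(A^{s-1})$, one gets $(U_*,\p_t U_*)\in\bigcap_{k=0}^{s-1}\mathcal C^k(0,T;\mathcal D(A^{s-1-k}))$, which upon converting time derivatives into space derivatives via $\p_t^2 U_*=\Delta U_*$ is exactly $U_*\in\mathscr X_s(\M)$, with the norm bound coming from the previous step. Then $u_*$ solves \eqref{pf} with data $(g_0,g_1)\in\mathcal D(A^s)\subset H^{s+1}(\Omega)\times H^s(\Omega)$ and source $\chi_\omega U_*\in\mathscr X_s(\M)$; since the source is supported away from $\p\Omega$ it automatically meets all boundary compatibility conditions, so the only compatibility conditions needed at $t=0$ are those already carried by $(g_0,g_1)\in\mathcal D(A^s)$. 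Standard regularity for the inhomogeneous wave equation (Duhamel together with iterated energy estimates) then gives $u_*\in\mathscr X_{s+1}(\M)$ with norm controlled by $\norm{(g_0,g_1)}_{H^{s+1}(\Omega)\times H^s(\Omega)}+\norm{\chi_\omega U_*}_{\mathscr X_s(\M)}$, and combining with the bound on $U_*$ produces \eqref{smoothness_eq}.
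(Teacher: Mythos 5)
The paper itself contains no proof of Theorem~\ref{smoothness}: it is quoted directly from \cite[Theorem 4]{EZ} and \cite[Theorem 1.6]{EZ2}, so the only meaningful comparison is with those references. Your sketch is essentially a reconstruction of the Ervedoza--Zuazua argument: represent the minimizer through the HUM Gramian $\Lambda$, invert $\Lambda$ on $L^2(\Omega)\times H^{-1}(\Omega)$ using coercivity (i.e.\ the observability estimate), gain regularity by an approximate commutation of $\Lambda$ with the wave generator $A$ --- which is exactly where the hypothesis that $\chi_\omega$ maps $\mathcal D((-\Delta)^s)$ to itself enters --- and then convert regularity of the final datum into the space--time bound \eqref{smoothness_eq} via invariance of $\mathcal D(A^k)$ under the group and Duhamel for \eqref{pf}. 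That is the right route and the one the paper relies on.

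Two points as written would not survive scrutiny. First, you twice justify harmlessness of boundary/endpoint contributions by saying that $\chi_\omega$, and hence the source $\chi_\omega U_*$, is supported away from $\p\Omega$. In this paper's setting that is false in general: $\omega$ is allowed to touch $\p\Omega$, which is precisely why condition (iii) in \eqref{cutoff_conditions} on normal derivatives is imposed and why the mapping property $\chi_\omega:\mathcal D((-\Delta)^3)\to\mathcal D((-\Delta)^3)$ has to be verified in Appendix~\ref{J_coerciv}. The compatibility of the source at $t=0$ and the control of the commutator/endpoint terms must be deduced from the hypothesis that $\chi_\omega$ preserves $\mathcal D((-\Delta)^s)$, not from support considerations. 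Second, the bootstrap applies $A^{s-1}$ to $(U_{*,0},U_{*,1})$ before its regularity is known; your induction hypothesis only places the minimizer in $\mathcal D((-\Delta)^{s-2})$, one order short of making that step legitimate. To close the argument you need to gain one derivative at a time, showing via the commutator identity that $\Lambda$ is an isomorphism between each pair of successively smoother spaces (so that $\Lambda\xi$ smooth forces $\xi$ smooth), or else regularize the data / use difference quotients and pass to the limit with uniform bounds. Both repairs are standard and are how \cite{EZ2} proceeds, but as stated the step assumes what is to be proved.
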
 

Note that this theorem gives a continuum solution $(u_*,U_*)$ to the null controllability problem \eqref{pf}--\eqref{dual}, with smoothness properties given by \eqref{smoothness_eq}. In this paper, we will need to apply Theorem~\ref{smoothness} with $s=3$. We will therefore begin with defining an admissibility condition for the set $\mathcal O$, based on the geometric control condition, followed by the admissible choices of the cut-off function $\chi_\omega$ so that the assumptions of Theorem~\ref{smoothness} are satisfied. 

\begin{hypothesis}[Admissibility condition for $\mathcal O$]
\label{hypo} There exists $\delta>0$ sufficiently small, such that the set $(0,T)\times \omega_\delta$ satisfies the geometric control condition, where:
$$\omega_\delta=\{x \in \omega\,|\,\dis{(x,\p\omega\setminus\p\Omega)}>\delta\}.$$
\end{hypothesis}

Next, assuming that the set $\mathcal O$ satisfies the admissibility condition above, we require that our cut-off function $\chi_\omega$ satisfies the following
three properties
\begin{equation}\label{cutoff_conditions}
  \begin{cases}
    \text{(i)\,$\chi_\omega \in \mathcal C^{\infty}(\bar{\Omega};[0,\infty))$ and $\chi_\omega=0$ on the set $\Omega \setminus \omega$,}\\
    \text{(ii)\,$\chi_\omega=1$ on the set $\omega_\delta$,}\\ 
    \text{(iii)\,$(\p_\nu^k \chi_\omega)|_{\p\Omega}=0$ for $k=1,2,$ where $\nu$ denotes the unit normal vector to $\p \Omega$.}
  \end{cases}
\end{equation}

We will show in Appendix~\ref{const_cut} that one can always construct such cut-off functions. As an example, we note that in the special case that $\p \omega \cap \p \Omega =\emptyset$, the cut-off function can be chosen as any function $\chi_\omega\in\mathcal C^{\infty}_c(\omega;[0,1])$ that satisfies:
\[
\chi_\omega(x) = \begin{cases}
        0  & \text{if}\, \dis{(x,\p \omega)}<\frac{\delta}{2},\\
             1  & \text{if} \,\,\dis{(x,\p\omega)}>\delta.
     \end{cases}
\]

In Appendix~\ref{J_coerciv} we will show that under the Hypothesis~\ref{hypo} and given any cut-off function satisfying (i)--(iii) above, the two main assumptions of Theorem~\ref{smoothness} are satisfied for $s=3$. Therefore, this theorem applies to solve the null controllability problem \eqref{pf}--\eqref{dual} with the additional smoothness property that $$u_* \in \mathscr{X}_{4}(\M) \quad\text{and}\quad U_* \in \mathscr{X}_3(\M),$$ if the initial datum $(g_0,g_1)$ belongs to the space $\mathcal D((-\Delta)^3)$. This smoothness class for the continuum solution to \eqref{pf}--\eqref{dual} will be important in our numerical analysis. 

Before closing the section, let us emphasize that the results in this paper can also be applied to the problem of (interior) exact controllability, where the final state $(u(T,x),\p_t u(T,x))$ can be any pair of functions $(h_0,h_1) \in \mathcal D((-\Delta)^{3})$. This is a consequence of the equivalence of the null and exact controllability problems for the wave equation. To illustrate this equivalence, let $u_1$ denote the unique solution to equation \eqref{pf} with a homogeneous source term $U=0$, but with the difference that the initial conditions are imposed at the final time $t=T$ that is to say $u_1(T,\cdot)=h_0$ and $\p_t u_1(T,\cdot)=h_1$. This is possible due to the time-reversibility of the wave equation. Subsequently, let $(u_1|_{t=0},\p_t u_1|_{t=0})=(\tilde{g}_0,\tilde{g}_1)$. Finally, let $\chi_\omega U$ be a null control that drives the system from initial data $(g_0-\tilde{g}_0,g_1-\tilde{g}_1)$ to $(0,0)$. It is clear that $\chi_\omega U$ is a control that drives the solution $u$ to equation \eqref{pf} from $(g_0,g_1)$ to $(h_0,h_1)$.



\subsection{Previous literature}

It is well-known that methods based on minimizing discrete analogues
of the Lagrangian \eqref{cont-Lagr} may fail to converge. This is the case, for
example, when second-order central finite differences in both time and
space are used to discretize \eqref{dual}, and the so obtained discretization
of \eqref{cont-Lagr} is minimized by using the conjugate gradient algorithm. This was first observed by Glowinski et al. in a series of works in early 1990s. An excellent summary of these works is provided by Glowinski and Lions in Sections 6.8--9 of \cite{GL}. 
It was observed that trouble lies with the high-frequency components of the discrete solution, see e.g. Section 6.8.6 of \cite{GL}, and different regularization techniques were proposed. For example, a Tikhonov type regularization procedure based on a use of the biharmonic operator is discussed in detail in \cite{GL}, and the efficiency of the regularization is demonstrated by numerical experiments.

The spurious modes arising at high frequencies from a finite-difference semi-discretization of the one dimensional wave equation were first rigorously analyzed in \cite{IZ}. In particular, it was shown that the analogue of the estimate in Theorem \ref{observability} fails on the discrete level.
Several numerical methods based on filtering of the spurious high frequency modes were subsequently proposed. As an early example of a result in this tradition, we mention \cite{M}  where weak convergence of a subsequence of semi-discrete approximations of a control function for the one dimensional wave equation was proven.
For a thorough review of the filtering approach, we refer to the monograph \cite{EZ}. 
There it is also shown that a semi-discrete variant of the approach has optimal convergence under the assumption that the analogue of the estimate in Theorem \ref{observability} is recovered on the discrete level after suitable filtering. 
However, it is not known if such filtered estimates hold in general, when only the geometric control condition is assumed, see the discussion in Section 5.3 of \cite{EZ}.

Instead of considering the control function satisfying (\ref{dual}), it is also possible to follow Russell's {\em stabilization implies control} principle \cite{Russell}. On the continuum level, this involves an alternating iteration solving  forward and backward wave equations. A suitably semi-discretized version of this scheme leads to a solution method to the null control problem with a rate of convergence exhibiting only a logarithmic loss when compared to the optimal rate \cite{CMT}. 
However, the scheme requires that the alternating iteration is stopped after a specific number of steps, depending for example on the constant $C$ in Theorem \ref{observability}, and this stopping criterion may not be easy to implement in practice. 
As demonstrated in Section 1.7.1.2 of \cite{EZ}, the iteration in fact diverges as the number of steps grows too large.

In a recent work \cite{CiMu_control}, M\"unch et al. formulate the controllability problem so that the wave equation (\ref{dual}) enters into the Lagrangian functional (\ref{cont-Lagr}) via a Lagrange multiplier. The Lagrangian functional is further augmented with the $L^2$-norm of $\Box U$.
In a subsequent work \cite{MoMu}, a Lagrange multiplier is used to impose the wave equation as a first order system. The efficiency of the resulting methods is demonstrated by numerical experiments, however, their convergence analysis is not complete as it is not known if the discrete inf-sup constants for the methods, see (39) and (6.9) in \cite{CiMu_control} and in \cite{MoMu}, respectively, are uniformly bounded from below.

Our approach is based on a Lagrangian functional where the initial conditions in \eqref{pf} together with the final conditions in \eqref{ctrl} are imposed via penalty terms, and similarly to \cite{CiMu_control}, the equations \eqref{pf} and \eqref{dual} are imposed via Lagrange multipliers.
Instead of augmentation, we add Tikhonov type regularization terms that vanish at the correct rate as the mesh size tends to zero. This allows us to prove a discrete inf-sup property (Proposition \ref{estimate}), and subsequently an optimal convergence rate (Theorem \ref{t1}). 
The present method can be seen as the continuation of our previous work in \cite{BFO}, where we studied numerical approximation of the dual problem to the controllability problem discussed here, that is, the data assimilation problem subject to the wave equation. A detailed comparison between these two works is given in Section~\ref{conc_remarks}.  

\subsection{Outline of the paper}
We start Section~\ref{discretization_sec} with presenting the first order finite element spaces that are used for the numerical approximation of the null controllability problem. A Lagrangian functional is then formulated in the discrete level, and the main theorem is stated (Theorem~\ref{t1}) that gives a numerical method for solving the null controllability problem. Section~\ref{infsup_sec} is concerned with proving a suitable inf-sup stability estimate (Proposition~\ref{estimate}) for the discrete Lagrangian. We also show the existence of a unique critical point for the discrete Lagrangian. In Section~\ref{weak_sec}, the inf-sup stability estimate is used together with a continuity estimate for the residual error (Lemma~\ref{Aupperbound}) to obtain a weak a priori control on the error function (Proposition~\ref{discretesol}). This proposition is then used to obtain an approximate version of the observability estimate in Theorem~\ref{observability} at the discrete level (Proposition~\ref{discreteobs}). Section~\ref{strong_est_section} is concerned with the proof of Theorem~\ref{t1}. There, the key ingredients are the coercivity Lemma~\ref{positivityf}, together with the approximate discrete observability estimate (Proposition~\ref{discreteobs}). Finally, in Section~\ref{conc_remarks} we give a detailed comparison with our earlier work for the data assimilation problem together along with some concluding remarks.

\section*{Acknowledgment} 
EB was supported by EPSRC grants EP/P01576X/1 and EP/P012434/1, AF by EPSRC grant EP/P01593X/1 and LO by EPSRC grants EP/L026473/1 and EP/P01593X/1. The authors would like to thank the anonymous reviewers whose comments helped to improve and clarify this article.
\section{Discretization}
\label{discretization_sec}
Let us now present the discretization approach for \eqref{pf}--\eqref{dual}. We will use finite differences in time and first order finite elements in space. Let $N \in \mathbb{N}$ and define $\tau=\frac{T}{N}$ to denote the uniform length of the time-steps in our numerical method. Also, let $\{t_k\}_{k=0}^N$ be defined through $t_k=k\tau$. We begin by discretizing the boundary $\pd \Omega$ and denote the resultant polyhedral domain by $\Omega_h$. This polyhedral approximation is assumed to be sufficiently close to $\Omega$ in the sense that
\bel{boundarydisc}
\dis{(x,\pd \Omega)}\leq C\, h^2,\quad \forall x \in \p\Omega_h,
\ee
for some constant $C>0$ that is independent of $h$. This is always possible since $\Omega$ has a smooth boundary (see \cite{BK} for example). Subsequently, we consider a spatial mesh $\mathcal{T}_h$ which is a conforming quasi uniform triangulation of the polyhedral domain $\Omega_h$ and define $h_{K}$ to be the local space mesh size. We set $h=\max_{K \in \mathcal{T}_h} h_{K}$ to be the global mesh parameter in space and make the standing assumption that the discrete time steps $\tau$ and the spatial mesh parameter $h$ are comparable in size, that is to say $\tau=\mathcal{O}(h).$

We now define the spatial finite element space $\mathbb V_h$ to be the space of piece-wise affine continuous finite elements satisfying zero boundary condition,
$$ \mathbb V_h = \{ v \in H^1_0(\Omega_h)\,:\, v|_K \in \mathbb{P}_1(K),\, \forall K \in \mathcal{T}_h\}.$$  
For each $u,v \in \mathbb V_h$, let 
$$(u,v)_h=\int_{\Omega_h} u(x)v(x) \,dx \quad \text{and} \quad a_h(u,v)=\int_{\Omega_h} \nabla u(x) \cdot \nabla v(x) \,dx,$$
and 
$$\|u\|_h=\sqrt{(u,u)_h}.$$ 
Next, we define the space-time mesh $\mathbb V_h^{N+1}=\underbrace{\mathbb V_h \times \mathbb V_h\times\ldots\times \mathbb V_h}_{\text{$N+1$ times}}$ and subsequently for each $$u=(u^0,u^1,\ldots,u^N) \in \mathbb V_h^{N+1},$$ we define the backward and forward discrete time differences $\p_\tau,\tilde{\p}_\tau$ as follows: 

$$
\begin{aligned}
\partial_{\tau} u^n &= \frac{u^n - u^{n-1}}{\tau}\quad \text{for}\quad n=1,\ldots,N, \\
\tilde{\p}_\tau u^n &= \frac{u^n-u^{n+1}}{\tau}\quad \text{for}\quad n=0,\ldots,N-1.
\end{aligned}
$$

\noindent We note in passing that the forward discrete time difference $\tilde{\p}_\tau $ acting on $(u^n)_{n=0}^N$ can be thought as the backward time difference for the discrete function $(u^{N-n})_{n=0}^N$. Note also that second order time differences can be written through
$$
\begin{aligned}
\partial^2_{\tau} u^n &= \frac{u^{n} -2 u^{n-1}+u^{n-2}}{\tau^2}\quad \text{for}\quad n=2,\ldots,N, \\
\tilde{\p}^2_\tau u^n &= \frac{u^n-2u^{n+1}+u^{n+2}}{\tau^2}\quad \text{for}\quad n=0,\ldots,N-2.
\end{aligned}
$$

Finally, consider any smooth extension of $\chi_\omega$ to $\Omega \cup \Omega_h$ and define a non-negative discrete approximate $\chi_h$ of the smooth function $\chi_\omega$ such that $\chi_h \in \mathbb V_h$ and 
\begin{equation}\label{eq:disc_chi}
\|\chi_\omega - \chi_h\|_{L^\infty(\Omega_h)} + h
\|\chi_\omega - \chi_h\|_{W^{1,\infty}(\Omega_h)} \leq C\,h^2,
\end{equation}
where $C>0$ is independent of $h$. Note that this is possible due to the smoothness assumption on $\chi_\omega$ along with smoothness of $\p\Omega$ and equation \eqref{boundarydisc}.

We now return to the null controllability problem \eqref{pf}--\eqref{dual}. Given any $u=(u^0,\ldots,u^N)$, $U=(U^0,\ldots,U^N)$ in $\mathbb V_h^{N+1}$ and $z=(z^2,\ldots,z^N)$, $Z=(Z^0,\ldots,Z^{N-2})$ in $\mathbb V_h^{N-1}$, we define the discrete Lagrangian functional 
$$\mathcal{J}: \mathbb V_h^{N+1}\times \mathbb V_h^{N+1}\times\mathbb V_{h}^{N-1}\times \mathbb V_h^{N-1} \to \mathbb{R}$$ 
through the expression

\begin{equation} \label{Lagrangian}
\begin{aligned}
\mathcal{J}(u,U,z,Z)=&\, \mathcal{J}_{0}(u,U,z,Z)+\mathcal{J}_{1}(U),\\
\mathcal{J}_{0}(u,U,z,Z)=&\, \mathcal G(u,z)-\tau \sum_{n=2}^N (\chi_hU,z)_h+\mathcal G^*(Z,U)+\mathcal R(u),\\
\mathcal{J}_1(U)=&\, \frac{h^2}{2}\|\nabla U^N\|_h^2+\frac{h^2}{2}\|\p_\tau \nabla U^N\|_h^2+\frac{h^2}{2}\|\p_\tau \nabla U^1\|_h^2+\frac{\tau h^2}{2} \sum_{n=1}^N\|\p_\tau\nabla U^n\|_h^2,
\end{aligned}
\end{equation}
\[
\begin{aligned}
\mathcal G(u,z)&= \tau\sum_{n=2}^{N}\left((\p^2_\tau u^n,z^n)_h+a_h(u^n,z^n)\right),\\
\mathcal G^*(Z,U)&=\tau \sum_{n=0}^{N-2}\left((Z^n,\tilde{\p}^2_\tau U^n)_h+a_h(Z^n,U^n)\right),\\
\mathcal R(u)&=\frac{1}{2}\left[\|\nabla u^N\|_h^2+\|\p_\tau u^N\|_h^2+\|\nabla (u^0-g_0)\|_h^2+\|\p_\tau u^1-g_1\|_h^2\right].
\end{aligned}
\bigskip
\]

Let us make some remarks about the discrete Lagrangian $\mathcal J$. Here, the variables $u$ and $U$ should be interpreted as discrete analogoues of the state variable and the control function, while $z$ and $Z$ are discrete variables. The terms $\mathcal G(u,z) - \tau \sum_{n=2}^N (\chi_\omega U, z)_h$ and $\mathcal G^*(Z,U)$ are weak formulations of the first
equations in \eqref{pf} and in \eqref{dual}, respectively. Although in continuum, the forward and backward wave equations are completely equivalent, we use a backward discrete wave equation for the discrete control variable $U$. This will be important in the proof of convergence rates for our numerical analysis (see Lemma~\ref{mixedterm}). The functional $\mathcal R$ imposes the initial conditions in \eqref{pf} as well as the final conditions \eqref{ctrl}. Note that the initial states $z^0$, $z^1$ for $z$ and final states $Z^{N-1}$, $Z^{N}$ for $Z$ do not appear in the formulation and can be taken to be zero. Intuitively, the Lagrange multipliers are solving in-homogeneous wave equations with zero initial or final data. To summarize, $\mathcal J_0$ corresponds to equations \eqref{pf}--\eqref{dual}.

We have incorporated the numerical stabilizers (also called regularizers) in the discrete level through the functional $\mathcal J_1(U)$. The design of these terms is driven with the goal of minimizing the errors in the numerical approximation of the null controllability problem. The first two terms in $\mathcal J_1$ correspond to the energy for the wave equation \eqref{dual} at time $t=T$ and seem a natural inclusion, while the remaining two terms are in part motivated by our previous works for data assimilation problems for heat and wave equations \cite{BFO,BIO}. The regularization term in mixed derivatives also appears in \cite{BE}. 

Heuristically, we expect to have a critical (saddle) point in the sense that the Lagrangian attains the value 
$$ \inf_{(u,U)\in \mathbb V_h^{2N+2}} \sup_{(z,Z)\in \mathbb V_h^{2N-2}} \mathcal J(u,U,z,Z).$$  
Moreover, we expect this critical point to converge to the continuum solution of the control problem \eqref{pf}--\eqref{dual} with the Lagrange multipliers $(z,Z)$ converging to zero as $h \to 0$. 

The Euler-Lagrange equations for the functional $\mathcal J$ can be written in the form 
$$ \langle D_u \mathcal J , v\rangle +\langle D_U \mathcal J , V\rangle+ \langle D_z \mathcal J, w\rangle+\langle D_Z \mathcal J , W\rangle=0\quad \forall (v,V,w,W) \in \mathbb V_h^{4N},$$
where $D_s$ denotes the Fr\'{e}chet derivative with respect to $s \in \{u,U,z,Z\}$. Letting $$x=(u,U,z,Z) \in \mathbb V_h^{4N} \quad\text{and}\quad y=(v,V,w,W) \in \mathbb{V}_h^{4N},$$ we see that the Euler-Lagrange equations can be recast in the form 
\bel{EL}
\mathcal{A}(x;y)=a_h(v^0,g_0)+(\p_\tau v^1,g_1)_h \quad \text{for all} \quad y \in \mathbb V_h^{4N},
\ee
where $\mathcal{A}:\mathbb V_h^{4N} \times \mathbb V_h^{4N} \to \R$ is a bi-linear form defined through 
\[
\mathcal A(x;y)=\mathcal A_0(u,z;v)+\mathcal A_1(U,z,Z;V)+\mathcal A_2(u,U;w)+\mathcal A_3(U;W),
\]
with
\[
\begin{aligned}
\mathcal A_0(u,z;v)=&\,\mathcal G(v,z)+a_h(u^0,v^0)+a_h(u^N,v^N)+(\p_\tau u^1,\p_\tau v^1)_h+(\p_\tau u^N,\p_\tau v^N)_h,\\
\mathcal A_1(U,z,Z;V)=&\,\mathcal G^*(Z,V)-\tau \sum_{n=2}^N(\chi_hV,z)_h+h^2(\nabla U^N,\nabla V^N)_h+h^2(\p_\tau \nabla U^N,\p_\tau \nabla V^N)_h\\
&+h^2(\p_\tau \nabla U^1,\p_\tau \nabla V^1)_h+\tau \sum_{n=1}^N(h\, \p_\tau\nabla U^n,h\,\p_\tau\nabla V^n)_h,\\
\mathcal A_2(u,U;w)=&\,\mathcal G(u,w)-\tau\sum_{n=2}^N(\chi_hU,w)_h,\\
\mathcal A_3(U;W)=&\,\mathcal G^*(W,U).
\end{aligned}
\]

Observe, in particular that the expressions for $\mathcal A_2$ and $\mathcal A_3$ imply that the Euler-Lagrange equations for $u$ and $U$ enforce discrete versions of \eqref{pf} and \eqref{dual}. Indeed, the state variable $u$ must solve the discrete forward wave equation with source term $\chi_h U$, while the control variable $U$ must solve the discrete backward wave equation. We are now ready to state the main theorem in the paper as follows.
\bigskip

\begin{thm}
\label{t1}
Suppose that Hypothesis~\ref{hypo} holds for the set $\mathcal O=(0,T)\times\omega$. Let $\chi_\omega$ be any function that satisfies properties \em{(i)--(iii)} in \eqref{cutoff_conditions}. Let $(g_0,g_1) \in \mathcal D((-\Delta)^{3})$ and denote by $(u_*,U_*)$, the unique continuum solution to the interior null controllability problem \eqref{pf}--\eqref{dual}. Then, there exists $h_0>0$, such that for all $0<h<h_0$, the Euler-Lagrange equation \eqref{EL} admits a unique solution denoted by $(u_h,U_h,z_h,Z_h)$. Moreover, for $n=1,\ldots,N$:
\begin{itemize}
\item[(i)]$\|U_*^n - U^n_h\|_{L^2(\Omega)}+\|\p_t U_*^n -\p_\tau U^n_h\|_{H^{-1}(\Omega)}\leq C\, h \|(g_0,g_1)\|_{H^{4}(\Omega)\times H^3(\Omega)},$
\item[(ii)]$\|u_*^n - u^{n}_h\|_{H^1(\Omega)}+\|\p_t u_*^n-\p_\tau u^n_h\|_{L^{2}(\Omega)} \leq C\,h \|(g_0,g_1)\|_{H^{4}(\Omega)\times H^3(\Omega)},$
\end{itemize}
where $U_*^n(\cdot)=U_*(n\tau,\cdot)$, $u_*^n(\cdot)=u_*(n\tau,\cdot)$ and $C>0$ is a constant independent of the mesh parameter $h$\,\footnote{\,Recall that $h$ and $\tau$ are assumed to be comparable, that is $\tau=\mathcal O(h)$.} and only depends on $T$, $\M$, $\omega$, $\delta$.\footnote{\,See Proposition~\ref{discretesol} for the estimates of $z_h$ and $Z_h$.}
\end{thm}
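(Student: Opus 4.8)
The plan is to follow the standard finite-element-stability paradigm adapted to this indefinite saddle-point setting: first establish that the bilinear form $\mathcal A$ satisfies a discrete inf-sup condition on $\mathbb V_h^{4N}$ with respect to a suitably chosen pair of mesh-dependent norms, which immediately gives existence and uniqueness of the discrete critical point $(u_h,U_h,z_h,Z_h)$ solving \eqref{EL}. This is the content to be proved in Proposition~\ref{estimate}. The inf-sup norm on the "trial" side will combine the natural energy quantities $\|\nabla U^n\|_h$, $\|\partial_\tau U^n\|_h$, $\|\nabla u^n\|_h$ together with the weak regularization seminorms $h\|\partial_\tau\nabla U^n\|_h$ visible in $\mathcal J_1$, plus dual-type control on the residuals of the two discrete wave equations; on the "test" side one allows the corresponding dual norms. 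The key algebraic point is that choosing $y$ as a careful combination of $x$ and of "discrete wave solves" driven by the residuals lets the $\mathcal G$, $\mathcal G^*$ terms in $\mathcal A$ cancel against one another, leaving a positive expression controlled by the regularizer — this is why the backward discretization of the $U$-equation (rather than forward) is essential, as flagged before Lemma~\ref{mixedterm}.

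Second, I would use the inf-sup estimate together with a continuity (consistency) bound on the residual: substituting the \emph{interpolant} of the continuum solution $(u_*,U_*)$ (which lies in $\mathscr X_4(\mathcal M)\times\mathscr X_3(\mathcal M)$ by Theorem~\ref{smoothness} applied with $s=3$, using Hypothesis~\ref{hypo} and \eqref{cutoff_conditions}) into $\mathcal A(\cdot;y)$ and measuring how far the result is from the right-hand side of \eqref{EL}. Standard interpolation estimates for $\mathbb P_1$ elements, the finite-difference truncation error (second order in $\tau$ for the centered second differences, $\tau=\mathcal O(h)$), the boundary-approximation bound \eqref{boundarydisc}, and the approximation \eqref{eq:disc_chi} of $\chi_\omega$ by $\chi_h$ together give that this residual is $\mathcal O(h)$ in the relevant dual norm, times $\|(g_0,g_1)\|_{H^4\times H^3}$. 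Feeding this through the inf-sup bound yields a \emph{weak} a priori control on the error $x_h-\Pi x_*$ (this should be Proposition~\ref{discretesol}), and in particular shows the Lagrange multipliers $z_h,Z_h$ are themselves $\mathcal O(h)$ and the residuals of the two discrete wave equations are small.

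Third — and this is the genuinely nontrivial analytic step — I would upgrade the weak error control to the strong $H^1\times L^2$ (resp. $L^2\times H^{-1}$) estimates in (i)–(ii) by invoking a \emph{discrete} analogue of the observability estimate of Theorem~\ref{observability}. The mechanism: the error in $U_h$ approximately solves a discrete backward wave equation whose source and boundary data are the small residuals already controlled, so if one has a uniform-in-$h$ discrete observability inequality bounding $\sup_n(\|U_h^n\|_{L^2}+\|\partial_\tau U_h^n\|_{H^{-1}})$ by the observation $\|U_h\|_{L^2(\mathcal O_h)}$ plus the residual norms, one closes the estimate for $U$; the bound for $u$ then follows from the discrete forward equation and $\mathcal R$. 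The crux, carried out via the coercivity Lemma~\ref{positivityf} and Proposition~\ref{discreteobs}, is that the weak observation term $\|U_h\|_{L^2(\mathcal O)}$ appearing on the right is absorbed because the coercive part of $\mathcal J$ at the critical point already controls $\tau\sum_n\|\sqrt{\chi_h}U_h^n\|_h^2$ up to $\mathcal O(h)$ consistency error — this is exactly where Hypothesis~\ref{hypo} (GCC for the slightly shrunken set $\omega_\delta$ on which $\chi_h\equiv1$) enters, so that the continuum observability constant $C_0$ is available on a set strictly interior to $\omega$ and the discrete version survives passing through the $\mathbb V_h$ projection with a quantitative defect of size $\mathcal O(h)$.

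The main obstacle I expect is precisely this third step: establishing that the continuum observability estimate transfers to the fully discrete level with the correct (linear, non-degenerating) dependence on $h$. Unlike the semi-discrete situation discussed for \cite{IZ,EZ}, where spurious high-frequency modes destroy uniform observability and force filtering, here the mechanism that saves us is the regularizer $\mathcal J_1$: the terms $\tfrac{h^2}{2}\|\nabla U^N\|_h^2$, $\tfrac{h^2}{2}\|\partial_\tau\nabla U^N\|_h^2$, $\tfrac{h^2}{2}\|\partial_\tau\nabla U^1\|_h^2$ and $\tfrac{\tau h^2}{2}\sum_n\|\partial_\tau\nabla U^n\|_h^2$ damp exactly the high-frequency components that would otherwise spoil the discrete observability, but they are scaled so as to vanish at rate $h$ in energy and hence not pollute the optimal convergence rate. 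Making this quantitative — i.e. proving a discrete observability inequality with an additive $h$-term coming from the regularizer and an $\mathcal O(h)$ consistency term, uniformly over $h<h_0$, using only the continuum GCC — is the heart of the argument and the step most likely to require delicate interpolation and inverse-inequality bookkeeping.
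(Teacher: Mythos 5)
Your first two steps (discrete inf-sup stability of $\mathcal A$ giving existence and uniqueness, then a consistency bound obtained by inserting the interpolant of $(u_*,U_*)$ and feeding the $\mathcal O(h)$ residual through the inf-sup estimate to get the weak a priori bound of Proposition~\ref{discretesol}) coincide with the paper's Proposition~\ref{estimate}, Lemma~\ref{Aupperbound} and Proposition~\ref{discretesol}. The gap is in your third step. You propose to prove a \emph{uniform-in-$h$ discrete observability inequality}, with the regularizer $\mathcal J_1$ acting as a high-frequency damping that restores observability at the discrete level. No such inequality is proved in the paper, and indeed this is exactly the estimate that is not known to hold under the geometric control condition alone (cf.\ the discussion of filtering and Section 5.3 of \cite{EZ} in the introduction); basing the heart of the argument on it leaves the proof open at its most delicate point. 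The paper circumvents the issue entirely: it forms the piecewise-affine-in-time reconstruction $\hat U_h$ of the discrete control, sets $\mathcal E=\hat U_h-U_*$, and applies the \emph{continuum} observability estimate of Theorem~\ref{observability} on $(0,T)\times\omega_\delta$ (where $\chi_\omega\equiv 1$) to this single continuum function. The right-hand side terms --- the $H^{-1}$ residual $R$, the boundary trace of $\mathcal E$, and the interior observation --- are then bounded by $\mathcal O(h)$ consistency arguments that use the weak a priori bounds (Corollary~\ref{discretesol2}) and the fact that $U_h$ solves the discrete dual equation. In this scheme the role of $\mathcal J_1$ is to supply those $h$-scaled a priori bounds needed for consistency, not to repair a discrete observability estimate.

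There is a second concrete flaw in the closing step: you claim the observation term is absorbed because ``the coercive part of $\mathcal J$ at the critical point already controls $\tau\sum_n\|\sqrt{\chi_h}U_h^n\|_h^2$.'' The discrete Lagrangian \eqref{Lagrangian} contains no quadratic term in $\sqrt{\chi_h}\,U$ (unlike the continuum functional \eqref{cont-Lagr}, the term $\chi_h U$ enters only linearly against $z$), and the quantity that must be shown to be $\mathcal O(h)$ is the observation of the \emph{error} $\tilde U_h=U_h-\pi_h U_*$, not of $U_h$ itself, whose observation is $\mathcal O(1)$. In the paper this positivity is genuinely hidden and must be manufactured: in Lemma~\ref{positivityf} the multiplier test function is taken as $\hat w=-z_h-\gamma\tilde U_h+\alpha h^2 w$ (see \eqref{testfcns1}), so that $\mathcal A_2$ produces $\tau\gamma\sum_{n=2}^N\|\sqrt{\chi_h}\tilde U_h^n\|_h^2$ at the cost of the cross term $\gamma\,\mathcal G(\tilde u_h,\tilde U_h)$, which Lemma~\ref{mixedterm} controls via the discrete Leibniz rule and the identity $\mathcal G^*(\cdot\,,U_h)=0$ --- this is precisely where the backward discretization of the dual equation is used --- together with Proposition~\ref{discreteobs} itself; the conclusion is Proposition~\ref{coercivityf}, and only then do parts (i)--(ii) of the theorem follow by combining it with Proposition~\ref{discreteobs} and Lemma~\ref{wellposed}. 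As written, your absorption step has no source of positivity for the error's observation term, so the argument does not close without this additional device.
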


\section{Inf-sup stability estimate}
\label{infsup_sec}
This section is concerned with the study of the Euler-Lagrange
equation \eqref{EL}. 
First, we define $\kappa,\tilde{\kappa}>0$ to be constants such that for all $u \in \mathbb V_h$
\bel{reversepoincare}
\max{\{h,\tau\}} \|\nabla u\|_h \leq \kappa\|u\|_h\quad \text{and} \quad \tilde{\kappa}\|u\|_h\leq \|\nabla u\|_h,
\ee
where we recall that $h,\tau$ are the mesh parameters. The existence of these constants is guaranteed by the discrete inverse
inequality that follows from the fact that the space-mesh is quasi-uniform (see for instance \cite[Lemma 4.5.3]{BS08}) together with the Poincar\'{e} inequality and the standing assumption that $\tau=\mathcal O(h)$.

We introduce the following discrete norms and semi-norms:
\[
\begin{split}
\tnorm (u,U) \tnorm_R^2 =&\,\|\nabla u^{N}\|_h^2+\|\p_\tau u^N\|_h^2+\|\nabla u^0\|_h^2+\|\p_\tau u^1\|_h^2+\tau\sum_{n=1}^N\|h\, \p_\tau \nabla U^n\|_h^2\\
&+h^2\|\nabla \p_\tau U^N\|_h^2+h^2\|\p_\tau \nabla U^1\|_h^2+h^2\|\nabla U^N\|_h^2,\\ 
\tnorm u \tnorm_F^2 =&\, \tau \sum_{n=1}^N(\|\partial_\tau u^n\|_h^2 + \|\nabla u^n\|_h^2),\\
\tnorm U \tnorm_{F'}^2 =&\, \tau \sum_{n=2}^N(\|\partial_{\tau}^2 U^n\|_h^2+\|\p_\tau\nabla U^n\|^2_h+\|\partial_\tau U^n\|_h^2 + \|\nabla U^n\|_h^2)+\|\p_\tau U^1\|_h^2+\|\nabla U^1\|_h^2,\\
\tnorm z \tnorm_{D}^2 =&\, \tau\sum_{n=2}^N \|z^n\|_h^2 +\tau\sum_{n=2}^N \|\nabla \mathcal{I}{z}^n\|_h^2+ \|\nabla \mathcal{I}{z}^N\|_h^2+\|z^N\|_h^2\\
\tnorm Z \tnorm_{D'}^2=&\,\tau\sum_{n=0}^{N-2} \|Z^n\|_h^2 +\tau\sum_{n=0}^{N-2} \|\nabla \tilde{\mathcal{I}}{Z}^n\|_h^2+ \|\nabla \tilde{\mathcal{I}}{Z}^0\|_h^2+\|Z^0\|_h^2\\
\tnorm (u,U,z,Z) \tnorm_C^2=&\,\tnorm (u,U) \tnorm_R^2+\tau\sum_{n=2}^N\|z^n\|_h^2+\tau \sum_{n=0}^{N-2}\|Z^n\|_h^2.
\end{split}
\]
Here, 
$$\mathcal{I}{z}^n = \tau \sum_{m=0}^n (1+m\tau) z^m\quad \text{and}\quad \tilde{\mathcal{I}}{Z}^n= \tau \sum_{m=n}^N (1+(N-m)\tau) Z^m$$ 
where we have defined $z^0=z^1=0$ and $Z^{N-1}=Z^N=0$. Note that using the Poincar\'{e} inequality we have the following:
$$ \|\nabla \mathcal{I}{z}^n\|_h \geq C\, \|\mathcal{I}{z}^n\|_h \hspace{5mm} n=2,...,N,$$
for some $C>0$ independent of $h$, with an analogous estimate holding for $\tilde{\mathcal{I}}Z$ as well. 

The above norms and semi-norms have the following interpretations. The $\tnorm (\cdot,\cdot) \tnorm_R$ semi-norm captures the stability properties of the bi-linear form $\mathcal A$ due to the
regularization terms in $\mathcal J_1$ and the data fitting terms in $\mathcal R$. The norms $\tnorm\cdot\tnorm_F$, $\tnorm\cdot\tnorm_{F'}$, $\tnorm\cdot\tnorm_{D}$ and $\tnorm\cdot\tnorm_{D'}$ quantify stability
properties of the discrete wave equations for $u$, $U$, $z$ and $Z$ given by
$\mathcal G(u,z)$ and $\mathcal G^*(Z,U)$. There is a delicate counter balance in the strength of the norms $\tnorm\cdot\tnorm_{F}, \tnorm\cdot\tnorm_{F'}$ (in terms of the Sobolev scales) for the functions $u, U$ compared to that of $\tnorm\cdot\tnorm_{D}, \tnorm\cdot\tnorm_{D'}$ for the Lagrange multipliers $z, Z$. For instance, in the continuum limit $\tau,h \to 0$, the $\tnorm\cdot\tnorm_{F}$ norm is reminiscent to $\|\cdot\|_{H^1(\M)}$, while the $\tnorm\cdot\tnorm_D$ norm is analogous to $\|\cdot\|_{H^{-1}(0,T;H^1(\Omega))}+\|\cdot\|_{L^2(\M)}$. The $\tnorm(\cdot,\cdot)\tnorm_{C}$ semi-norm quantifies continuity of $\mathcal A$ in the dual
variables from above and below, see Proposition~\ref{estimate} below and Lemma~\ref{Aupperbound}.

The rest of this section is concerned with the following proposition.

\begin{prop}
\label{estimate}
There exists $h_0,C>0$ such that for all $h\in(0,h_0)$ and all $x=(u,U,z,Z) \in \mathbb V_h^{4N}$ there exists $y=(v,V,w,W) \in \mathbb V_h^{4N}$ satisfying:
$$ \mathcal A(x;y)\gtrsim\tnorm(u,U)\tnorm_R^2 +h^2\tnorm u \tnorm_{F}^2 + h^2\tnorm U \tnorm_{F'}^2+\tnorm z \tnorm_{D}^2+\tnorm Z \tnorm_{D'}^2 \gtrsim \tnorm y\tnorm_C^2.$$
\end{prop}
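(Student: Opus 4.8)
The plan is to construct the test function $y=(v,V,w,W)$ as a sum of several carefully chosen pieces, one group addressing each block of the norm on the right-hand side, and then to show that the cross-terms between these pieces can be absorbed. The natural first choice is the \emph{self-adjoint} part: taking $(v,V)=(u,U)$ produces exactly $\mathcal R$-type and $\mathcal J_1$-type quantities, i.e. the full $\tnorm(u,U)\tnorm_R^2$, up to the terms coming from $\mathcal G(u,z)$ and $\mathcal G^*(Z,U)$ which will need to be handled separately. Similarly, taking $(w,W)=(z,Z)$ feeds $\mathcal G(u,z)$ and $\mathcal G^*(Z,U)$ back with the opposite sign, so that in the combination $\mathcal A(x;(u,U,z,Z))$ the wave-equation bilinear terms cancel and one is left with a clean nonnegative quantity controlling $\tnorm(u,U)\tnorm_R^2$ plus the $\chi_h$ coupling term $-2\tau\sum_{n=2}^N(\chi_h U,z)_h$, which is lower order and absorbed by Cauchy--Schwarz against $h^2\tnorm U\tnorm_{F'}^2$ and $\tnorm z\tnorm_D^2$ once those are present with a definite sign.

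To generate $\tnorm z\tnorm_D^2$ and $\tnorm Z\tnorm_{D'}^2$, I would add to $v$ a discrete solution of a forward wave equation driven by $z$ (with zero initial data), so that $\mathcal G(v,z)$ reproduces an energy-type quantity in $z$; the weighted summation operator $\mathcal I$ and the weights $(1+m\tau)$ in the definition of $\tnorm\cdot\tnorm_D$ are precisely the multiplier-method weights (a discrete analogue of multiplying the wave equation by $(1+t)z$ and integrating, an Aubin--Nitsche / Morawetz-type trick) needed to get $H^{-1}$-in-time control of $z$ from an $L^2$-in-time test function. Symmetrically, $V$ gets an extra piece that is a discrete \emph{backward} wave solution driven by $Z$, producing $\tnorm Z\tnorm_{D'}^2$; the asymmetry in the scheme (backward discrete wave equation for $U$) is what makes this compatible. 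To obtain $h^2\tnorm u\tnorm_F^2$ and $h^2\tnorm U\tnorm_{F'}^2$, I would add small multiples ($O(h^2)$) of the discrete state variables themselves into the multiplier slots $w$ and $W$ — i.e. $w \mathrel{+}= c h^2 u$ (suitably reindexed) and $W \mathrel{+}= c h^2 U$ — so that $\mathcal A_2,\mathcal A_3$ contribute $\mathcal G(u,h^2 u)$, $\mathcal G^*(h^2 U,U)$; a discrete integration by parts in time plus the inverse inequality \eqref{reversepoincare} converts these into $h^2\tnorm u\tnorm_F^2$, $h^2\tnorm U\tnorm_{F'}^2$ modulo boundary-in-time terms already controlled by $\tnorm(u,U)\tnorm_R^2$ and modulo the $\chi_h$ coupling.

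The second inequality, $\cdots \gtrsim \tnorm y\tnorm_C^2$, is then essentially a bookkeeping step: since $y$ is built explicitly from $(u,U,z,Z)$ by bounded (in the appropriate discrete norms) solution operators, one estimates $\tnorm(v,V)\tnorm_R$ and $\tau\sum\|w^n\|_h^2+\tau\sum\|W^n\|_h^2$ in terms of the five quantities on the left, using the discrete energy estimates for the wave solves and the inverse inequality to handle the $h^2$ factors. I expect the \textbf{main obstacle} to be the simultaneous balancing of constants: each added piece of $y$ contributes a desired positive term but also cross-terms (wave-multiplier interactions, the $\chi_h$ coupling, and the time-boundary terms from discrete integration by parts), and one must choose the various small parameters — the $O(h^2)$ coefficients in front of the Tikhonov-matching pieces, and the relative weights of the multiplier pieces — in a fixed order so that every cross-term is absorbed by a previously secured positive term with room to spare, while keeping $h<h_0$ small enough that the inverse-inequality constants $\kappa$ do not overwhelm the gain. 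The weighted discrete Hardy/Poincaré-type estimate needed to control $\|\mathcal I z^n\|_h$ by $\|\nabla \mathcal I z^n\|_h$ uniformly in $n$ and $h$, together with the discrete commutator between $\p^2_\tau$ and the weighted sum $\mathcal I$, is the technical heart of the $z,Z$ part and is where I would expect to spend the most care.
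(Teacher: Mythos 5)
Your overall architecture --- start from a test function that reproduces $\tnorm(u,U)\tnorm_R^2$ exactly, then add multiplier-type corrections to $(v,V,w,W)$, each securing one block of the target norm, and fix the small parameters in a definite order so that every cross-term (including the $\chi_h$ couplings) is absorbed by previously secured positive terms, using \eqref{reversepoincare} to handle the $h$-factors --- is exactly the paper's strategy. However, two of your concrete choices do not work as described. First, the baseline has a sign error: with $(v,V,w,W)=(u,U,z,Z)$ the terms $\mathcal G(u,z)$ and $\mathcal G^*(Z,U)$ enter $\mathcal A_0,\mathcal A_2$ (resp. $\mathcal A_1,\mathcal A_3$) with the \emph{same} sign, so they double rather than cancel; the cancellation requires $(w,W)=(-z,-Z)$ (the paper's Lemma~\ref{algebra}), and then the $\chi_h$ couplings of the baseline cancel as well, so no term $-2\tau\sum_n(\chi_h U,z)_h$ remains at this stage. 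This matters beyond bookkeeping: a leftover $O(1)$ coupling $\tau\sum_n(\chi_h U^n,z^n)_h$ cannot be absorbed by $h^2\tnorm U\tnorm_{F'}^2$ and $\tnorm z\tnorm_D^2$ (Cauchy--Schwarz would force a weight $h^{-2}$ on the $z$ side), and the doubled $\mathcal G(u,z)$ involves $\p^2_\tau u$ and $\|\nabla z^n\|_h$, which appear in none of the norms on the right; so with your sign the argument breaks down rather than merely losing a constant.

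The more serious gap is the choice $w\mapsto w+c h^2 u$, $W\mapsto W+ch^2U$ to generate $h^2\tnorm u\tnorm_F^2$ and $h^2\tnorm U\tnorm_{F'}^2$. Testing the discrete wave operator against the solution itself gives, after summation by parts in time, the gradient bulk term with a good sign but the kinetic bulk term with the \emph{wrong} sign: schematically $\mathcal G(u,u)\approx\tau\sum_n\|\nabla u^n\|_h^2-\tau\sum_n\|\p_\tau u^n\|_h^2+(\p_\tau u^N,u^N)_h-(\p_\tau u^1,u^1)_h$, and the negative term $-ch^2\tau\sum_n\|\p_\tau u^n\|_h^2$ is controlled neither by $\tnorm(u,U)\tnorm_R^2$ (which contains only the endpoint quantities $\p_\tau u^1,\p_\tau u^N$) nor by anything else generated so far; moreover $\mathcal G^*(h^2U,U)$ produces no terms of the type $\|\p^2_\tau U^n\|_h^2$ or $\|\p_\tau\nabla U^n\|_h^2$, both of which belong to $\tnorm\cdot\tnorm_{F'}$. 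The multipliers must be weighted discrete \emph{time derivatives} of the state: the paper takes $w^n=(2T-n\tau)\p_\tau u^n$ (scaled by $\alpha h^2$) and $W^n=\tilde\p^2_\tau U^n+(2T-(N-n)\tau)\tilde\p_\tau U^n$ (scaled by $\gamma h^2$), so that the discrete analogue of $\int(2T-t)\,\p_t(\mathrm{energy})\,dt$ yields the full positive bulk energy with boundary terms of controllable sign (Lemma~\ref{energyuUzZ}, quoted from \cite{BFO}). Relatedly, for the $z,Z$ blocks the paper does not solve a discrete wave equation driven by $z$: it uses the explicit weighted summations $v^n=\mathcal I z^n$, $V^n=\tilde{\mathcal I}Z^n$, whose pairings $\mathcal G(v,z)$, $\mathcal G^*(Z,V)$ give $\tnorm z\tnorm_D^2$, $\tnorm Z\tnorm_{D'}^2$ directly; a wave solve with source $z$ would at best control $\tau\sum_n\|z^n\|_h^2$, not the $\nabla\mathcal I z$ portions of $\tnorm\cdot\tnorm_D$, and its own stability estimates would then be needed for the second inequality $\gtrsim\tnorm y\tnorm_C^2$.
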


\begin{rmk}
Throughout the remainder of the paper we use the notation $A \gtrsim B$, to imply the existence of a positive constant $C>0$ independent of the mesh parameter $h$, such that $A\geq C B$.
\end{rmk}

We can use Proposition~\ref{estimate} to show that the Euler-Lagrange equation \eqref{EL} admits a unique solution $x_h=(u_h,U_h,z_h,Z_h) \in \mathbb V_h^{4N}$. Indeed, let $N_h$ denote the dimension of $\mathbb V_h$. Equation \eqref{EL} is a linear system governed by a $4N_hN \times 4N_hN$ matrix. Existence and uniqueness of a discrete solution $x_h$ will follow, if we can show that the kernel of this matrix is trivial. But this follows immediately from Proposition~\ref{estimate}.  

Before presenting the proof of Proposition~\ref{estimate}, we state a
few lemmas, the first of which is trivial. 
\begin{lem}
\label{algebra}
Let $x=(u,U,z,Z)\in \mathbb V_h^{4N}$. If $y=(u,U,-z,-Z)$, then 
$$\mathcal{A}(x;y) =\tnorm(u,U)\tnorm_R^2.$$
\end{lem}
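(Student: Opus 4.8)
\textbf{Proof plan for Lemma~\ref{algebra}.}

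The plan is to simply substitute $y=(v,V,w,W)=(u,U,-z,-Z)$ directly into the decomposition $\mathcal A(x;y)=\mathcal A_0(u,z;v)+\mathcal A_1(U,z,Z;V)+\mathcal A_2(u,U;w)+\mathcal A_3(U;W)$ and collect terms. The key observation is that the terms involving the discrete wave operators $\mathcal G$ and $\mathcal G^*$, as well as the off-diagonal $\chi_h$-coupling terms, will cancel in pairs, leaving only the manifestly nonnegative ``diagonal'' contributions that constitute $\tnorm(u,U)\tnorm_R^2$.

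Concretely, I would proceed as follows. First, in $\mathcal A_0(u,z;v)$ with $v=u$, note that $\mathcal G(v,z)=\mathcal G(u,z)$, and the remaining terms $a_h(u^0,u^0)+a_h(u^N,u^N)+(\p_\tau u^1,\p_\tau u^1)_h+(\p_\tau u^N,\p_\tau u^N)_h = \|\nabla u^0\|_h^2+\|\nabla u^N\|_h^2+\|\p_\tau u^1\|_h^2+\|\p_\tau u^N\|_h^2$ are exactly the $u$-part of $\tnorm(u,U)\tnorm_R^2$. Second, in $\mathcal A_1(U,z,Z;V)$ with $V=U$, the regularizer terms become $h^2\|\nabla U^N\|_h^2+h^2\|\p_\tau\nabla U^N\|_h^2+h^2\|\p_\tau\nabla U^1\|_h^2+\tau\sum_{n=1}^N\|h\,\p_\tau\nabla U^n\|_h^2$, which is precisely the $U$-part of $\tnorm(u,U)\tnorm_R^2$; there remain the terms $\mathcal G^*(Z,U)$ and $-\tau\sum_{n=2}^N(\chi_hU,z)_h$. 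Third, in $\mathcal A_2(u,U;w)$ with $w=-z$, bilinearity gives $-\mathcal G(u,z)+\tau\sum_{n=2}^N(\chi_hU,z)_h$, which cancels the leftover $\mathcal G(u,z)$ from $\mathcal A_0$ and the leftover $-\tau\sum_{n=2}^N(\chi_hU,z)_h$ from $\mathcal A_1$. Fourth, in $\mathcal A_3(U;W)$ with $W=-Z$, bilinearity gives $-\mathcal G^*(Z,U)$, cancelling the leftover $\mathcal G^*(Z,U)$ from $\mathcal A_1$. Adding everything up, all Lagrange-multiplier couplings vanish and what survives is exactly $\tnorm(u,U)\tnorm_R^2$.

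There is essentially no obstacle here; the only point requiring a moment's care is bookkeeping the signs and making sure that $\mathcal G$, $\mathcal G^*$ and the $\chi_h$-term are linear (not merely bilinear) in the argument being negated, so that replacing $z\mapsto -z$ and $Z\mapsto -Z$ simply flips the sign of each such term — which is immediate from their defining formulas. This is why the lemma is labelled trivial in the text.
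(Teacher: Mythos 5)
Your proposal is correct and is exactly the computation the paper has in mind (the paper omits the proof, calling the lemma trivial): substituting $y=(u,U,-z,-Z)$ makes $\mathcal G(u,z)$, $\mathcal G^*(Z,U)$ and the $\chi_h$-coupling terms cancel in pairs by linearity, leaving precisely the terms of $\tnorm(u,U)\tnorm_R^2$. No issues.
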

The estimates in the next lemma are discrete analogues of energy estimates for the wave equation corresponding to various Sobolev norms. The energy estimates for $u$ and $U$ will be stronger in the Sobolev scale but eventually rescaled by $h^2$ and this will be balanced by weaker Sobolev spaces with no scaling on the dual variables $z,Z$. For the proof, we refer the reader to \cite[Remark 1, Lemma 3.4, Lemma 3.5]{BFO}.
\begin{lem}
\label{energyuUzZ}
Let $(u,U,z,Z) \in \mathbb V_h^{4N}$. We define $z^0=z^1=Z^{N-1}=Z^N=0$. Define the test function $y=(v,V,w,W)$ through
\[
\begin{aligned}
v^n&= \mathcal I z^n \quad \text{for}\quad n=0,\ldots,N,\\
V^n&=\tilde{\mathcal{I}} Z^n \quad \text{for}\quad n=0,\ldots,N,\\
w^n&=(2T-n\tau)\p_\tau u^n\quad \text{for}\quad n=2,\ldots,N,\\
W^n&=\tilde{\p}^2_\tau U^n+(2T-(N-n)\tau)\tilde{\p}_\tau U^n\quad \text{for}\quad n=0,\ldots,N-2.
\end{aligned}
\]
The following estimates hold:
\bel{energyest}
\begin{aligned}
&\mathcal G(u,w) \gtrsim \tnorm u \tnorm_{F}^2-\tnorm(u,0)\tnorm_R^2,\quad &\mathcal G(v,z)&\gtrsim \tnorm z \tnorm_{D}^2,\\
&\mathcal G^*(h^2W,U) \gtrsim h^2\tnorm U \tnorm_{F'}^2-\tnorm(0,U)\tnorm_R^2,\quad &\mathcal G^*(Z,V)&\gtrsim \tnorm Z \tnorm_{D'}^2,
\end{aligned}
\ee
where the constants in the inequalities only depend on $T,\Omega$.
\end{lem}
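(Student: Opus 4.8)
The plan is to prove all four inequalities by the discrete energy (multiplier) method, reducing each one to a discrete summation-by-parts identity in time together with telescoping of the spatial Dirichlet form $a_h$. The single algebraic identity underlying everything is that for any sequence $a^n \in \mathbb V_h$ one has $(\p_\tau a^n, a^n)_h = \frac{1}{2\tau}(\|a^n\|_h^2 - \|a^{n-1}\|_h^2) + \frac{\tau}{2}\|\p_\tau a^n\|_h^2$, and likewise $a_h(u^n, \p_\tau u^n) = \frac{1}{2\tau}(\|\nabla u^n\|_h^2 - \|\nabla u^{n-1}\|_h^2) + \frac{\tau}{2}\|\p_\tau \nabla u^n\|_h^2$, after writing $\p^2_\tau u^n = \p_\tau(\p_\tau u^n)$. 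The two families of estimates then differ only in the multiplier used: the bounds on $\mathcal G(u,w)$ and $\mathcal G^*(h^2W,U)$ use \emph{differentiating} multipliers (discrete time derivatives of the tested function), whereas the bounds on $\mathcal G(v,z)$ and $\mathcal G^*(Z,V)$ use the \emph{integrating} multipliers $v=\mathcal I z$ and $V=\tilde{\mathcal I}Z$.

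For estimate (1) I would take the weight $(2T-n\tau) \in [T,2T]$ and apply the identity above with $a^n = \p_\tau u^n$ and with $a_h(u^n,\p_\tau u^n)$. A second Abel summation against the weight, whose discrete derivative is $\p_\tau(2T-n\tau)=-1$, converts the telescoped differences into the positive bulk term $\tau\sum_n(\|\p_\tau u^n\|_h^2 + \|\nabla u^n\|_h^2) = \tnorm u\tnorm_F^2$ plus boundary energies at $n=0,N$; the latter are exactly the four quantities making up $\tnorm(u,0)\tnorm_R^2$, which explains the subtracted term. Estimate (3) is the higher-order, time-reversed analogue and needs the composite multiplier $W^n = \tilde{\p}^2_\tau U^n + (2T-(N-n)\tau)\tilde{\p}_\tau U^n$: the second summand reproduces the argument for (1) verbatim and yields the first-order part $\tau\sum(\|\p_\tau U^n\|_h^2 + \|\nabla U^n\|_h^2)$ of $\tnorm U\tnorm_{F'}^2$, while the first summand upgrades it, since pairing $\tilde{\p}^2_\tau U^n$ in $\mathcal G^*$ produces $\|\p^2_\tau U^n\|_h^2$ directly and $a_h(\tilde{\p}^2_\tau U^n, U^n)$ telescopes into $\tau\sum\|\p_\tau\nabla U^n\|_h^2$. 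Multiplying the whole identity by $h^2$ matches the scaling on the left, and the leftover boundary contributions at $n=0,N-2,N$ assemble precisely into $\tnorm(0,U)\tnorm_R^2$.

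For estimates (2) and (4) I would exploit the exact discrete relation $\p_\tau v^n = (1+n\tau)z^n$, which follows immediately from $v^n = \mathcal I z^n = \tau\sum_{m=0}^n(1+m\tau)z^m$ (and its mirror image for $V=\tilde{\mathcal I}Z$). Using the discrete Leibniz rule one finds $\p^2_\tau v^n = z^n + (1+(n-1)\tau)\p_\tau z^n$, so $(\p^2_\tau v^n, z^n)_h$ contributes the bulk term $\|z^n\|_h^2$ directly plus a weighted telescoping term in $\|z^n\|_h^2$; similarly $a_h(v^n,z^n) = (1+n\tau)^{-1}a_h(v^n,\p_\tau v^n)$ telescopes into $\tau\sum\|\nabla\mathcal I z^n\|_h^2$ with the weight $(1+n\tau)^{-1}\in[(1+T)^{-1},1]$ bounded below. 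Crucially, the convention $z^0=z^1=0$ annihilates every initial boundary term, so that the only surviving boundary contributions (at $n=N$, namely $\|z^N\|_h^2$ and $\|\nabla\mathcal I z^N\|_h^2$) carry a favorable sign. Summing gives exactly $\tnorm z\tnorm_D^2$, and the backward computation gives $\tnorm Z\tnorm_{D'}^2$.

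The main obstacle is controlling the $O(\tau)$ discrete correction terms — the remainders $\frac{\tau}{2}\|\p_\tau a^n\|_h^2$ and the commutator defects produced because the difference operators do not satisfy an exact product rule. In estimates (1), (2), (4) these remainders are nonnegative and may simply be discarded. In the higher-order estimate (3), however, one must check that the cross terms between the two pieces of $W^n$ and the commutator between $\tilde{\p}^2_\tau$ and $a_h$ are either of the correct sign or absorbable into the $h^2$-scaled bulk norm via the inverse inequality \eqref{reversepoincare}; verifying this, and that no boundary term survives beyond those collected in $\tnorm(0,U)\tnorm_R^2$, is the delicate part, and it is exactly here that the choice of the \emph{backward} discretization for $U$ is used.
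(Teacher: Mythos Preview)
Your approach via the discrete multiplier method is correct in spirit and matches what the paper does: the paper itself gives no proof here and simply cites \cite[Remark~1, Lemma~3.4, Lemma~3.5]{BFO}, where precisely the summation-by-parts argument you outline is carried out. Estimates (1), (2), (4) are handled exactly as you describe.

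There is, however, a sign error in your treatment of estimate (3). You write that ``$a_h(\tilde\partial^2_\tau U^n,U^n)$ telescopes into $\tau\sum\|\partial_\tau\nabla U^n\|_h^2$'' and that this ``upgrades'' the first-order energy. In fact the opposite happens: one step of summation by parts in $n$ gives
\[
\tau\sum_{n=0}^{N-2} a_h(\tilde\partial^2_\tau U^n,U^n)
= -\tau\sum_{n} a_h(\partial_\tau U^{n+1},\partial_\tau U^n)\; +\;\text{boundary terms},
\]
and the bulk term on the right is $-\tfrac12\tau\sum(\|\partial_\tau\nabla U^{n+1}\|_h^2+\|\partial_\tau\nabla U^n\|_h^2)$ up to an $O(\tau^2)$ remainder, i.e.\ it carries a \emph{negative} sign. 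So the mixed-derivative piece of $\tnorm U\tnorm_{F'}^2$ is not produced with a favorable sign by the multiplier identity.

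The estimate is nonetheless true, and the repair is structural rather than computational: the quantity $h^2\tau\sum_{n=1}^N\|\partial_\tau\nabla U^n\|_h^2$ is itself one of the summands in $\tnorm(0,U)\tnorm_R^2$. Hence the unwanted negative bulk term above, once multiplied by $h^2$, is absorbed directly into $-C\tnorm(0,U)\tnorm_R^2$ on the right-hand side, and the \emph{positive} appearance of $h^2\tau\sum\|\partial_\tau\nabla U^n\|_h^2$ inside $h^2\tnorm U\tnorm_{F'}^2$ is obtained for free by the trivial inequality $h^2\tau\sum\|\partial_\tau\nabla U^n\|_h^2 - \tnorm(0,U)\tnorm_R^2\leq 0$. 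This is also why $\tnorm(0,U)\tnorm_R$ is defined to contain that bulk term in the first place, and why the inverse inequality \eqref{reversepoincare} is not actually needed at this step. Your final paragraph correctly anticipates that the signs in (3) require care; the specific mechanism is the one just described.
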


We are now ready to prove Proposition~\ref{estimate}.

\begin{proof}[Proof of Proposition~\ref{estimate}]
Let $x=(u,U,z,Z)$ and define $y=(\hat{v},\hat{V},\hat{w},\hat{W}) \in \mathbb{V}^{4N}$ through
\bel{testfcns}
\begin{aligned}
\hat{v}^n = u^n+\gamma v^n,   \quad  &\hat{V}^n=U^n+\alpha V^n,\\
\hat{w}^n = -z^n+\alpha h^2 w^n,  \quad     &\hat{W}^n=-Z^n+\gamma h^2 W^n,
\end{aligned}
\ee
where $\gamma>\alpha>0$ and $v,V,w,W$ are chosen as in Lemma~\ref{energyuUzZ}. Recalling the definition of the linear form $\mathcal A(x;y)$ together with Lemma~\ref{algebra}, we write
\[
\mathcal A(x;y)=\tnorm(u,U)\tnorm_R^2+\gamma \mathcal A_0(u,z;v)+\alpha \mathcal A_1(U,z,Z;V)+\alpha h^2 \mathcal A_2(u,U;w)+\gamma h^2 \mathcal A_3(U;W).
\]
By Lemma~\ref{energyuUzZ}, there exists $C_1,C_2>0$ only depending on $T,\Omega$ such that
\bel{energycomb}
\begin{aligned}
&\alpha h^2\mathcal G(u,w)+\gamma \mathcal G(v,z)+\alpha \mathcal G(Z,V)+\gamma h^2\mathcal G(W,U) \geq\,\\
&C_1(h^2\gamma\tnorm U \tnorm_{F'}^2+\alpha h^2\tnorm u \tnorm_F^2+\gamma\tnorm z \tnorm_{D}^2+\alpha\tnorm Z \tnorm_{D'}^2)-C_2\gamma\tnorm(u,U)\tnorm_R^2.
\end{aligned}
\ee
We now set $\alpha=\alpha_0\gamma$ for a fixed $0<\alpha_0<\min{\{1,\frac{3}{4}\tilde{\kappa}^2C_1^2,\frac{C_1^2}{4T^2}\}}$ and show that the proposition holds for this choice of $y \in \mathbb V_h^{4N}$ when $\gamma$ is sufficiently small independent of $h$. 
First, note that
$$ a_h(u^0,v^0)=a_h(\p_\tau u^1,\p_\tau v^1)=a_h(U^N,V^N)=a_h(\p_\tau U^N,\p_\tau V^N)=0.$$
We use the Cauchy-Schwarz inequality to obtain the following bounds for the remaining (possibly) negative terms in $\mathcal A(x;y)$
\[
\begin{aligned}
|a_h(u^N,v^N)|&\leq \frac{1}{C_1} \|\nabla u^N\|_h^2+\frac{C_1}{4}\|\nabla \mathcal{I} z^N\|_h^2,\\
|(\p_\tau u^N,\p_\tau v^N)_h|&\leq\frac{(1+T)^2}{C_1}\|\p_\tau u^N\|_h^2+\frac{C_1}{4}\|z^N\|_h^2,\\
h^2|(\p_\tau \nabla U^1,\p_\tau \nabla V^1)_h|&\leq h^2\frac{\kappa^2(1+T)^2}{C_1}\|\p_\tau \nabla U^1\|_h^2+\frac{C_1}{4}\|Z^0\|_h^2,\\ 
\tau \sum_{n=1}^N |(\tau \p_\tau \nabla U^n,\tau \p_\tau \nabla V^n)_h| &\leq \frac{\kappa^2(1+T)^2}{C_1} \tau \sum_{n=1}^N\|\tau \p_\tau \nabla U^n\|_h^2+\frac{C_1}{4}\tau \sum_{n=0}^{N-2}\|Z^n\|_h^2,\\
h^2|\tau \sum_{n=2}^N (\chi_h U^n,w^n)_h| &\leq \frac{4T^2}{C_1}\tau\sum_{n=2}^Nh^2\|U^n\|_h^2+\frac{C_1}{4}\tau \sum_{n=2}^Nh^2\|\p_\tau u^n\|_h^2,\\
|\tau\sum_{n=2}^N (\chi_h V,z)_h| &\leq \frac{1}{\tilde{\kappa}^2C_1}\tau \sum_{n=2}^N\|z^n\|_h^2+\frac{C_1}{4}\tau \sum_{n=0}^{N-2}\|\nabla\tilde{\mathcal{I}} Z^n\|_h^2.
\end{aligned}
\]
Combining these bounds we deduce that
\[
\begin{aligned}
\mathcal A(x;y)\geq\,& \tnorm(u,U)\tnorm_R^2-C_3\gamma\tnorm(u,U)\tnorm_R^2+\alpha\frac{3C_1}{4}\tnorm Z \tnorm_{D'}^2+\alpha\frac{3C_1}{4}h^2\tnorm u \tnorm_F^2\\
&+(\frac{3C_1\gamma}{4}-\frac{\alpha}{\tilde{\kappa}^2C_1})\tnorm z \tnorm_D^2+(C_1\gamma-\frac{4T^2\alpha}{C_1})h^2\tnorm U \tnorm_{F'}^2,
\end{aligned}
\]
where $C_3=C_2+2(1+\kappa^2)(1+T)^2\frac{1}{C_1}$. The first claimed inequality then follows for $\gamma$ sufficiently small. To prove the second inequality in the proposition, we use the Cauchy-Schwarz inequality to obtain the following bounds for $y \in \mathbb V_h^{4N}$:
\bel{coerc2}
\begin{aligned}
&\tnorm(\hat{v},0)\tnorm_R^2 \lesssim \tnorm(u,0)\tnorm_R^2+\tnorm z \tnorm_D^2,\quad \tnorm(0,\hat{V})\tnorm_R^2 \lesssim \tnorm(0,U)\tnorm_R^2+\tnorm Z \tnorm_{D'}^2,\\
&\tau \sum_{n=2}^N\|\hat{w}^n\|_h^2 \lesssim \tau \sum_{n=2}^N\|z^n\|_h^2+h^2\tnorm u \tnorm_F^2,\quad
\tau \sum_{n=0}^{N-2}\|\hat{W}^n\|_h^2\lesssim \tau \sum_{n=2}^N \|Z^n\|_h^2+h^2\tnorm U \tnorm_{F'}^2.
\end{aligned}
\ee
\end{proof}


\section{A weak a priori error estimate and an approximate discrete observability estimate for the error}
\label{weak_sec}
Throughout this section, we will let $(u_h,U_h,z_h,Z_h)$ denote the unique solution to equation \eqref{EL}. The main goal here is to prove an approximate discrete analogue of the continuum observability estimate\footnote{\,Not to be confused with discrete observability estimates in the literature.}. This will be done in several steps. We start by proving a weak preliminary error estimate (Proposition~\ref{discretesol}), and then use this estimate to prove Proposition~\ref{discreteobs} that we call an approximate discrete observability estimate for the error function. This proposition will subsequently be used as a key ingredient to prove the main theorem.

In what follows, we will let $(u_*,U_*)$ denote the continuum solution to equations \eqref{pf}$-$\eqref{dual}. Let us observe that since $(g_0,g_1) \in \mathcal D((-\Delta)^3)$, it follows from Theorem~\ref{smoothness} that
$$ \|u_*\|_{\mathscr X_4(\M)}+\|U_*\|_ {\mathscr X_3(\M)} \leq C \|(g_0,g_1)\|_{H^4(\Omega)\times H^3(\Omega)}$$
where $C>0$ only depends on $\M,\omega,\delta$ and $\mathscr X_3(\M)=\bigcap_{k=0}^{3} \mathcal C^k(0,T;H^{3-k}(\Omega)).$

Due to the mismatch between $\Omega$ and $\Omega_h$, we extend the functions in $\mathbb V_h$ to all of $\Omega_h \cup \Omega$ by setting them to zero on the set $\Omega\setminus \Omega_h$. We will also utilize the extension operator (see \cite{Stein}) $E:H^s(\Omega) \to H^s(\Omega\cup\Omega_h)$, $s\geq0$ to define the extended functions $u^e_*$ and $U_*^e$ on the set $(0,T)\times (\Omega\cup \Omega_h)$ through
$$ u^e_*(t,\cdot)=Eu_*(t,\cdot)\quad\text{and}\quad U^e_*(t,\cdot)=EU_*(t,\cdot)\quad t \in [0,T].$$
We will slightly abuse the notation by dropping the superscript $e$ when there is no confusion.

Let us recall the definition of the $H^1$ projection interpolator $\pi_h: H^1_0(\Omega)\to\mathbb V_h$ defined through
\bel{H_1proj}
a_h(\pi_h u, v) = a_h(u,v) \quad \forall\, v \in \mathbb V_h.
\ee
We have the following lemma. For the proof, we refer the reader to \cite[Lemma 4.2]{BFO}.
\begin{lem}
\label{interpolator}
Let $u \in H^1_0(\Omega)$. Then
\[
\|Eu-\pi_h u\|_{L^2(\Omega_h)} \lesssim h \|u\|_{H^1_0(\Omega)},\]
and if additionally $ u \in H^2(\Omega)$, then
\[
\|Eu-\pi_h u\|_{H^1(\Omega_h)} \lesssim  h\|u\|_{H^2(\Omega)}.
\]
\end{lem}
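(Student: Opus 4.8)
The plan is to treat $\pi_h$ as the Ritz (energy) projection onto $\mathbb V_h\subset H^1_0(\Omega_h)$ and to combine C\'ea's best-approximation property with an Aubin--Nitsche duality argument, the only non-classical ingredient being the bookkeeping forced by the $O(h^2)$ mismatch between $\Omega$ and $\Omega_h$. Write $e:=Eu-\pi_h u$, with $Eu$ understood restricted to $\Omega_h$. The defining identity \eqref{H_1proj} gives the Galerkin orthogonality $a_h(e,v)=0$ for all $v\in\mathbb V_h$, hence
\[
\|\nabla e\|_{L^2(\Omega_h)}\leq\|\nabla(Eu-v)\|_{L^2(\Omega_h)}\qquad\text{for every }v\in\mathbb V_h ;
\]
taking $v=0$ and using boundedness of the Stein extension already yields the crude bound $\|\nabla e\|_{L^2(\Omega_h)}\lesssim\|u\|_{H^1_0(\Omega)}$.

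For the $L^2$ estimate (only $u\in H^1_0(\Omega)$ assumed), let $\phi\in L^2(\Omega_h)$ and let $\psi\in H^1_0(\Omega)$ solve $-\Delta\psi=\bar\phi$ on the \emph{smooth} domain $\Omega$, where $\bar\phi$ is a bounded extension of $\phi$; elliptic regularity on $\Omega$ gives $\|E\psi\|_{H^2(\Omega\cup\Omega_h)}\lesssim\|\psi\|_{H^2(\Omega)}\lesssim\|\phi\|_{L^2(\Omega_h)}$. Integrating by parts over $\Omega_h$, using that $-\Delta E\psi=\phi$ on $\Omega\cap\Omega_h$, that $\pi_h u=0$ on $\partial\Omega_h$, and Galerkin orthogonality, one obtains for any $q_h\in\mathbb V_h$
\[
(e,\phi)_{L^2(\Omega_h)}=a_h\big(e,\,E\psi-q_h\big)-\int_{\partial\Omega_h}Eu\,\partial_\nu(E\psi)\,ds+(\text{terms supported in }\Omega\triangle\Omega_h).
\]
Choosing $q_h$ a quasi-interpolant of $E\psi$, the first term is $\lesssim\|\nabla e\|_{L^2(\Omega_h)}\,h\|\psi\|_{H^2(\Omega)}\lesssim h\|u\|_{H^1_0(\Omega)}\|\phi\|_{L^2(\Omega_h)}$. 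For the boundary term, since $u$ vanishes on $\partial\Omega$ and $\mathrm{dist}(\cdot,\partial\Omega)\leq Ch^2$ on $\partial\Omega_h$, a trace/Poincar\'e inequality across the collar of width $O(h^2)$ between $\partial\Omega$ and $\partial\Omega_h$ gives $\|Eu\|_{L^2(\partial\Omega_h)}\lesssim h\|u\|_{H^1_0(\Omega)}$, while $\|\partial_\nu(E\psi)\|_{L^2(\partial\Omega_h)}\lesssim\|\phi\|_{L^2(\Omega_h)}$ by the trace theorem; the terms over $\Omega\triangle\Omega_h$, a region of measure $O(h^2)$, are estimated in the same spirit. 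Taking the supremum over $\phi$ yields $\|e\|_{L^2(\Omega_h)}\lesssim h\|u\|_{H^1_0(\Omega)}$.

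For the $H^1$ estimate, assume $u\in H^2(\Omega)$. The $L^2$ part of $\|e\|_{H^1(\Omega_h)}$ is already controlled by the previous step, $\|e\|_{L^2(\Omega_h)}\lesssim h\|u\|_{H^1_0(\Omega)}\leq h\|u\|_{H^2(\Omega)}$. For the gradient part, apply the best-approximation bound with $v\in\mathbb V_h$ a quasi-interpolant of $Eu\in H^2(\Omega\cup\Omega_h)$ whose degrees of freedom on $\partial\Omega_h$ have been set to zero (so that $v\in\mathbb V_h$): the standard $\mathbb P_1$ interpolation estimate on the quasi-uniform mesh gives $\|\nabla(Eu-v)\|_{L^2(\Omega_h)}\lesssim h\|Eu\|_{H^2(\Omega\cup\Omega_h)}+R$, where $R$ is the correction coming from zeroing the boundary degrees of freedom. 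Estimating $R$ by an inverse inequality on the layer of elements touching $\partial\Omega_h$, followed once more by the collar trace inequality (using $u|_{\partial\Omega}=0$ and $\mathrm{dist}(\cdot,\partial\Omega)\leq Ch^2$), one finds $R\lesssim h\|u\|_{H^2(\Omega)}$; since $\|Eu\|_{H^2(\Omega\cup\Omega_h)}\lesssim\|u\|_{H^2(\Omega)}$ this gives $\|\nabla e\|_{L^2(\Omega_h)}\lesssim h\|u\|_{H^2(\Omega)}$, completing the argument. Everything apart from the boundary treatment is textbook ($H^2$-regularity of the adjoint problem on the smooth $\Omega$, C\'ea's lemma, $\mathbb P_1$-interpolation and inverse estimates on the quasi-uniform mesh $\mathcal T_h$); the main obstacle is that the terms peculiar to the non-matching geometry — the boundary integral in the duality identity, the contributions over $\Omega\triangle\Omega_h$, and the correction $R$ — must all be shown to be $O(h)$ rather than merely $o(1)$, which is achieved each time by exploiting that the collar separating $\partial\Omega$ from $\partial\Omega_h$ has width $O(h^2)$ and that the relevant function vanishes on $\partial\Omega$. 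This is exactly what \cite[Lemma 4.2]{BFO} carries out in detail.
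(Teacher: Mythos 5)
The paper itself does not prove this lemma: it is quoted with a pointer to \cite[Lemma 4.2]{BFO}. Your outline (Ritz projection, C\'ea, Aubin--Nitsche with the dual problem posed on the smooth domain $\Omega$, and the $\Omega\neq\Omega_h$ mismatch absorbed by collar estimates exploiting $u|_{\partial\Omega}=0$ together with \eqref{boundarydisc}) is the standard route and is in the spirit of the cited proof; the crude $H^1$ stability, the duality identity, and the treatment of the sliver $\Omega\triangle\Omega_h$ of width $O(h^2)$ are all sound at the level of a sketch.

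There is, however, one step that is too loose as written: the claim $R\lesssim h\|u\|_{H^2(\Omega)}$ for the correction produced by zeroing the boundary degrees of freedom. If you bound $R$ by an inverse inequality on the boundary layer and then invoke the collar trace inequality in the only form you state, $\|Eu\|_{L^2(\partial\Omega_h)}\lesssim h\|u\|_{H^1_0(\Omega)}$, you obtain only $R\lesssim h^{1/2}\|u\|_{H^1_0(\Omega)}$, since passing from $L^2(\partial\Omega_h)$ data to the $H^1$-norm of the discrete correction costs a factor $h^{-1/2}$; likewise, quasi-interpolant DOFs defined by volume-patch averages over patches of size $h$ give only $R\lesssim h^{1/2}\|u\|_{H^2(\Omega)}$. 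To reach $O(h)$ you must exploit the $H^2$ regularity in the collar: since $u$ vanishes on $\partial\Omega$, the thin-strip estimate gives $\|\nabla Eu\|_{L^2(\mathcal{N}_{Ch^2})}\lesssim h\|u\|_{H^2(\Omega)}$ for the collar $\mathcal{N}_{Ch^2}$ of width $Ch^2$, hence $\|Eu\|_{L^2(\partial\Omega_h)}\lesssim h^2\|u\|_{H^2(\Omega)}$; with boundary DOFs defined through traces on boundary faces (Scott--Zhang type) this yields $R\lesssim h^{-1/2}\|Eu\|_{L^2(\partial\Omega_h)}\lesssim h^{3/2}\|u\|_{H^2(\Omega)}$, which is more than enough. (Alternatively, place the boundary vertices of $\Omega_h$ on $\partial\Omega$ and use the nodal interpolant, for which the correction vanishes since $u\in H^2(\Omega)\subset C(\overline\Omega)$ for $n\leq 3$.) The same repair is implicitly needed in your duality step: to use Galerkin orthogonality the competitor $q_h$ must lie in $\mathbb V_h$, i.e.\ vanish on $\partial\Omega_h$, so the bound $\|\nabla(E\psi-q_h)\|_{L^2(\Omega_h)}\lesssim h\|\psi\|_{H^2(\Omega)}$ requires exactly the same boundary correction for $\psi$ (which works, since $\psi|_{\partial\Omega}=0$ and $\psi\in H^2(\Omega)$). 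With these adjustments your argument is complete and consistent with what \cite[Lemma 4.2]{BFO} carries out.
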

We define 
\bel{x}
 \tilde{u}_h=u_h-\pi_h u_*, \quad \tilde{U}_h=U_h-\pi_h U_*,\quad \text{and} \quad x_h=(\tilde{u}_h,\tilde{U}_h,z_h,Z_h).
\ee
We have the following proposition.
\begin{prop}
\label{discretesol}
There exists $h_0>0$ such that for all $0<h<h_0$, the following estimate holds
$$ \tnorm z_h \tnorm_{D}+\tnorm Z_h \tnorm_{D'}+h\tnorm \tilde{u}_h \tnorm_F+h\tnorm \tilde{U}_h \tnorm_{F'}+\tnorm(\tilde{u}_h,\tilde{U}_h)\tnorm_R \lesssim h\|(g_0,g_1)\|_{H^{4}(\Omega)\times H^3(\Omega)}.$$
\end{prop}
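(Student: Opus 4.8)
The plan is to derive the estimate from the inf-sup Proposition~\ref{estimate} by testing the Euler-Lagrange equation \eqref{EL} against a carefully chosen test function and controlling the resulting consistency (residual) error. Concretely, I would first observe that $x_h=(\tilde u_h,\tilde U_h,z_h,Z_h)$ satisfies a perturbed Euler-Lagrange identity: since $(u_h,U_h,z_h,Z_h)$ solves \eqref{EL} exactly, subtracting the interpolant $\pi_h u_*$, $\pi_h U_*$ gives
\[
\mathcal A(x_h;y) = a_h(v^0,g_0)+(\p_\tau v^1,g_1)_h - \mathcal A((\pi_h u_*,\pi_h U_*,0,0);y) =: \mathcal E(y), \qquad \forall y \in \mathbb V_h^{4N},
\]
where $\mathcal E$ is the \emph{residual functional} measuring how well the interpolated continuum solution satisfies the discrete scheme. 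By Proposition~\ref{estimate}, there exists $y=(v,V,w,W)$ with
\[
\tnorm y \tnorm_C^2 \lesssim \mathcal A(x_h;y) = \mathcal E(y) \quad\text{and}\quad \mathcal A(x_h;y) \gtrsim \tnorm(\tilde u_h,\tilde U_h)\tnorm_R^2 + h^2\tnorm\tilde u_h\tnorm_F^2 + h^2\tnorm\tilde U_h\tnorm_{F'}^2 + \tnorm z_h\tnorm_D^2 + \tnorm Z_h\tnorm_{D'}^2,
\]
so that, abbreviating the left-hand norm combination by $\mathcal N(x_h)^2$, I get $\mathcal N(x_h)^2 \lesssim \mathcal E(y)$ together with $\tnorm y\tnorm_C \lesssim \mathcal N(x_h)$. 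Hence it suffices to prove the \emph{consistency estimate} $\mathcal E(y) \lesssim h\,\|(g_0,g_1)\|_{H^4\times H^3}\,\tnorm y\tnorm_C$, after which absorbing one factor of $\mathcal N(x_h)$ yields $\mathcal N(x_h) \lesssim h\,\|(g_0,g_1)\|_{H^4\times H^3}$, which is exactly the claim.

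The bulk of the work is therefore estimating the residual $\mathcal E(y)$. I would split it according to the block structure of $\mathcal A$: the $\mathcal A_0$ and $\mathcal A_2$ contributions involve the discrete forward wave operator $\mathcal G$ applied to $\pi_h u_*$ tested against $z_h$-type and $w$-type functions, the $\mathcal A_1$, $\mathcal A_3$ contributions involve the discrete backward wave operator $\mathcal G^*$ applied to $\pi_h U_*$, plus the regularization terms $h^2(\nabla U_*^N,\cdot)$ etc., and finally the data terms combine with $\mathcal R$-type boundary-in-time terms. In each block one writes $\pi_h u_* = u_* + (\pi_h u_* - u_*)$ (with the domain-mismatch extension $u_*^e$ understood), uses that $u_*$ solves $\Box u_* = \chi_\omega U_*$ exactly so that the continuum parts telescope against the $\chi_h$ source terms up to (a) time-discretization error of the second difference $\p_\tau^2$ versus $\p_t^2$, which is $O(\tau^2)=O(h^2)$ times third/fourth time derivatives of $u_*$ controlled by $\|u_*\|_{\mathscr X_4}$, (b) spatial interpolation error $\|\pi_h u_* - u_*\|_{H^1}\lesssim h\|u_*\|_{H^2}$ and $\|\pi_h u_* - u_*\|_{L^2}\lesssim h^2$ from Lemma~\ref{interpolator} and $a_h$-orthogonality \eqref{H_1proj}, (c) the $\chi_h$ vs $\chi_\omega$ error bounded by $h^2$ via \eqref{eq:disc_chi}, and (d) the $\Omega$ vs $\Omega_h$ domain error bounded via \eqref{boundarydisc}. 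Each of these is paired via Cauchy-Schwarz with the appropriate piece of $\tnorm y\tnorm_C$ — this is where the precise design of $\tnorm\cdot\tnorm_C$ (and the $h^2$-weights built into $\tnorm\cdot\tnorm_R$) matters, because the energy-type test functions $w$, $W$ from Lemma~\ref{energyuUzZ} grow like derivatives of $u_h$, $U_h$ but are controlled inside $\tnorm y\tnorm_C$. The regularization terms $\mathcal J_1$ evaluated at $\pi_h U_*$ contribute $h^2\|\nabla U_*^N\|_h^2 + \cdots \lesssim h^2\|U_*\|_{\mathscr X_3}^2$ directly, which is even better than needed. One must also treat the forward-vs-backward-in-time difference in the discrete $\mathcal G^*$ carefully since the scheme uses a backward discrete wave equation for $U$; here the pointwise-in-time smoothness $U_*\in\mathscr X_3$ makes the relevant $O(\tau^2)$ truncation estimates go through.

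The main obstacle, and the step I would budget the most care for, is bookkeeping the \textbf{time-boundary terms}: the summation-by-parts identities used to rewrite $\mathcal G(\pi_h u_*, z_h)$ and $\mathcal G^*(Z_h, \pi_h U_*)$ generate endpoint contributions at $n=0,1$ and $n=N-1,N$, which must be matched against the $\mathcal R$-type terms and the right-hand side data $a_h(v^0,g_0)+(\p_\tau v^1,g_1)_h$. Since $u_*(0)=g_0$, $\p_t u_*(0)=g_1$, $u_*(T)=0$, $\p_t u_*(T)=0$ on the continuum level, these cancel exactly up to interpolation and time-discretization errors of the \emph{initial/final data} — this is precisely why the compatibility class $\mathcal D((-\Delta)^3)$ and the smoothness $u_*\in\mathscr X_4$ are needed, to make e.g. $\p_\tau u_*^1 - g_1 = O(\tau)$ in $L^2$ actually be $O(h)$ in the $\tnorm\cdot\tnorm_R$-compatible sense and $\nabla(u_*^0-g_0)=\nabla(\pi_h g_0 - g_0)=O(h)$. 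Getting every endpoint term to land in the right norm with the right power of $h$ — no worse than $h$, and ideally $h^2$ for most — without losing a factor of $h^{-1}$ to an inverse inequality is the delicate part; the rest is a systematic but lengthy application of Cauchy-Schwarz, Lemma~\ref{interpolator}, the inverse/Poincaré inequalities \eqref{reversepoincare}, and Taylor expansion in time against $\|u_*\|_{\mathscr X_4}+\|U_*\|_{\mathscr X_3}\lesssim \|(g_0,g_1)\|_{H^4\times H^3}$.
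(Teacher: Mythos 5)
Your proposal follows essentially the same route as the paper: the perturbed Euler--Lagrange identity and the consistency bound $\mathcal E(y)\lesssim h\|(g_0,g_1)\|_{H^4\times H^3}\tnorm y\tnorm_C$ are exactly the content of Lemma~\ref{Aupperbound} (with the same $S_1$--$S_5$ block decomposition, interpolation, $\chi_h$ versus $\chi_\omega$, domain-mismatch and time-truncation estimates), and the absorption step via Proposition~\ref{estimate} is precisely the paper's two-line proof of Proposition~\ref{discretesol}. The only small inaccuracy is that the one-sided second differences $\p_\tau^2$, $\tilde\p_\tau^2$ are first-order accurate, so the truncation error is $O(\tau)$ with third time derivatives rather than $O(\tau^2)$, which is still enough for the claimed $O(h)$ rate.
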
 

Let us remark that as an immediate consequence of this proposition, the Lagrange multipliers $(z_h,Z_h)$ converge to zero with a rate that is proportional to the space-time mesh parameter $h$. Moreover, we have
$$\mathcal R(u_h) \lesssim h\|(g_0,g_1)\|_{H^{4}(\Omega)\times H^3(\Omega)},$$
implying that the initial and final states of the discrete solution $u_h$ converge to the desired values at the optimal rate. In order to prove this proposition, we need the following lemma.
\begin{lem}
\label{Aupperbound}
Let $x_h\in \mathbb V_h^{4N}$ be as in \eqref{x}. Then:
\[ \mathcal A(x_h;y) \lesssim h \|(g_0,g_1)\|_{H^{4}(\Omega)\times H^3(\Omega)} \tnorm y \tnorm_C \quad \forall \, y \in \mathbb V_h^{4N}.\]
\end{lem}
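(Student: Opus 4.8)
The plan is to exploit the Galerkin orthogonality hidden in the construction of $x_h$. The discrete solution $(u_h,U_h,z_h,Z_h)$ satisfies the Euler--Lagrange equation \eqref{EL}, so for all $y\in\mathbb V_h^{4N}$ we have $\mathcal A(u_h,U_h,z_h,Z_h;y)=a_h(v^0,g_0)+(\p_\tau v^1,g_1)_h$. The first step is to rewrite the right-hand side of \eqref{EL} in terms of the continuum solution: since $u_*(0,\cdot)=g_0$ and $\p_t u_*(0,\cdot)=g_1$, and using the $H^1$-projection identity \eqref{H_1proj}, one has $a_h(v^0,g_0)=a_h(v^0,\pi_h u_*^0)$, while the term $(\p_\tau v^1,g_1)_h$ must be matched against $(\p_\tau v^1, \p_\tau(\pi_h u_*)^1)_h$ up to a consistency error coming from (a) the time discretization $\p_\tau(\pi_h u_*)^1 \approx \p_t u_*(0,\cdot)$, (b) the interpolation error $\pi_h u_* - u_*$, and (c) the domain mismatch $\Omega$ versus $\Omega_h$. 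Thus $\mathcal A(x_h;y) = \mathcal A(u_h,U_h,z_h,Z_h;y) - \mathcal A(\pi_h u_*,\pi_h U_*,0,0;y)$ equals the difference between the data terms and $\mathcal A(\pi_h u_*,\pi_h U_*,0,0;y)$, which after the rewriting collapses to a sum of consistency/truncation error functionals applied to $y$.

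The second step is to estimate each of these residual functionals term by term, testing against the components of $y=(v,V,w,W)$, and to bound the result by $h\|(g_0,g_1)\|_{H^4\times H^3}\,\tnorm y\tnorm_C$. Concretely, I would insert $u_*$ (resp. $U_*$) for $\pi_h u_*$ (resp. $\pi_h U_*$) wherever possible, using that $u_*$ solves the continuum wave equation $\Box u_* = \chi_\omega U_*$ so that the ``bulk'' parts of $\mathcal G(\pi_h u_*,w) - \tau\sum(\chi_h\pi_h U_*,w)_h$ and of $\mathcal G^*(W,\pi_h U_*)$ cancel against the PDE, leaving: (i) interpolation errors $\pi_h u_* - u_*$, controlled by Lemma~\ref{interpolator} at order $h$ in $H^1$ and $h^2$ in $L^2$ relative to $\|u_*\|_{H^2}$ resp.\ higher norms; (ii) time-discretization truncation errors for $\p_\tau^2$ and $\tilde\p_\tau^2$ against the continuous $\p_t^2$, which are $O(\tau^2)\|u_*\|_{\mathscr X_4}$ or $O(\tau)\|U_*\|_{\mathscr X_3}$ depending on which norm of $y$ they pair with; (iii) the replacement of $\chi_h$ by $\chi_\omega$, controlled by \eqref{eq:disc_chi}; (iv) the domain-mismatch terms, controlled by \eqref{boundarydisc} together with trace/extension estimates. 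The regularization contributions $\mathcal J_1$-derivative terms in $\mathcal A_1$ applied to $\pi_h U_*$ are of the form $h^2(\nabla(\pi_h U_*)^N,\nabla V^N)_h$ etc.; since $\pi_h U_*$ is a bounded-energy discrete function and these come with an explicit $h^2$ (or $\tau h^2$) prefactor while the matching pieces of $\tnorm V\tnorm$ sit inside $\tnorm y\tnorm_C$ with no negative power of $h$, I expect to lose only one power of $h$ here — matching the claimed rate — provided the smoothness $U_*\in\mathscr X_3$, hence $\nabla U_*^N\in H^2(\Omega)$ and $\p_\tau\nabla$ differences are controllable, is used. Throughout, the key point is that $\mathscr X_4$ regularity of $u_*$ and $\mathscr X_3$ of $U_*$ (from Theorem~\ref{smoothness} with $s=3$) gives exactly enough derivatives to absorb every truncation and interpolation error at order $h$.

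The third step is purely bookkeeping: collect all the error bounds, note that each is at most $C h\|(g_0,g_1)\|_{H^4\times H^3}$ times one of the seminorm blocks appearing in $\tnorm y\tnorm_C^2 = \tnorm(v,V)\tnorm_R^2 + \tau\sum_2^N\|w^n\|_h^2 + \tau\sum_0^{N-2}\|W^n\|_h^2$, and apply Cauchy--Schwarz in the time index together with $\tnorm y\tnorm_C$. One subtlety to be careful about: some residual terms naturally pair with $\tnorm V\tnorm_{F'}$ or $\tnorm v\tnorm_F$ rather than with $\tnorm y\tnorm_C$ directly; but since Proposition~\ref{estimate} (second inequality) and the structure of the test functions are not available here, I must instead only use norms that are genuinely dominated by $\tnorm y\tnorm_C$ — which is why the $h^2$ and $\tau h^2$ weights in $\mathcal J_1$ matter, as they convert an $h$-dependent strong norm of $y$ into a piece of $\tnorm y\tnorm_C$ with the right power of $h$ to spare.

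\medskip

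\noindent\textbf{Main obstacle.} The delicate part is the treatment of the term $(\p_\tau v^1,g_1)_h$ and more generally the time-discretization truncation errors near the endpoints $t=0$ and $t=T$, where the second-order difference operators $\p_\tau^2,\tilde\p_\tau^2$ only start at $n=2$ (resp.\ $N-2$) and the usual Taylor-expansion bound $O(\tau^2)$ requires $u_*$ and $U_*$ to have the stated regularity uniformly up to the boundary of $[0,T]$. Ensuring that these boundary-layer truncation errors, paired against $\|w^2\|_h,\|w^N\|_h,\|W^0\|_h,\|W^{N-2}\|_h$, still sum to $O(h)\tnorm y\tnorm_C$ rather than $O(1)$ — i.e.\ that no single time slice contributes an $O(1)$ term — is where the argument has to be done with care, and it is precisely here that the choice to use a \emph{backward} discrete wave equation for $U$ and the compatibility conditions in Definition~\ref{compat} (via Theorem~\ref{smoothness}) are essential.
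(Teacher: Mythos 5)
Your proposal follows essentially the same route as the paper's proof: invoke the Euler--Lagrange equation \eqref{EL}, subtract $\mathcal A((\pi_h u_*,\pi_h U_*,0,0);y)$, cancel the bulk terms against the continuum equations $\Box u_*=\chi_\omega U_*$ and $\Box U_*=0$, and bound the remaining residuals (interpolation via Lemma~\ref{interpolator}, time truncation, $\chi_h$ versus $\chi_\omega$ via \eqref{eq:disc_chi}, domain mismatch via \eqref{boundarydisc}, the data terms at $t=0,T$, and the $h^2$-weighted regularization terms) each by $h\|(g_0,g_1)\|_{H^4(\Omega)\times H^3(\Omega)}$ times a block of $\tnorm y\tnorm_C$, exactly as in the paper's decomposition $S_1+\cdots+S_5$. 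The only minor inaccuracies are cosmetic: the endpoint terms you flag as the main obstacle are dispatched directly by the bound $|S_4|\leq 2\sqrt{\mathcal R(\pi_h u_*)\mathcal R(v)}$ together with Theorem~\ref{smoothness}, and the backward discretization of $U$ is essential for Lemma~\ref{mixedterm} rather than for this lemma.
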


\begin{proof}
Let $y=(v,V,w,W)$. We can use the Euler-Lagrange equation \eqref{EL} to write
$$\mathcal A(x_h;y)=a_h(v^0,g_0)+(\p_\tau v^1,g_1)_h-\mathcal A((\pi_h u_*,\pi_h U_*,0,0);y)=S_1+S_2+S_3+S_4+S_5,$$
where 
\[
\begin{aligned}
S_1&=-\mathcal G(\pi_h u_*,w)+\tau \sum_{n=2}^N(\chi_\omega U_*^n,w^n)_h,\\           
S_2&=-\mathcal G^*(W,\pi_h U_*),\\
S_3&=\tau \sum_{n=2}^N(\chi_h\pi_h U_*^n-\chi_\omega U_*^n,w^n)_h,\\
S_4&=-a_h(\pi_h u_*^N,v^N)+a_h(g_0-\pi_h u_*^0,v^0)-(\p_\tau \pi_h u_*^N,\p_\tau v^N)_h+(g_1-\p_\tau \pi_h u_*^1,\p_\tau v^1)_h,\\
S_5&=-\mathcal A_1((\pi_h U_*,0,0);V).
\end{aligned}
\]
For the term $S_1$, we first observe that $u_*$ satisfies the equation 
$$ \tau \sum_{n=2}^N \int_\Omega \left(\p^2_t u_*^n\cdot w^n+\nabla u_*^n\cdot \nabla w^n\right)\,dx=\tau \sum_{n=2}^N\int_\Omega \chi_\omega U_*^n\cdot w^n\,dx,$$
where $u_*^n=u_*(n\tau,\cdot)$ and $U_*^n=U_*(n\tau,\cdot)$ and we are identifying $u_*,U_*$ with their extensions $u^e_*, U^e_*$. The test function $w^n$ is extended to $\Omega\cup\Omega_h$ by setting it equal to zero outside $\Omega_h$. Note that since $w \in H^1_0(\Omega_h)$ the extended function $w^n$ belongs to $H^1_0(\Omega_h\cup\Omega)$. Together with the definition of the interpolator $\pi_h$, we can write
\[
S_1=\underbrace{\tau\sum_{n=2}^N((1-\pi_h)\p_\tau^2u_*^n,w^n)_h}_{I_1}+\underbrace{\tau \sum_{n=2}^N(\p^2_t u_*^n-\p^2_\tau u_*^n,w^n)_h}_{I_2}+\tau\sum_{n=2}^N(\varsigma_E^n,w^n)_{\Omega_h\setminus\Omega},
\]
where $\varsigma_E u_*^n=E\,(\chi_\omega U_*^n)-\Box E\,u_*^n$. We utilize \cite[Lemma 3.1]{BFO} to write
$$ |(\varsigma_E^n,w^n)|\leq \|\varsigma_E^n\|_{\Omega_h\setminus\Omega}\|w^n\|_{\Omega_h\setminus\Omega}\lesssim (\|\chi_\omega U_*^n\|_{L^2(\Omega)}+\|\p^2_t u_*^n\|_{L^2(\Omega)}+\|u_*^n\|_{H^2(\Omega)})h^2\|\nabla w^n\|_h.$$
Thus using Theorem~\ref{smoothness} for $u_*$, $U_*$ and the inverse discrete inequality in \eqref{reversepoincare} for $w^n$, we can write
\[
\begin{aligned}
|\tau\sum_{n=2}^N(\varsigma_E^n,w^n)_{\Omega_h\setminus\Omega}|^2&\lesssim h^2(\|U_*\|^2_{\mathcal C(0,T;L^2(\Omega))}+\|u_*\|^2_{\mathcal C^2(0,T;L^2(\Omega))}+\|u_*\|^2_{\mathcal C(0,T;H^2(\Omega))})(\tau\sum_{n=2}^N\|w^n\|^2_h)\\
&\lesssim h^2\|(g_0,g_1)\|^2_{H^4(\Omega)\times H^3(\Omega)}\tnorm(0,0,w,0)\tnorm^2_C.
\end{aligned}
\]
Using the same analysis as that of the terms $I_1,I_2$ in
\cite[Proposition~4.3]{BFO} we obtain
\[
\begin{aligned}
|I_1|^2&\lesssim h^2 (\int_0^T \|\nabla \p^2_t u_*\|_{L^2(\Omega)}^2\,dt)(\tau \sum_{n=2}^N\|w^n\|_h^2),\\
|I_2|^2&\lesssim \tau^2(\int_0^T\|\p^3_\tau u_*\|_{L^2(\Omega)}^2\,dt)(\tau \sum_{n=2}^N\|w^n\|_h^2).
\end{aligned}
\]
We can therefore use Theorem~\ref{smoothness} to conclude that
$$|S_1| \lesssim h\|(g_0,g_1)\|_{H^{4}(\Omega)\times H^3(\Omega)}\tnorm(0,0,w,0)\tnorm_C.$$
For the term $S_2$, we first note that 
$$\tau \sum_{n=0}^{N-2}\int_{\Omega}\left( W^n\cdot \p^2_t U^n_*+\nabla W^n\cdot \nabla U_*^n\right)\,dx=0,$$
where we recall the notation $U_*^{n}(\cdot)=U_*(n\tau,\cdot)$. Therefore, using the definition of the interpolator $\pi_h$ we can write
$$S_2=\tau\sum_{n=0}^{N-2}(W^n,(1-\pi_h)\tilde{\p}_\tau^2U_*^n)_h+\tau \sum_{n=0}^{N-2}(W^n,\p^2_t U_*^n-\tilde{\p}^2_\tau U_*^n)_h+\tau\sum_{n=0}^{N-2}(\tilde{\varsigma}_E^n,W^n),$$
where $\tilde{\varsigma}_E^n=-\Box E\,U_*^n$. Analogous to the term $S_1$ we obtain the bound
$$|S_2|\lesssim h\|(g_0,g_1)\|_{H^{4}(\Omega)\times H^3(\Omega)}\tnorm(0,0,0,W)\tnorm_C,$$
where we have used Theorem~\ref{smoothness} again. For the term $S_3$, we write
\begin{multline*}
|S_3|=|\tau \sum_{n=2}^N((\chi_h-\chi_\omega)\pi_h U_*^n,w^n)_h+\tau \sum_{n=2}^N(\chi_\omega(\pi_h U_*^n- U_*^n),w^n)_h|\\
\lesssim h\|(g_0,g_1)\|_{H^{4}(\Omega)\times H^3(\Omega)}\tnorm(0,0,w,0)\tnorm_C,
\end{multline*}
where we have used the bound \eqref{eq:disc_chi} together with Lemma~\ref{interpolator} for the first term and Lemma~\ref{interpolator} for the second term. For $S_4$, we use the the fact that $u_*(0,\cdot)=g_0$, $\p_t u(0,\cdot)=g_1$, and equation \eqref{ctrl} with the approximation properties of $\pi_h$ to deduce that
$$|S_4| \leq 2 \sqrt{\mathcal R(\pi_h u_*)\mathcal R(v)}\lesssim h\|(g_0,g_1)\|_{H^{4}(\Omega)\times H^3(\Omega)}\tnorm(v,0)\tnorm_R,$$
where we have used the fact that $\pi_h u_*^N=0$ and Theorem~\ref{smoothness} to obtain the following bounds:
\[
\begin{aligned}
\|\p_\tau \pi_h u_*^N\|_h &=\|(\pi_h-1)\p_\tau u_*^N+\p_\tau u_*^N-\pd_tu_*^N\|_h \lesssim h \|(g_0,g_1)\|_{H^{4}(\Omega)\times H^3(\Omega)},\\
\|\p_\tau \pi_h u_*^1-g_1\|_h &=\|(\pi_h-1)\p_\tau u_*^1+\p_\tau u_*^1-g_1\|_h \lesssim h \|(g_0,g_1)\|_{H^{4}(\Omega)\times H^3(\Omega)},\\
\|\nabla (\pi_h u_*^0- g_0)\|_h &=\|\nabla (\pi_h-1) u_*^0\|_h \lesssim h \|(g_0,g_1)\|_{H^{4}(\Omega)\times H^3(\Omega)}.
\end{aligned}
\]
Finally, for the term $S_5$, we write
$$|S_5| \leq 2\sqrt{\mathcal J_1(\pi_h U_*)\mathcal J_1(V)}\lesssim h\|(g_0,g_1)\|_{H^{4}(\Omega)\times H^3(\Omega)}\tnorm(0,V)\tnorm_R,$$
where we have used the following bounds for $\mathcal J_1(\pi_h U_*)$:
\[
\begin{aligned}
\tau \sum_{n=1}^N\|\tau\nabla\p_\tau \pi_h U_*\|_h^2 &\lesssim \tau^2\|U_*\|_{H^1(0,T;H^1(\Omega))}^2\lesssim h^2\|(g_0,g_1)\|_{H^{4}(\Omega)\times H^3(\Omega)}^2,\\
\|h\nabla \p_\tau \pi_h U_*^N\|_h^2 &\lesssim h^2\|U_*\|^2_{H^2(0,T;H^1(\Omega))}\lesssim h^2\|(g_0,g_1)\|_{H^{4}(\Omega)\times H^3(\Omega)}^2,\\
\|h\nabla \p_\tau \pi_h U_*^1\|_h^2 &\lesssim h^2\|U_*\|^2_{H^2(0,T;H^1(\Omega))}\lesssim h^2\|(g_0,g_1)\|_{H^{4}(\Omega)\times H^3(\Omega)}^2,\\
\|h\nabla \pi_h U_*^N\|_h^2 &\lesssim h^2\|U_*\|_{H^1(0,T;H^1(\Omega))}\lesssim h^2\|(g_0,g_1)\|_{H^{4}(\Omega)\times H^3(\Omega)}^2.
\end{aligned}
\]
Combining the estimates yields the claim. 
\end{proof}

\begin{proof}[Proof of Proposition~\ref{discretesol}]
Let $x_h$ be as in equation \eqref{x}. Using Proposition~\ref{estimate}, there exists $y \in \mathbb V_h^{4N}$ such that 
$$ \mathcal A(x_h;y) \gtrsim (\tnorm(\tilde{u}_h,\tilde{U}_h)\tnorm_R +h\tnorm \tilde{u}_h \tnorm_{F}+ h\tnorm \tilde{U}_h \tnorm_{F'}+\tnorm z_h \tnorm_{D}+\tnorm Z_h \tnorm_{D'})\tnorm y \tnorm_C.$$
Combining this estimate with Lemma~\ref{Aupperbound} yields the claim.
\end{proof}
Lemma~\ref{interpolator} can be used together with Theorem~\ref{smoothness} to obtain the following corollary.
\begin{cor}
\label{discretesol2}
$$\tnorm u_h \tnorm_F+\tnorm U_h \tnorm_{F'} \lesssim \|(g_0,g_1)\|_{H^{4}(\Omega)\times H^3(\Omega)}.$$
\end{cor}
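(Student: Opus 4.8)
The plan is to decompose $u_h = \tilde u_h + \pi_h u_*$ and $U_h = \tilde U_h + \pi_h U_*$ as in \eqref{x} and use the triangle inequality for the seminorms $\tnorm\cdot\tnorm_F$ and $\tnorm\cdot\tnorm_{F'}$ (which are genuine seminorms on $\mathbb V_h^{N+1}$ by inspection of their definitions), obtaining
$$
\tnorm u_h\tnorm_F + \tnorm U_h\tnorm_{F'} \leq \big(\tnorm \tilde u_h\tnorm_F + \tnorm \tilde U_h\tnorm_{F'}\big) + \big(\tnorm \pi_h u_*\tnorm_F + \tnorm \pi_h U_*\tnorm_{F'}\big).
$$
For the first bracket, Proposition~\ref{discretesol} gives $h\tnorm\tilde u_h\tnorm_F + h\tnorm\tilde U_h\tnorm_{F'} \lesssim h\|(g_0,g_1)\|_{H^4(\Omega)\times H^3(\Omega)}$; dividing by $h$ yields the bound with a constant independent of $h$. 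So everything reduces to estimating the interpolants $\pi_h u_*$ and $\pi_h U_*$ in the respective discrete norms.

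For that second bracket I would use three facts: (a) $\pi_h$ acts only in space, hence commutes with the discrete time differences, so e.g. $\p_\tau \pi_h u_*^n = \pi_h \p_\tau u_*^n$, $\p_\tau^2 \pi_h U_*^n = \pi_h \p_\tau^2 U_*^n$, and similarly with $\nabla$; (b) $\pi_h$ is $H^1$-stable, $\|\nabla \pi_h v\|_h \lesssim \|v\|_{H^1(\Omega)}$, and, combined with Lemma~\ref{interpolator} and the Poincar\'e inequality, also satisfies $\|\pi_h v\|_h \lesssim \|v\|_{H^1(\Omega)}$ for $v\in H^1_0(\Omega)$; (c) discrete time differences of a smooth-in-time function are controlled by time-averages of its continuous derivatives, e.g. $\|\p_\tau u_*^n\|_{L^2(\Omega)} \leq \tau^{-1}\int_{t_{n-1}}^{t_n}\|\p_t u_*(s,\cdot)\|_{L^2(\Omega)}\,ds$, and analogously for $\p_\tau\nabla u_*^n$ and for the second-order differences $\p_\tau^2 U_*^n$, $\p_\tau\nabla U_*^n$. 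Combining (a)--(c), every term appearing in $\tnorm\pi_h u_*\tnorm_F^2$ and in $\tnorm\pi_h U_*\tnorm_{F'}^2$ — including the endpoint terms $\|\p_\tau U_*^1\|_h^2$ and $\|\nabla U_*^1\|_h^2$ — is bounded by a norm of $u_*$ in the scale $\mathscr X_4(\M)$, respectively of $U_*$ in $\mathscr X_3(\M)$; the $\tau$-weighted sums over $n$ contribute only a harmless factor of $T$, and the $h\|\cdot\|_{H^1}$ remainders from the interpolation estimates are harmless since $h<h_0\leq 1$. Theorem~\ref{smoothness} then converts these continuum norms into $\|(g_0,g_1)\|_{H^4(\Omega)\times H^3(\Omega)}$, exactly as in the displayed estimate opening Section~\ref{weak_sec}.

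The only mildly delicate points — and hence what I would flag as the main obstacle, though it is routine — are the bookkeeping across the $\Omega$/$\Omega_h$ mismatch (handled via the Stein extension and Lemma~\ref{interpolator}, just as in the proof of Lemma~\ref{Aupperbound}) and the systematic conversion of first- and second-order discrete time differences into averages of the corresponding continuous derivatives. Both of these have already appeared in essentially the same form in the proof of Lemma~\ref{Aupperbound} (in the treatment of $\mathcal J_1(\pi_h U_*)$ and of the terms $I_1$, $I_2$), so no new ideas are needed; the estimate follows by assembling these pieces.
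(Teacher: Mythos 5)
Your proposal is correct and follows exactly the route the paper intends (and leaves implicit): split $u_h=\tilde u_h+\pi_h u_*$, $U_h=\tilde U_h+\pi_h U_*$, absorb the error parts via Proposition~\ref{discretesol} after cancelling the factor $h$, and bound the interpolated continuum parts using the stability/approximation properties of $\pi_h$ (Lemma~\ref{interpolator}), commutation of $\pi_h$ and $\nabla$ with $\p_\tau$, difference-quotient bounds by continuous time derivatives, and Theorem~\ref{smoothness}. No gaps; the details you flag (the $\Omega/\Omega_h$ bookkeeping and the conversion of discrete differences to averages of continuous derivatives) are indeed handled just as in the proof of Lemma~\ref{Aupperbound}.
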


We are now ready to state two key ingredients of the proof of Theorem~\ref{t1}. The first estimate is regarding the error function $\tilde{u}_h$. We recall that for each $n$, $\tilde{u}^n_h$ is extended to $\Omega\cup \Omega_h$ by setting it to be zero outside $\Omega_h$.
\begin{lem}
\label{wellposed}
Let $\tilde{u}_h$ be as defined in equation \eqref{x}. For $n=1,\ldots,N$, the following estimate holds:
$$ \|\tilde{u}_h^n\|^2_{H^1(\Omega)}+\|\p_\tau \tilde{u}_h^n\|^2_{L^2(\Omega)} \lesssim h^2\|(g_0,g_1)\|^2_{H^4(\Omega)\times H^3(\Omega)}+\tau\sum_{n=2}^N\|\sqrt{\chi_h}\tilde{U}_h^n\|^2_{h}.$$ 
\end{lem}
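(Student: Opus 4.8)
The plan is to read off from the Euler--Lagrange equation \eqref{EL} a discrete forward wave equation satisfied by the error $\tilde u_h$, with source $\chi_h\tilde U_h$ and a consistency residual of size $O(h)$, and then close a discrete energy estimate with a discrete Gronwall inequality. First, observe that since the test variables $w,W$ do not enter the right-hand side of \eqref{EL}, the $\mathcal A_2$-component of \eqref{EL} reads $\mathcal G(u_h,w)-\tau\sum_{n=2}^N(\chi_h U_h^n,w^n)_h=0$ for every $w\in\mathbb V_h^{N-1}$. Subtracting $\mathcal A_2(\pi_h u_*,\pi_h U_*;w)$ and using bilinearity, exactly the manipulations carried out for the terms $S_1$ and $S_3$ in the proof of Lemma~\ref{Aupperbound} (which use Theorem~\ref{smoothness} for $u_*,U_*$, the interpolation bounds of Lemma~\ref{interpolator}, and \eqref{eq:disc_chi}) yield
\[
\mathcal G(\tilde u_h,w)-\tau\sum_{n=2}^N(\chi_h\tilde U_h^n,w^n)_h=\mathcal E(w),\qquad |\mathcal E(w)|\lesssim h\,\|(g_0,g_1)\|_{H^4(\Omega)\times H^3(\Omega)}\Big(\tau\sum_{n=2}^N\|w^n\|_h^2\Big)^{1/2}.
\]

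Next I would fix $1\le m\le N$ and test this identity with $w^n=\p_\tau\tilde u_h^n$ for $2\le n\le m$ and $w^n=0$ for $n>m$. Writing $\p^2_\tau\tilde u_h^n=\tfrac1\tau(\p_\tau\tilde u_h^n-\p_\tau\tilde u_h^{n-1})$ and telescoping, the backward-difference structure of $\mathcal G$ gives
\[
\mathcal G(\tilde u_h,w)=\tfrac12\|\p_\tau\tilde u_h^m\|_h^2+\tfrac12\|\nabla\tilde u_h^m\|_h^2-\tfrac12\|\p_\tau\tilde u_h^1\|_h^2-\tfrac12\|\nabla\tilde u_h^1\|_h^2+\tfrac{\tau^2}{2}\sum_{n=2}^m\big(\|\p^2_\tau\tilde u_h^n\|_h^2+\|\nabla\p_\tau\tilde u_h^n\|_h^2\big),
\]
so that, with $G^m:=\tfrac12\|\p_\tau\tilde u_h^m\|_h^2+\tfrac12\|\nabla\tilde u_h^m\|_h^2$, all remainder terms have a favorable sign and $G^m\le G^1+|\tau\sum_{n=2}^m(\chi_h\tilde U_h^n,\p_\tau\tilde u_h^n)_h|+|\mathcal E(w)|$. (Note that no CFL-type restriction on $\tau/h$ is needed here, precisely because the control variable is discretized by a \emph{backward} wave equation.) Since $\|\chi_h\|_{L^\infty}$ is bounded uniformly by \eqref{eq:disc_chi}, Cauchy--Schwarz in the pairing $(\chi_h\tilde U_h^n,\p_\tau\tilde u_h^n)_h\le\|\sqrt{\chi_h}\tilde U_h^n\|_h\|\sqrt{\chi_h}\,\p_\tau\tilde u_h^n\|_h$ and Young's inequality give $|\tau\sum_{n=2}^m(\chi_h\tilde U_h^n,\p_\tau\tilde u_h^n)_h|\le C\tau\sum_{n=2}^N\|\sqrt{\chi_h}\tilde U_h^n\|_h^2+\tfrac14\tau\sum_{n=2}^mG^n$, while Young's inequality applied to the bound on $\mathcal E(w)$ (with this choice of $w$, so $\tau\sum_{n=2}^N\|w^n\|_h^2=\tau\sum_{n=2}^m\|\p_\tau\tilde u_h^n\|_h^2\le2\tau\sum_{n=2}^m G^n$) gives $|\mathcal E(w)|\le Ch^2\|(g_0,g_1)\|_{H^4(\Omega)\times H^3(\Omega)}^2+\tfrac12\tau\sum_{n=2}^m G^n$. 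Finally $G^1\lesssim h^2\|(g_0,g_1)\|_{H^4(\Omega)\times H^3(\Omega)}^2$: the quantities $\|\p_\tau\tilde u_h^1\|_h$ and $\|\nabla\tilde u_h^0\|_h$ are dominated by $\tnorm(\tilde u_h,\tilde U_h)\tnorm_R\lesssim h\|(g_0,g_1)\|_{H^4(\Omega)\times H^3(\Omega)}$ (Proposition~\ref{discretesol}), and $\|\nabla\tilde u_h^1\|_h\le\|\nabla\tilde u_h^0\|_h+\kappa\|\p_\tau\tilde u_h^1\|_h$ by \eqref{reversepoincare}.

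Collecting these estimates yields $G^m\le C\big(h^2\|(g_0,g_1)\|_{H^4(\Omega)\times H^3(\Omega)}^2+\tau\sum_{n=2}^N\|\sqrt{\chi_h}\tilde U_h^n\|_h^2\big)+C\tau\sum_{n=2}^m G^n$ for all $m$; absorbing the $n=m$ term on the left (legitimate for $\tau$, hence $h$, small) and applying the discrete Gronwall inequality gives the asserted bound for $m=2,\dots,N$, with the constant depending only on $T,\M,\omega,\delta$; the case $m=1$ is already contained in the bound for $G^1$. Passing from $\|\nabla\tilde u_h^m\|_h$, $\|\p_\tau\tilde u_h^m\|_h$ to $\|\tilde u_h^m\|_{H^1(\Omega)}$, $\|\p_\tau\tilde u_h^m\|_{L^2(\Omega)}$ is then immediate, since $\tilde u_h^m$ is extended by zero outside $\Omega_h$ and the Poincar\'e inequality holds with an $h$-independent constant.

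I expect the main obstacle to be the bookkeeping around the consistency residual: $\mathcal E(\p_\tau\tilde u_h)$ depends on the unknown through $(\tau\sum_{n=2}^m\|\p_\tau\tilde u_h^n\|_h^2)^{1/2}$, and it is essential to absorb this factor into the Gronwall iteration via Young's inequality rather than estimating it by $\tnorm\tilde u_h\tnorm_F$ (which is only $O(1)$ by Corollary~\ref{discretesol2}, and would degrade the rate from $h$ to $h^{1/2}$). The second point that needs care is keeping the source contribution in exactly the weighted form $\tau\sum_{n=2}^N\|\sqrt{\chi_h}\tilde U_h^n\|_h^2$ appearing in the statement, since it is this quantity --- rather than $\tnorm\tilde U_h\tnorm_{F'}$ --- that will later be controlled through the approximate discrete observability estimate (Proposition~\ref{discreteobs}).
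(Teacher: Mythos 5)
Your proof is correct and follows essentially the same route as the paper: the paper derives the error's discrete wave equation from \eqref{EL}, bounds the initial error via Proposition~\ref{discretesol}, and then cites standard discrete energy estimates for the wave equation (Dupont), which is precisely the energy identity, Young absorption and discrete Gronwall argument you write out in detail, with the consistency residual handled exactly as in the terms $S_1$, $S_3$ of Lemma~\ref{Aupperbound}. One minor aside: the absence of a CFL restriction stems from the implicit structure of $\mathcal G$ itself (the stiffness term is evaluated at the newest time level, producing the nonnegative $\tau^2$-dissipation terms in your identity), not from the backward discretization of the control variable $U$, which instead plays its role in Lemma~\ref{mixedterm}.
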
 
\begin{proof}
Note that Proposition~\ref{discretesol} implies that $\|\tilde{u}^0_h\|_{H^1(\Omega)}+\|\p_\tau \tilde{u}^1_h\|_{L^2(\Omega)}\lesssim h\|(g_0,g_1)\|_{H^{4}(\Omega)\times H^3(\Omega)}$. Recall that $u_h$ satisfies the discrete wave equation
$$ \tau \sum_{n=2}^N \left((\p^2_\tau u_h^n,w^n)_h+a_h(u_h^n,w^n)\right)=\tau \sum_{n=2}^N (\chi_h U_h^n,w^n)_h,$$
and that $u_*$ solves the wave equation \eqref{pf}. Thus standard discrete energy estimates for the wave equation apply to
derive the claimed inequality (see for example
  \cite[Lemma 6]{Dup73}).
\end{proof}

Next, we state the following approximate observability estimate for the error function $\tilde{U}_h$ defined in \eqref{x}. We remind the reader that for each $n$, $U_h^n$ is extended to $\Omega\cup\Omega_h$ by setting it to zero outside $\Omega_h$.
\begin{prop}
\label{discreteobs}
Let $\tilde{U}_h$ be as defined in equation \eqref{x}. For $n=1,\ldots,N$, the following estimate holds:
$$\|\tilde{U}_h^n\|^2_{L^2(\Omega)}+\|\p_\tau \tilde{U}_h^n\|^2_{H^{-1}(\Omega)} \lesssim h^2 \|(g_0,g_1)\|^2_{H^4(\Omega)\times H^3(\Omega)} + \tau \sum_{n=2}^N \|\sqrt{\chi_h}\tilde{U}_h^n\|^2_{h}.$$ 
\end{prop}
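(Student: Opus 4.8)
The plan is to reduce Proposition~\ref{discreteobs} to the continuum observability estimate of Theorem~\ref{observability}, applied to a suitable space--time lift of the discrete error $\tilde U_h$, and then to evaluate the resulting bound at the grid times.

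\emph{Step 1: $\tilde U_h$ solves a discrete backward wave equation with a small source.} Taking $v=V=w=0$ in the Euler--Lagrange equation \eqref{EL} forces $\mathcal A_3(U_h;W)=\mathcal G^*(W,U_h)=0$ for all $W\in\mathbb V_h^{N-1}$, hence $\mathcal G^*(W,\tilde U_h)=-\mathcal G^*(W,\pi_h U_*)$. The estimate of the term $S_2$ in the proof of Lemma~\ref{Aupperbound} gives $|\mathcal G^*(W,\pi_h U_*)|\lesssim h\|(g_0,g_1)\|_{H^4(\Omega)\times H^3(\Omega)}\bigl(\tau\sum_{n=0}^{N-2}\|W^n\|_h^2\bigr)^{1/2}$, so by Riesz representation there is $F_h\in\mathbb V_h^{N-1}$ with
\[
\tau\sum_{n=0}^{N-2}\Bigl((W^n,\tilde{\p}^2_\tau\tilde U_h^n)_h+a_h(W^n,\tilde U_h^n)\Bigr)=\tau\sum_{n=0}^{N-2}(W^n,F_h^n)_h,\qquad \tau\sum_{n=0}^{N-2}\|F_h^n\|_h^2\lesssim h^2\|(g_0,g_1)\|_{H^4(\Omega)\times H^3(\Omega)}^2 .
\]
Using Corollary~\ref{discretesol2}, Proposition~\ref{discretesol} and standard (conservative) discrete energy estimates for this equation and for its time difference, one obtains \emph{uniform in $n$} bounds $\|\nabla\tilde U_h^n\|_h+\|\p_\tau\tilde U_h^n\|_h+\|\tilde{\p}^2_\tau\tilde U_h^n\|_h+\|\p_\tau\nabla\tilde U_h^n\|_h\lesssim\|(g_0,g_1)\|_{H^4(\Omega)\times H^3(\Omega)}$. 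To avoid end effects at $n=0$ and $n=N$ I would first extend $(\tilde U_h^n)$ by two indices at each end by propagating the identity above with zero source; the energy estimates bound the new values and keep the relevant $\ell^2_\tau$--norms unchanged up to constants.

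\emph{Step 2: the continuum lift and its d'Alembertian residual.} Let $\mathcal V_h\in\mathcal C^1([0,T];\mathbb V_h)$ be the piecewise cubic Hermite reconstruction in time with nodal values $\mathcal V_h(t_n)=\tilde U_h^n$ and nodal velocities the second--order difference $\p_t\mathcal V_h(t_n)=\tfrac1{2\tau}(\tilde U_h^{n+1}-\tilde U_h^{n-1})$, extended by zero to $\Omega\setminus\Omega_h$. The core of the argument is the residual bound $\|\Box\mathcal V_h\|_{L^2(0,T;H^{-1}(\Omega))}\lesssim h\|(g_0,g_1)\|_{H^4(\Omega)\times H^3(\Omega)}$. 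For a.e.\ $t\in(t_{n-1},t_n)$ and $\psi\in H^1_0(\Omega_h)$ one writes, using $\mathcal V_h(t),\p_t^2\mathcal V_h(t)\in\mathbb V_h$ and $a_h(\mathcal V_h(t),\psi)=a_h(\mathcal V_h(t),\pi_h\psi)$,
\[
\langle\Box\mathcal V_h(t),\psi\rangle=\bigl[(\p_t^2\mathcal V_h(t),\pi_h\psi)_h+a_h(\mathcal V_h(t),\pi_h\psi)\bigr]+(\p_t^2\mathcal V_h(t),(1-\pi_h)\psi)_{L^2(\Omega_h)} .
\]
Since $\p_t^2\mathcal V_h(t)$ is an explicit convex combination of two discrete second time differences of $\tilde U_h$ (each of the form $\tilde{\p}^2_\tau\tilde U_h^m$ up to a term $\tau\,\p_\tau^3\tilde U_h^{n+1}$) and $\mathcal V_h(t)$ equals a convex combination of nodal values up to a term of size $\tau$ in $H^1_0$, the bracket collapses, via the discrete equation of Step~1, to $\tau$-weighted combinations of $a_h(\p_\tau\tilde U_h^m,\pi_h\psi)$, of $(F_h^m,\pi_h\psi)_h$, and of $\tau\,(\p_\tau^3\tilde U_h^{n+1},\pi_h\psi)_h$ (the last handled through $(\p_\tau^3\tilde U_h^{n+1},W)_h=-a_h(\p_\tau\tilde U_h^{n-1},W)+(\p_\tau F_h^{n-1},W)_h$), while the last term is $\lesssim h\|\p_t^2\mathcal V_h(t)\|_{L^2(\Omega_h)}\|\psi\|_{H^1_0}$ by Lemma~\ref{interpolator}. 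Squaring in $\psi$, integrating over $(t_{n-1},t_n)$ and summing in $n$, every contribution is $h^2$ times a bounded quantity ($\tau\sum_n\|F_h^n\|_h^2$, $\tau\sum_n\|\p_\tau\nabla\tilde U_h^n\|_h^2$, $\tau\sum_n\|F_h^n-F_h^{n-1}\|_h^2$, and $\tau\sum_n 1$ multiplied by the uniform $\mathcal O(1)$ bounds of Step~1), which gives the residual estimate. Separately, since $\mathcal V_h(t)\in H^1_0(\Omega_h)$ and $\dis(\cdot,\p\Omega)\le Ch^2$ on $\p\Omega_h$, a boundary--layer estimate as in \cite[Lemma~3.1]{BFO} gives $\|\mathcal V_h\|_{L^2((0,T)\times\p\Omega)}\lesssim h^{3/2}\|\nabla\mathcal V_h\|_{L^2(\M)}\lesssim h^{3/2}\|(g_0,g_1)\|_{H^4(\Omega)\times H^3(\Omega)}$, and the $\Omega$--$\Omega_h$ mismatch in $\langle\Box\mathcal V_h,\cdot\rangle$ is absorbed by the same $\varsigma_E$--type arguments used elsewhere.

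\emph{Step 3: apply observability and conclude.} By Hypothesis~\ref{hypo} the set $(0,T)\times\omega_\delta$ satisfies the geometric control condition, so Theorem~\ref{observability} applied to $\mathcal V_h|_{\M}$ (with $\tilde\omega=\omega_\delta$) bounds $\sup_{t\in[0,T]}(\|\mathcal V_h(t)\|_{L^2(\Omega)}+\|\p_t\mathcal V_h(t)\|_{H^{-1}(\Omega)})$ by $\|\mathcal V_h\|_{L^2((0,T)\times\omega_\delta)}+\|\Box\mathcal V_h\|_{L^2(0,T;H^{-1}(\Omega))}+\|\mathcal V_h\|_{L^2((0,T)\times\p\Omega)}$. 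As $\chi_\omega\equiv1$ on $\omega_\delta$, \eqref{eq:disc_chi} gives $\chi_h\ge\tfrac12$ there for $h$ small, so $\|\mathcal V_h\|^2_{L^2((0,T)\times\omega_\delta)}\lesssim\tau\sum_{n=2}^N\|\sqrt{\chi_h}\tilde U_h^n\|_h^2+h^2\|(g_0,g_1)\|_{H^4(\Omega)\times H^3(\Omega)}^2$, the two missing grid times being absorbed via Proposition~\ref{discretesol}. Together with Step~2 this yields $\sup_{t}(\|\mathcal V_h(t)\|_{L^2(\Omega)}+\|\p_t\mathcal V_h(t)\|_{H^{-1}(\Omega)})\lesssim h\|(g_0,g_1)\|_{H^4(\Omega)\times H^3(\Omega)}+(\tau\sum_{n=2}^N\|\sqrt{\chi_h}\tilde U_h^n\|_h^2)^{1/2}$. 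Finally $\|\tilde U_h^n\|_{L^2(\Omega)}=\|\mathcal V_h(t_n)\|_{L^2(\Omega)}$, and $\|\p_\tau\tilde U_h^n-\p_t\mathcal V_h(t_n)\|_{H^{-1}(\Omega)}=\tfrac\tau2\|\p_\tau^2\tilde U_h^{n+1}\|_{H^{-1}(\Omega)}\lesssim h\|(g_0,g_1)\|_{H^4(\Omega)\times H^3(\Omega)}$ by the uniform bounds of Step~1; squaring and combining gives the proposition.

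\emph{Main obstacle.} The decisive and most delicate point is the residual estimate of Step~2. The discrete error $\tilde U_h$ is rough at the grid scale --- $\|\nabla\tilde U_h^n\|_h$ and $\|\tilde{\p}^2_\tau\tilde U_h^n\|_h$ are only $\mathcal O(1)$, not $\mathcal O(h)$ --- so neither $\p_t^2\mathcal V_h$ nor $\Delta\mathcal V_h$ is individually small in $L^2(0,T;H^{-1})$; the smallness emerges only from the exact cancellation carried by the discrete wave equation, with the $H^1$--projection $\pi_h$ playing the role of separating the genuinely $\mathcal O(h)$ part of the residual from the $\mathcal O(1)$ pieces. Keeping this cancellation valid uniformly up to the time endpoints (the preliminary extension of $\tilde U_h$), choosing the nodal velocities so that $\p_t^2\mathcal V_h$ stays consistent (a second--order discrete velocity is needed, since Hermite interpolation amplifies a first--order velocity error into an $\mathcal O(1)$ error in the second derivative), and tracking the $\Omega$--$\Omega_h$ mismatch are the accompanying technical points.
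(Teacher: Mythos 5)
Your overall strategy is the same as the paper's (lift the discrete error to continuous time, invoke Theorem~\ref{observability} on $(0,T)\times\omega_\delta$, bound boundary, residual and interior terms, then return to the grid times), but your execution has a genuine gap precisely at the point you yourself identify as decisive. In Step~1 you claim uniform-in-$n$ bounds $\|\tilde{\p}^2_\tau\tilde U_h^n\|_h+\|\p_\tau\nabla\tilde U_h^n\|_h\lesssim\|(g_0,g_1)\|_{H^4\times H^3}$ from Corollary~\ref{discretesol2}, Proposition~\ref{discretesol} and ``standard discrete energy estimates''. Those results only give $\tau$-weighted $\ell^2$-in-time control (and for $\tilde U_h$ only after multiplication by $h$), not pointwise-in-$n$ control. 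To get the claimed bounds by an energy argument for the time-differenced equation you would need $O(1)$ control of the second time difference at the final time, which is not provided by Proposition~\ref{discretesol}; and directly from the discrete equation one has $\tilde{\p}^2_\tau\tilde U_h^n=F_h^n-\Delta_h$-type term, whose discrete Laplacian part is generically of size $h^{-1}\|\nabla\tilde U_h^n\|_h$ by the inverse inequality \eqref{reversepoincare}, so a uniform $L^2$ bound on $\tilde{\p}^2_\tau\tilde U_h^n$ is not available. These unproven bounds are then used exactly where the proof is hardest: in the ``bracket collapses'' residual estimate of Step~2 (to make $\|\Box\mathcal V_h(t)\|_{H^{-1}(\Omega)}$ small pointwise in $t$) and in the final conversion $\|\p_\tau\tilde U_h^n-\p_t\mathcal V_h(t_n)\|_{H^{-1}}\lesssim\tau\|\p^2_\tau\tilde U_h^{n+1}\|_{H^{-1}}$. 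Some occurrences could be repaired by integrating in time and using only the $\ell^2_\tau$ bounds, or by estimating $\p^2_\tau\tilde U_h$ in $H^{-1}(\Omega)$ through the discrete equation together with a uniform bound on $\|\nabla\tilde U_h^n\|_h$ (which does follow from a basic energy estimate as in Lemma~\ref{wellposed}); but as written the central estimate of Step~2 is not established, and the extension-by-two-indices device and the absorption of the $n=0,1$ contributions on $\omega_\delta$ are asserted rather than proved.

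It is worth contrasting this with how the paper avoids the difficulty. The paper reconstructs $U_h$ (not $\tilde U_h$) piecewise affinely in time, sets $\mathcal E=\hat U_h-U_*$, and measures the wave residual only in the weak space--time norm $H^{-1}(\M)$, i.e.\ by duality against $W\in H^1_0(\M)$: one time derivative is moved onto $W$, the test function is averaged over time slabs, and the Euler--Lagrange relation $\mathcal G^*(W,U_h)=0$ from \eqref{EL} is used through the interpolant $\pi_h$. This way no second derivative of the reconstruction is ever needed, and only the $\ell^2_\tau$ quantities controlled by Corollary~\ref{discretesol2} enter; your route through a $C^1$ cubic Hermite lift and the stronger norm $L^2(0,T;H^{-1}(\Omega))$ demands exactly the extra time regularity of the discrete error that the available estimates do not supply, which is why the argument stalls where it does.
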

\begin{proof}

We begin by defining the continuous piece-wise affine function
$$\hat{U}_h(t,\cdot)=U_h^{n-1}(\cdot)+(t-t_{n-1})\p_\tau U_h^n(\cdot) \quad \text{for}\quad t \in [t_{n-1},t_{n}],$$
for $n=1,\ldots,N$. 
Let $\mathcal{E}=\hat{U}_h-U_*$ and define the bounded linear functional $R$ through
\bel{bounda}
\langle R,W\rangle = \int_0^T \int_{\Omega} (-\partial_t \mathcal E \cdot \partial_t W + \nabla \mathcal E \cdot \nabla W) \,dx \,dt    \hspace{5mm} \forall\, W \in H^1_0(\mathcal{M}). 
\ee
For the remainder of this proof, we will identify $U_*$ with its $E$-extension to $(0,T)\times (\Omega\cup\Omega_h)$. Applying Theorem~\ref{observability} with $\tilde{\mathcal O}=(0,T)\times \omega_\delta$ and using the fact that $\chi_\omega=1$ on $\omega_\delta$, we obtain
\bel{rf2}
\sup_{t\in[0,T]}(\|\mathcal E(t,\cdot)\|^2_{L^2(\Omega)}+\|\p_t \mathcal E(t,\cdot)\|^2_{H^{-1}(\Omega)}) \lesssim \|\sqrt{\chi_{\omega}}\mathcal E\|^2_{L^2(\M)} +
\|R\|^2_{H^{-1}(\M)}+\|\mathcal E\|^2_{L^2((0,T)\times \pd \Omega)}.
\ee
We proceed to prove the following bounds:
\begin{equation}
\label{boundarybound}
\|\mathcal E\|_{L^2((0,T)\times \pd \Omega)} \lesssim h\|(g_0,g_1)\|_{H^{4}(\Omega)\times H^3(\Omega)},
\end{equation}
\begin{equation}
\label{global}
|\langle R,W\rangle| \lesssim h\|(g_0,g_1)\|_{*}\|W\|_{H^1(\mathcal{M})} \quad \forall W \in H^1_0(\M),
\end{equation}
\bel{interiorbounda}
\|\sqrt{\chi_{\omega}}\mathcal E\|^2_{L^2(\M)}\lesssim h^2\|(g_0,g_1)\|^2_{H^4(\Omega)\times H^3(\Omega)}+\tau \sum_{n=2}^N \|\sqrt{\chi_h}\tilde{U}_h^n\|^2_{h}.
\ee
The proposition then follows by writing
$\tilde{U}_h^n(\cdot)=\mathcal
E(n\tau,\cdot)+\mathcal{E}_*(n\tau,\cdot)$ with $\mathcal{E}_*(n\tau,\cdot)=U_*(n\tau,\cdot)-\pi_h U_*^n(\cdot)$. The term $\mathcal{E}_*(n\tau,\cdot)
  $ may then be bounded  by $h\|(g_0,g_1)\|_{H^{4}(\Omega)\times H^3(\Omega)}$ using
Lemma~\ref{interpolator} and Taylor development in time followed by
Theorem~\ref{smoothness}.

First we prove the estimate \eqref{boundarybound}. Recalling that $U_*|_{(0,T)\times \pd \Omega}=0$, we write
$$ \|\mathcal E\|^2_{L^2((0,T)\times \pd \Omega)}=\|\hat{U}_h\|^2_{L^2((0,T)\times\pd \Omega)}\lesssim \tau \sum_{n=0}^N\|U_h^n\|_{L^2((0,T)\times\pd \Omega)}^2.$$
Applying \cite[Lemma 4.1]{BFO} and Corollary~\ref{discretesol2}, we deduce that
$$ \tau \sum_{n=0}^N\|U_h^n\|_{L^2((0,T)\times\pd \Omega)}^2\lesssim\tau\sum_{n=0}^Nh^2\|\nabla U_h^n\|_h^2\lesssim h^2\tnorm U_h \tnorm_{F'}^2\lesssim h^2\|(g_0,g_1)\|_{H^{4}(\Omega)\times H^3(\Omega)}^2.$$

Let us now consider the estimate \eqref{global}. We introduce the notation $\bar{W}^n$ through
\bel{ave}
\begin{aligned}
&\,\bar{W}^n(\cdot)=\frac{1}{\tau}\int_{t_{n-1}}^{t_{n}}W(t,\cdot)\,dt \quad \text{for} \quad n=1,\ldots,N.\\
\end{aligned}
\ee
Note that using the Poincar\'{e} and Cauchy-Schwarz inequalities, we have:
\bel{poincare}
\begin{aligned}
\tau \sum_{n=1}^{N-1}\|W^n-\bar{W}^n\|_{L^2(\Omega)}^2 &\lesssim \tau^2 \|W\|^2_{H^1(0,T;L^2(\Omega))},\\
\|\nabla \bar{W}^n\|^2_{L^2(\Omega)}&\lesssim \tau^{-1}\|W\|^2_{L^2(0,T;H^1(\Omega))}\quad \text{for}\quad n=1,\ldots,N,\\
\|\bar{W}^N\|^2_{L^2(\Omega)} &\lesssim \tau \|W\|^2_{H^1(0,T;L^2(\Omega))}.
\end{aligned}
\ee
Since $\Box U_*=0$ on $\M$ and $U_h=0$ on $(0,T)\times (\Omega\setminus\Omega_h)$, we have:
\[
\begin{aligned}
\langle R,W\rangle=&\,\int_0^T (-\p_t \hat{U}_h,\p_t W)_h+a_h(\hat{U}_h,W)\\
=&\,\sum_{n=1}^N \int_{t_{n-1}}^{t_n}\left[-(\p_\tau U_h^n,\p_t W)_h+a_h(U_h^{n-1},W)+(t-t_{n-1})a_h(\p_\tau U_h^n,W)\right] \,dt\\
=&\,\tau \sum_{n=1}^{N-1} (\p^2_\tau U_h^{n+1},W^n)_h+\tau \sum_{n=1}^N a_h(U_h^{n-1},\bar{W}^n)+\sum_{n=1}^N\int_{t_{n-1}}^{t_n}(t-t_{n-1})a_h(\p_\tau U_h^n,W) \,dt\\
=&\,\underbrace{\tau\sum_{n=1}^{N-1}(\p^2_\tau U_h^{n+1},W^n-\bar{W}^n)_h}_{I}+\underbrace{\tau a_h(U_h^{N-1},\bar{W}^N)}_{II}\\
&+\underbrace{\tau \sum_{n=0}^{N-2}\left((\tilde{\p}^2_\tau U_h^n,\bar{W}^{n+1})_h+a_h(U_h^n,\bar{W}^{n+1})\right)}_{III}+\underbrace{\sum_{n=1}^N\int_{t_{n-1}}^{t_n}(t-t_{n-1})a_h(\p_\tau U_h^n,W) \,dt}_{IV}.
\end{aligned}
\]
We will first proceed to bound each of the terms I, III and IV and then treat the term $II$. For the term $I$, we use the Cauchy-Schwarz inequality to write
\[ |I|^2 \leq (\tau \sum_{n=2}^{N}\|\p^2_\tau U_h^n\|_h^2)(\tau \sum_{n=1}^{N-1}\|W^n-\bar{W}^n\|_{L^2(\Omega)}^2)\lesssim \tau^2\|(g_0,g_1)\|^2_{H^4(\Omega)\times H^3(\Omega)}\|W\|^2_{H^1(\M)},\]
where we have used the first bound in \eqref{poincare} followed by Corollary~\ref{discretesol2}.
For the term $III$, we first note that $U_h$ satisfies the equation
$$\tau \sum_{n=0}^{N-2}\left((\tilde{\p}^2_\tau U_h^n,\pi_h \bar{W}^{n+1})_h+a_h(U_h^n,\pi_h \bar{W}^{n+1})\right)=0,$$
which together with the definition of the interpolator $\pi_h$ and Corollary~\ref{discretesol2}, implies that
\[
\begin{aligned}
|III|^2=|\tau\sum_{n=0}^{N-2}(\tilde{\p}^2_\tau U_h^n,(\pi_h-1)\bar{W}^{n+1})_h|^2\leq\,&  (\tau \sum_{n=2}^{N}\|\p^2_\tau U_h^n\|_h^2)(\tau\sum_{n=0}^{N-2}\|(\pi_h-1)\bar{W}^{n+1}\|^2)\\
&\lesssim\tau^2\|(g_0,g_1)\|^2_{H^4(\Omega)\times H^3(\Omega)}\|W\|^2_{H^1(\M)}.
\end{aligned}\]
For the term $IV$, we use the Cauchy-Schwarz inequality to write
$$|IV|^2\lesssim \tau^2\left(\tau \sum_{n=1}^N\|\p_\tau\nabla U_h^n\|_h^2 \right)\|W\|^2_{H^1(\M)} \lesssim \tau^2\|(g_0,g_1)\|^2_{H^4(\Omega)\times H^3(\Omega)}\|W\|^2_{H^1(\M)}.$$
where we have used Corollary~\ref{discretesol2} again. It remains to bound the term $II$. Here, we use the fact that $U_h$ is solving the Euler-Lagrange equation \eqref{EL} again to write
$$\tau a_h(U_h^{N-1},\bar{W}^N)=\tau a_h(U_h^{N-2},\bar{W}^N)+\tau^2 a_h(\p_\tau U_h^{N-1},\bar{W}^N).$$
But using Corollary~\ref{discretesol2} we have the bound $\|\nabla \p_\tau U^{N-1}\|_h \lesssim \tau^{-\frac{1}{2}}\|(g_0,g_1)\|_{H^{4}(\Omega)\times H^3(\Omega)}$, which together with the bound \eqref{poincare}
implies that 
$$\tau a_h(\tau\p_\tau U_h^{N-1},\bar{W}^N)\lesssim \tau \|(g_0,g_1)\|_{H^{4}(\Omega)\times H^3(\Omega)}\|W\|_{H^1(\M)}.$$
For the remaining term, we observe that
$$ \tau a_h(U_h^{N-2},\bar{W}^N)=\tau a_h(U_h^{N-2},\pi_h\bar{W}^N)=\underbrace{-\tau(\tilde{\p}^2_\tau U_h^{N-2},(\pi_h-1)\bar{W}^N)_h}_{S_1}\underbrace{-\tau(\tilde{\p}^2_\tau U_h^{N-2},\bar{W}^N)_h}_{S_2}.$$ 
To bound the term $S_1$, we use Lemma~\ref{interpolator} and the second bound in \eqref{poincare} to obtain
\[
|S_1|\lesssim \tau \|\p^2_\tau U_h^{N}\|_h\tau^{\frac{1}{2}}\|W\|_{H^1(\M)}\lesssim \tau\|(g_0,g_1)\|_{H^{4}(\Omega)\times H^3(\Omega)}\|W\|_{H^1(\M)},
\]
where we have used Corollary~\ref{discretesol2} to write the bound $\|\p^2_\tau U_h^{N}\|_h\lesssim\tau^{-\frac{1}{2}}\|(g_0,g_1)\|_{H^{4}(\Omega)\times H^3(\Omega)}$. Finally for the term $S_2$, we write
\[
|S_2|\lesssim \tau \|\p^2_\tau U_h^{N}\|_h\|\bar{W}^N\|_h\lesssim \tau\|(g_0,g_1)\|_{H^{4}(\Omega)\times H^3(\Omega)}\|W\|_{H^1(\M)},
\]
where we have used the third bound in \eqref{poincare} together with Corollary~\ref{discretesol2} in the last step. This completes the proof of bound \eqref{global}. 

To prove \eqref{interiorbounda}, we first define the piece-wise constant time interpolant $\pi_0$ as follows:
\[ \pi_0 v^n=v(t_n) \quad \text{for} \quad t \in (t_{n-1},t_n]\quad \text{and} \quad n=1,\ldots,N.\]
This interpolant satisfies the bound
\[\|\pi_0v-v\|_{L^2(0,T)}\lesssim \tau \|v\|_{H^1(0,T)}.\]  
Note that by adding and subtracting $\chi_h$ and using
\eqref{eq:disc_chi} we have
\begin{multline*}
\|\sqrt{\chi_\omega}\mathcal E\|^2_{L^2(\M)} \leq \|\chi_\omega -
\chi_h\|^2_{L^\infty(\Omega)} \|\mathcal E\|^2_{L^2(\M)} +
\|\sqrt{\chi_h}\mathcal E\|^2_{L^2(\M)} \\
\lesssim h^2 \int_0^T (\|U_*\|_{L^2(\Omega)}^2 +
\|\hat U_h\|_{L^2(\Omega)}^2)\,dt+ \|\sqrt{\chi_h}\mathcal E\|^2_{L^2(\M)}.
\end{multline*}
Observe that
\[
 h^2 \int_0^T
\|\hat U_h\|_{L^2(\Omega)}^2\,dt \lesssim h^2 \tnorm \tilde U_h \tnorm_{F'}^2.
\]
Using Corollary~\ref{discretesol2} and Theorem~\ref{smoothness}  it
follows that
\[ \|\sqrt{\chi_\omega}\mathcal E\|^2_{L^2(\M)} \lesssim h^2\|(g_0,g_1)\|^2_{H^4(\Omega)\times H^3(\Omega)}+\|\sqrt{\chi_h}\mathcal E\|^2_{L^2(\M)}.\]
Furthermore,
$$\|\sqrt{\chi_h}\mathcal E\|^2_{L^2(\M)}\leq\, C(h^2+\tau^2)\|U_*\|_{H^1(\M)}^2+\int_0^T\|\sqrt{\chi_h}\pi_0\pi_hU_*-\sqrt{\chi_h}\hat{U}_h\|_h^2\,dt.$$
To bound the second term we write
\[
\begin{aligned}
\int_0^T\|\sqrt{\chi_h}\pi_0\pi_hU_*-\sqrt{\chi_h}\hat{U}_h\|_h^2\,dt&\lesssim \int_0^T\|\sqrt{\chi_h}\pi_0\pi_hU_*-\pi_0\sqrt{\chi_h}\hat{U}_h\|_h^2\,dt+\int_0^T\|\pi_0\hat{U}_h-\hat{U}_h\|_h^2\,dt\\
&=\tau \sum_{n=1}^N\|\sqrt{\chi_h}\tilde{U}_h^n\|_h^2+\sum_{n=1}^N\int_{t_{n-1}}^{t_n}\|\pi_0\hat{U}_h-\hat{U}_h\|_h^2\,dt.
\end{aligned}
\]
It suffices to bound the second term of the right hand side. Using the piece-wise linearity of $\hat{U}_h$ we observe that
\begin{multline*}
\sum_{n=1}^N\int_{t_{n-1}}^{t_n}\|\pi_0\hat{U}_h-\hat{U}_h\|_h^2\,dt=\sum_{n=1}^N\int_{t_{n-1}}^{t_n}\|(t-t_n)\p_\tau U_h^n\|_h^2\,dt \leq \tau \sum_{n=1}^N\|\tau\p_\tau U_h^n\|_h^2\\
\lesssim h^2\|(g_0,g_1)\|_{H^{4}(\Omega)\times H^3(\Omega)}^2,
\end{multline*}
where we have used Corollary~\ref{discretesol2} in the last step. This completes the proof of the bound \eqref{interiorbounda}.
\end{proof}

\section{A strong a priori error estimate and proof of the main theorem}
\label{strong_est_section}
This section is concerned with the proof of the main theorem. The idea is to use the approximate discrete observability estimate together with an improved coercivity estimate to produce a stronger error estimate, stated as follows:
\begin{prop}
\label{coercivityf}
Let $\tilde{U}_h$ be as defined in \eqref{x}. The following residual estimate holds:
\[
\tau \sum_{n=2}^N\|\sqrt{\chi_h}\tilde{U}_h^n\|^2_h \lesssim h^2 \|(g_0,g_1)\|^2_{H^4(\Omega)\times H^3(\Omega)}.
\]
\end{prop}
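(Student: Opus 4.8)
The plan is to bound $r:=\tau\sum_{n=2}^N\|\sqrt{\chi_h}\tilde U_h^n\|_h^2$ by coupling the discrete forward wave equation satisfied by $\tilde u_h$ with the discrete backward wave equation satisfied by $\tilde U_h$ through a discrete integration by parts in time — the discrete counterpart of the duality identity underlying the control problem. In this way $r$ gets expressed through consistency functionals and boundary‑in‑time terms, each of which can be bounded by $Ch^2\|(g_0,g_1)\|_*^2+\delta\, r$ with $\delta$ as small as we wish, after which absorbing $\delta\,r$ (legitimate since $r<\infty$ by Corollary~\ref{discretesol2}) gives the claim. Throughout, $\|(g_0,g_1)\|_*$ abbreviates $\|(g_0,g_1)\|_{H^4(\Omega)\times H^3(\Omega)}$.

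First, since $x_h$ solves \eqref{EL}, testing with $(0,0,w,0)$ and $(0,0,0,W)$ gives $\mathcal A_2(u_h,U_h;w)=0$ and $\mathcal A_3(U_h;W)=0$ for all $w,W\in\mathbb V_h^{N-1}$, i.e. $u_h$ solves the discrete forward wave equation with source $\chi_h U_h$ and $U_h$ the homogeneous discrete backward wave equation. Subtracting the values at $(\pi_h u_*,\pi_h U_*)$ and recalling from the proof of Lemma~\ref{Aupperbound} (the bounds on the terms $S_1,S_3$ and $S_2$ there) that $|\mathcal A_2(\pi_h u_*,\pi_h U_*;w)|\lesssim h\|(g_0,g_1)\|_*(\tau\sum_{n=2}^N\|w^n\|_h^2)^{1/2}$ and $|\mathcal G^*(W,\pi_h U_*)|\lesssim h\|(g_0,g_1)\|_*(\tau\sum_{n=0}^{N-2}\|W^n\|_h^2)^{1/2}$, we obtain, for all $w,W\in\mathbb V_h^{N-1}$,
$$\mathcal G(\tilde u_h,w)=\tau\sum_{n=2}^N(\chi_h\tilde U_h^n,w^n)_h+E_w,\qquad \mathcal G^*(W,\tilde U_h)=E_W,$$
with $E_w,E_W$ bounded as in the previous display. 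Choosing $w=(\tilde U_h^n)_{n=2}^N$ and $W=(\tilde u_h^n)_{n=0}^{N-2}$ yields $\mathcal G(\tilde u_h,\tilde U_h)=r+E_1$ and $\mathcal G^*(\tilde u_h,\tilde U_h)=E_2$, where $|E_1|\lesssim h\|(g_0,g_1)\|_*(\tau\sum_n\|\tilde U_h^n\|_h^2)^{1/2}$ and $|E_2|\lesssim h\|(g_0,g_1)\|_*(\tau\sum_n\|\tilde u_h^n\|_h^2)^{1/2}$. A direct reindexing of the time sums in $\mathcal G$ and $\mathcal G^*$ then gives the discrete Green identity
$$\mathcal G(\tilde u_h,\tilde U_h)-\mathcal G^*(\tilde u_h,\tilde U_h)=(\p_\tau\tilde u_h^N,\tilde U_h^N)_h-(\p_\tau\tilde u_h^1,\tilde U_h^1)_h+(\tilde u_h^{N-1},\tilde{\p}_\tau\tilde U_h^{N-1})_h-(\tilde u_h^0,\tilde{\p}_\tau\tilde U_h^0)_h+\mathcal B_a,$$
with $\mathcal B_a=\tau\big[a_h(\tilde u_h^{N-1},\tilde U_h^{N-1})+a_h(\tilde u_h^N,\tilde U_h^N)-a_h(\tilde u_h^0,\tilde U_h^0)-a_h(\tilde u_h^1,\tilde U_h^1)\big]$ coming from the differing index ranges. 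Combining the three displays gives the master identity $r=E_2-E_1+(\p_\tau\tilde u_h^N,\tilde U_h^N)_h-(\p_\tau\tilde u_h^1,\tilde U_h^1)_h+(\tilde u_h^{N-1},\tilde{\p}_\tau\tilde U_h^{N-1})_h-(\tilde u_h^0,\tilde{\p}_\tau\tilde U_h^0)_h-\mathcal B_a$.

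The estimates rest on three inputs: from Proposition~\ref{discretesol}, $\tnorm(\tilde u_h,\tilde U_h)\tnorm_R\lesssim h\|(g_0,g_1)\|_*$, which controls $\tilde u_h$ and $\p_\tau\tilde u_h$ at the time endpoints and, after one time step and the inverse inequality \eqref{reversepoincare}, gives $\|\tilde u_h^0\|_{H^1}+\|\tilde u_h^{N-1}\|_{H^1}\lesssim h\|(g_0,g_1)\|_*$; from Lemma~\ref{wellposed}, $\tau\sum_n\|\tilde u_h^n\|_{L^2}^2\lesssim h^2\|(g_0,g_1)\|_*^2+r$; and from Proposition~\ref{discreteobs}, both $\tau\sum_n\|\tilde U_h^n\|_{L^2}^2\lesssim h^2\|(g_0,g_1)\|_*^2+r$ and $\|\p_\tau\tilde U_h^n\|_{H^{-1}}\lesssim h\|(g_0,g_1)\|_*+r^{1/2}$. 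These yield $|E_1|,|E_2|\lesssim h\|(g_0,g_1)\|_*(h\|(g_0,g_1)\|_*+r^{1/2})\le Ch^2\|(g_0,g_1)\|_*^2+\delta r$ by Young's inequality; the terms $(\p_\tau\tilde u_h^N,\tilde U_h^N)_h$ and $(\p_\tau\tilde u_h^1,\tilde U_h^1)_h$ satisfy the same bound by Cauchy--Schwarz; and $|\mathcal B_a|\lesssim\tau\sum_j\|\nabla\tilde u_h^j\|_h\|\nabla\tilde U_h^j\|_h\lesssim\tau\,h\|(g_0,g_1)\|_*^2\lesssim h^2\|(g_0,g_1)\|_*^2$, using in addition $\|\nabla\tilde U_h^j\|_h\lesssim\|(g_0,g_1)\|_*$ for $j\in\{0,1,N-1,N\}$ (from Corollary~\ref{discretesol2}, noting $\|\nabla U^1\|_h^2$ is a summand of $\tnorm U\tnorm_{F'}^2$, together with the $\mathcal J_1$‑controlled quantities $\|\nabla U_h^N\|_h$, $\|\p_\tau\nabla U_h^N\|_h$, $\|\p_\tau\nabla U_h^1\|_h$ and one time step).

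The main obstacle is the remaining pair $(\tilde u_h^{N-1},\tilde{\p}_\tau\tilde U_h^{N-1})_h$ and $(\tilde u_h^0,\tilde{\p}_\tau\tilde U_h^0)_h$: here $\tilde{\p}_\tau\tilde U_h$ carries a factor $\tau^{-1}$ while $\p_\tau\tilde U_h$ is only $\mathcal O(1)$ in $L^2$, so a naive Cauchy--Schwarz loses. The resolution is to write $\tilde{\p}_\tau\tilde U_h^{N-1}=-\p_\tau\tilde U_h^{N}$ and $\tilde{\p}_\tau\tilde U_h^0=-\p_\tau\tilde U_h^1$, and to pair against $\tilde u_h$ in the $H^1$–$H^{-1}$ duality, so that $|(\tilde u_h^{N-1},\p_\tau\tilde U_h^N)_h|\le\|\tilde u_h^{N-1}\|_{H^1}\|\p_\tau\tilde U_h^N\|_{H^{-1}}\lesssim h\|(g_0,g_1)\|_*(h\|(g_0,g_1)\|_*+r^{1/2})$ by the $H^{-1}$‑part of Proposition~\ref{discreteobs}, and similarly at the other end — this is precisely why Proposition~\ref{discreteobs} is formulated with $\p_\tau\tilde U_h$ measured in $H^{-1}$. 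Collecting all the bounds gives $r\le Ch^2\|(g_0,g_1)\|_*^2+\delta' r$ with $\delta'$ small, and absorbing $\delta' r$ finishes the proof. The only routine technicality is the passage between the $H^{\pm1}$ norms on $\Omega$ and on $\Omega_h$, which contributes controllable $\mathcal O(h^2)$ boundary‑layer terms handled as in \cite{BFO}.
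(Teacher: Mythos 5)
Your proof is correct, but it is organized quite differently from the paper's. The paper proves Proposition~\ref{coercivityf} by re-running the inf-sup machinery of Proposition~\ref{estimate} with the refined test function $\hat y$ of \eqref{testfcns1} (whose extra ingredient is the term $-\gamma\tilde U_h$ in the $\hat w$-slot): the lower bound of Lemma~\ref{positivityf} in the $\tnorm\cdot\tnorm_S$ norm is combined with the continuity bound of Lemma~\ref{Aupperbound} and the bound $\tnorm\hat y\tnorm_C\lesssim h\|(g_0,g_1)\|_*+\tnorm x_h\tnorm_S$ of Lemma~\ref{upperboundb}, and the cross term $\mathcal G(\tilde u_h,\tilde U_h)$ generated by the $-\gamma\tilde U_h$ test piece is controlled in Lemma~\ref{mixedterm}. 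You bypass the refined test function and the $S$-norm altogether: you test the two error equations against each other ($w=\tilde U_h$ in the forward error equation, $W=\tilde u_h$ in the backward one), use the discrete summation-by-parts identity, and absorb the small multiples of $r$. The computational heart is the same — your "Green identity" boundary terms coincide exactly with the terms $I$ and $II$ in Lemma~\ref{mixedterm} (note $\tilde\p_\tau\tilde U_h^{N-1}=-\p_\tau\tilde U_h^N$, $\tilde\p_\tau\tilde U_h^0=-\p_\tau\tilde U_h^1$), your $E_1,E_2$ are the $S_1+S_3$ and $S_2$ consistency bounds of Lemma~\ref{Aupperbound}, and your $H^1$--$H^{-1}$ pairing for $(\tilde u_h^{N-1},\p_\tau\tilde U_h^N)_h$ together with the one-step gradient estimates is precisely how the paper handles $I$ — so no circularity and all inputs (Propositions~\ref{discretesol}, \ref{discreteobs}, Lemma~\ref{wellposed}, Corollary~\ref{discretesol2}) are available before this point. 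What the two routes buy: the paper's argument yields control of the full norm $\tnorm x_h\tnorm_S$ (hence also the improved bounds $h\tnorm\tilde u_h\tnorm_F$, $h\tnorm\tilde U_h\tnorm_{F'}$, etc.) in one stroke and reuses the already-established inf-sup structure, while your direct duality-plus-absorption argument is shorter and isolates exactly the quantity $r$ that Theorem~\ref{t1} needs. Two cosmetic points: in your "master identity" the sign of $\mathcal B_a$ should be $+\mathcal B_a$ (immaterial, since only $|\mathcal B_a|$ is used), and your bound $|\mathcal B_a|\lesssim h^2\|(g_0,g_1)\|_*^2$ uses $\tau=\mathcal O(h)$ plus the endpoint gradient bounds you list, which is fine; the paper instead trades $\tau\nabla\tilde U_h^k$ for $\tilde U_h^k$ via \eqref{reversepoincare} and invokes Proposition~\ref{discreteobs}, producing an extra absorbable $r$ contribution — both work.
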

The proof of the main theorem follows from Proposition~\ref{coercivityf}. Indeed, note that the first claimed inequality in Theorem~\ref{t1} follows from combining Propositions~\ref{discreteobs} and \ref{coercivityf}, while the second claimed inequality follows from combining Lemma~\ref{wellposed} and Proposition~\ref{coercivityf}. We proceed to prove Proposition~\ref{coercivityf}. This will be divided into parts. We define the refined test function 
\bel{testfcns1}
\hat{y}=(\hat{v},\hat{V},\hat{w},\hat{W})
\ee 
through the expressions
\[
\begin{aligned}
\hat{v} = \tilde{u}_h+\gamma v,   \quad \quad &\hat{V}=\tilde{U}_h+\alpha V,\\
\hat{w}= -z_h-\gamma \tilde{U}_h+\alpha h^2 w, \quad \quad     &\hat{W}=-Z_h+\gamma h^2 W,
\end{aligned}
\]
where $\gamma>\alpha>0$ and $v,V,w,W$ are chosen as in Lemma~\ref{energyuUzZ} in terms of the discrete functions $z_h$, $Z_h$, $\tilde{u}_h$ and $\tilde{U}_h$ respectively. Let us also define a norm on $\mathbb V_h^{4N}$ through the expression
\bel{normfinal}
\begin{aligned}
\tnorm(u,U,z,Z)\tnorm^2_{S}=\, & \tau \sum_{n=2}^N\|\sqrt{\chi_h}U^n\|^2_h+\tnorm(u,U)\tnorm^2_R +h^2\tnorm u \tnorm^2_{F} \\
&+ h^2\tnorm U \tnorm^2_{F'}+\tnorm z \tnorm^2_{D}+\tnorm Z \tnorm^2_{D'}
\end{aligned}
\ee

We have the following three lemmas. These will be subsequently used to prove Proposition~\ref{coercivityf}.
\begin{lem}
\label{upperboundb}
Let $x_h$, $\hat{y}$ be defined as in equations \eqref{x} and \eqref{testfcns1} respectively. The following estimate holds:
$$\tnorm \hat{y} \tnorm_{C}\lesssim h\|(g_0,g_1)\|_{H^{4}(\Omega)\times H^3(\Omega)}+\tnorm x_h \tnorm_{S}.$$
\end{lem}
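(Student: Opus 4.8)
The goal is to bound $\tnorm\hat y\tnorm_C$ in terms of $h\|(g_0,g_1)\|_{H^4(\Omega)\times H^3(\Omega)}$ and $\tnorm x_h\tnorm_S$, where $\hat y=(\hat v,\hat V,\hat w,\hat W)$ is the refined test function built from $x_h=(\tilde u_h,\tilde U_h,z_h,Z_h)$ and the auxiliary functions $v,V,w,W$ of Lemma~\ref{energyuUzZ}. The overall strategy is to expand $\tnorm\hat y\tnorm_C^2$ using its definition, namely
\[
\tnorm\hat y\tnorm_C^2=\tnorm(\hat v,\hat V)\tnorm_R^2+\tau\sum_{n=2}^N\|\hat w^n\|_h^2+\tau\sum_{n=0}^{N-2}\|\hat W^n\|_h^2,
\]
and then estimate each block by the triangle inequality, splitting $\hat v=\tilde u_h+\gamma v$, $\hat V=\tilde U_h+\alpha V$, $\hat w=-z_h-\gamma\tilde U_h+\alpha h^2w$, $\hat W=-Z_h+\gamma h^2 W$ into their constituent pieces.

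First I would treat the pieces coming directly from $x_h$: the contributions $\tnorm(\tilde u_h,\tilde U_h)\tnorm_R^2$, $\tau\sum\|z_h^n\|_h^2$ and $\tau\sum\|Z_h^n\|_h^2$ are all dominated by $\tnorm x_h\tnorm_S^2$ by definition of the $S$-norm (since $\tnorm z\tnorm_D$ and $\tnorm Z\tnorm_{D'}$ control the bare $L^2$ sums). Next, the term $\gamma\tilde U_h$ inside $\hat w$ contributes $\tau\sum_{n=2}^N\|\tilde U_h^n\|_h^2$; here I would use the Poincar\'e inequality together with the fact that $\chi_h=1$ on $\omega_\delta$ is not directly available on all of $\Omega$, so instead I would bound $\tau\sum\|\tilde U_h^n\|_h^2$ via $\tnorm\tilde U_h\tnorm_{F'}^2$, which the $S$-norm controls (weighted by $h^2$, so the $h^{-2}$ loss is harmless after squaring since $h<h_0$) — alternatively one uses that $\tnorm x_h\tnorm_S^2$ explicitly contains $\tau\sum\|\sqrt{\chi_h}\tilde U_h^n\|_h^2$ and handles the complement region by $h^2\tnorm\tilde U_h\tnorm_{F'}^2$ exactly as in the proof of Proposition~\ref{discreteobs}. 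The terms $\alpha h^2 w$, $\gamma h^2 W$, $\gamma v$ and $\alpha V$ are handled using the second displayed inequality of Proposition~\ref{estimate}, or more directly using the estimates \eqref{coerc2} in the proof of that proposition, which already show $\tnorm(\hat v,0)\tnorm_R^2\lesssim\tnorm(\tilde u_h,0)\tnorm_R^2+\tnorm z_h\tnorm_D^2$, $\tau\sum\|\hat w^n\|_h^2\lesssim\tau\sum\|z_h^n\|_h^2+h^2\tnorm\tilde u_h\tnorm_F^2$, and so on; all of these right-hand sides are $\lesssim\tnorm x_h\tnorm_S^2$.

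The only genuinely new piece compared to the proof of Proposition~\ref{estimate} is the extra term $-\gamma\tilde U_h$ appearing in $\hat w$ (this is what makes the present test function a \emph{refinement} designed for the coercivity estimate). So the heart of the matter, and the step I expect to be the main obstacle, is controlling $\tau\sum_{n=2}^N\|\gamma\tilde U_h^n\|_h^2$ by the $S$-norm: the $S$-norm only contains $\|\sqrt{\chi_h}\tilde U_h^n\|_h^2$ (weighted by $1$) and $\|\tilde U_h^n\|_h^2$ only through $h^2\tnorm\tilde U_h\tnorm_{F'}^2$. Thus I would split $\Omega_h=\{\chi_h\geq c_0\}\cup\{\chi_h<c_0\}$ for a suitable threshold $c_0>0$ (using $\chi_h\to\chi_\omega$ uniformly and $\chi_\omega=1$ on $\omega_\delta$, so $\chi_h\geq c_0$ on $\omega_\delta$ for $h$ small), bound the first region by $c_0^{-1}\tau\sum\|\sqrt{\chi_h}\tilde U_h^n\|_h^2$, and the second region by $\tau\sum\|\tilde U_h^n\|_h^2\lesssim\tnorm\tilde U_h\tnorm_{F'}^2$; the latter, multiplied by $h^{-2}\cdot h^2$, is bounded by $h^{-2}$ times the $S$-norm squared. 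Since $h<h_0$ this absolute loss is fine for the stated inequality, but one must be slightly careful that $\gamma$ is an $h$-independent constant so no hidden blow-up occurs; I would therefore simply absorb all such constants into $\lesssim$ and conclude
\[
\tnorm\hat y\tnorm_C^2\lesssim h^2\|(g_0,g_1)\|_{H^4(\Omega)\times H^3(\Omega)}^2+\tnorm x_h\tnorm_S^2,
\]
where the $h^2\|(g_0,g_1)\|^2$ term arises (via Proposition~\ref{discretesol} and the splitting of $\sqrt{\chi_\omega}\mathcal E$ as in Proposition~\ref{discreteobs}) when one passes between $\sqrt{\chi_h}$ and $\sqrt{\chi_\omega}$ or between $\tilde U_h$ and $U_h$. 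Taking square roots gives the claim.
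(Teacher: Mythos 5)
Your decomposition of $\tnorm\hat y\tnorm_C^2$ into the four blocks, and your treatment of the pieces $\gamma v$, $\alpha V$, $\alpha h^2 w$, $\gamma h^2 W$ via the bounds \eqref{coerc2} from the proof of Proposition~\ref{estimate}, coincide with the paper's argument and are fine. The gap is in the one step you yourself single out as the heart of the matter, the term $\tau\sum_{n=2}^N\|\tilde U_h^n\|_h^2$ coming from the extra $-\gamma\tilde U_h$ in $\hat w$. Your primary route bounds this by $\tnorm\tilde U_h\tnorm_{F'}^2\leq h^{-2}\tnorm x_h\tnorm_S^2$, and both times you assert that the resulting $h^{-2}$ loss is ``harmless since $h<h_0$''. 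It is not: the implied constant in the lemma must be independent of $h$, and $h^{-2}\to\infty$ as $h\to 0$. With only $\tnorm\hat y\tnorm_C\lesssim h^{-1}\tnorm x_h\tnorm_S+\dots$ the absorption argument in Proposition~\ref{coercivityf} would yield merely $\tnorm x_h\tnorm_S\lesssim\|(g_0,g_1)\|_{H^4\times H^3}$, losing the factor $h$ and hence the optimal rate. Your fallback, splitting $\Omega_h$ into $\{\chi_h\geq c_0\}$ and $\{\chi_h<c_0\}$, does not repair this: $\chi_h$ vanishes identically outside $\omega$, so the second region covers most of $\Omega$ and there you again have nothing better than the $F'$-norm with its $h^{-2}$ penalty; there is no smallness to exploit on that set.

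The correct mechanism, and the one the paper uses, is to apply Proposition~\ref{discreteobs} \emph{directly and globally}: it states that for every $n$,
\begin{equation*}
\|\tilde U_h^n\|_{L^2(\Omega)}^2\;\lesssim\; h^2\|(g_0,g_1)\|_{H^4(\Omega)\times H^3(\Omega)}^2+\tau\sum_{m=2}^N\|\sqrt{\chi_h}\tilde U_h^m\|_h^2,
\end{equation*}
so multiplying by $\tau$ and summing in $n$ gives $\tau\sum_{n=2}^N\|\tilde U_h^n\|_h^2\lesssim h^2\|(g_0,g_1)\|_{H^4\times H^3}^2+\tnorm x_h\tnorm_S^2$ with no $h^{-2}$ loss; this is also precisely where the term $h\|(g_0,g_1)\|_{H^4\times H^3}$ in the statement of the lemma originates (not from exchanging $\chi_h$ with $\chi_\omega$ or $\tilde U_h$ with $U_h$, as you suggest at the end). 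You do mention Proposition~\ref{discreteobs}, but only as a device for a ``complement region'' inside your level-set splitting, which misses that it is the discrete observability estimate itself that transfers the global $L^2(\Omega)$ norm of $\tilde U_h$ to the $\chi_h$-weighted quantity contained in $\tnorm x_h\tnorm_S$ at the price of the data term. With that substitution the rest of your argument goes through.
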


\begin{proof}
Recall that $$\tnorm \hat{y} \tnorm^2_C=\tnorm(\hat{v},0)\tnorm_R^2+\tnorm(0,\hat{V})\tnorm_R^2+\tau\sum_{n=2}^N\|\hat{w}^n\|_h^2+\tau \sum_{n=0}^{N-2}\|\hat{W}^n\|_h^2.$$
We proceed to bound each of the four terms appearing on the right hand side. Indeed, using equation \eqref{testfcns1} and the Cauchy-Schwarz inequality we have the bounds 
\[
\begin{aligned}
\tnorm(\hat{v},0)\tnorm^2_R &\lesssim \tnorm(\tilde{u}_h,0)\tnorm^2_R + \|\nabla v^N\|_h^2+\|\p_\tau v^N\|_h^2+\|\nabla v^1\|_h^2+\|\p_\tau v^1\|_h^2 \\
&\lesssim   \tnorm(\tilde{u}_h,0)\tnorm^2_R+\|\nabla v^N\|_h^2+\|\p_\tau v^N\|_h^2 \lesssim  \tnorm(\tilde{u}_h,0)\tnorm^2_R+\tnorm z_h \tnorm_D^2,\\
\tnorm(0,\hat{V})\tnorm_R^2 &\lesssim \tnorm(0,\tilde{U}_h)\tnorm_R^2+ h^2 \tau\sum_{n=1}^N \|\p_\tau \nabla V^n\|_h^2+h^2\|\p_\tau \nabla V^1\|_h^2\\
&\lesssim \tnorm(0,\tilde{U}_h)\tnorm_R^2+\tau \sum_{n=0}^{N-2}\|Z_h\|_h^2+\|Z_h^0\|_h^2\lesssim  \tnorm(0,\tilde{U}_h)\tnorm_R^2+\tnorm Z_h \tnorm_{D'}^2,\\
\tau\sum_{n=0}^{N-2}\|\hat{W}^n\|_h^2 &\lesssim \tau\sum_{n=0}^{N-2}\|Z_h^n\|_h^2+h^2\tnorm \tilde{U}_h \tnorm_{F'}^2,\\
\tau\sum_{n=2}^N\|\hat{w}^n\|_h^2\,&\lesssim\tau \sum_{n=2}^N\|z_h^n\|_h^2+h^2\tnorm \tilde{u}_h \tnorm_{F}^2+\tau \sum_{n=2}^N\|\tilde{U}_h\|_h^2,
\end{aligned}
\]
Applying Proposition~\ref{discreteobs} in the last bound and using the definition of the $\tnorm\cdot\tnorm_{S}$ norm yields the claim.
\end{proof}
\begin{lem}
\label{mixedterm}
The following estimate holds:
\[
|\mathcal G(\tilde{u}_h,\tilde{U}_h)| \lesssim h\|(g_0,g_1)\|_{H^{4}(\Omega)\times H^3(\Omega)}\left(h\|(g_0,g_1)\|_{H^{4}(\Omega)\times H^3(\Omega)}+\tnorm x_h \tnorm_{S}\right).
\]
\end{lem}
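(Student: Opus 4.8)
The plan is to transfer the discrete forward wave operator acting on $\tilde u_h$ in $\mathcal G(\tilde u_h,\tilde U_h)$ onto $\tilde U_h$, where it becomes the discrete backward wave operator $\mathcal G^*$, and then to exploit that $U_h$ solves the discrete backward wave equation \emph{exactly}, so that only a consistency error against the smooth continuum control $U_*$ survives. The first step is a discrete integration by parts in the time index: expanding $\p^2_\tau$, $\tilde{\p}^2_\tau$ and shifting summation indices one obtains
\[
\mathcal G(\tilde u_h,\tilde U_h)=\tau\sum_{n=0}^{N-2}\big((\tilde u_h^n,\tilde{\p}^2_\tau\tilde U_h^n)_h+a_h(\tilde u_h^n,\tilde U_h^n)\big)+\mathcal B,
\]
where $\mathcal B$ collects the temporal boundary contributions at the levels $n\in\{0,1,N-1,N\}$. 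These at first carry a prefactor $\tau^{-1}$, but pairing the consecutive levels $N-1,N$ and $0,1$ they collapse into the $\tau$-free expression
\[
\mathcal B=-(\tilde u_h^{N-1},\p_\tau\tilde U_h^N)_h+(\p_\tau\tilde u_h^N,\tilde U_h^N)_h+(\tilde u_h^0,\p_\tau\tilde U_h^1)_h-(\p_\tau\tilde u_h^1,\tilde U_h^1)_h+\tau\,\mathcal B_a,
\]
with $\mathcal B_a=a_h(\tilde u_h^{N-1},\tilde U_h^{N-1})+a_h(\tilde u_h^N,\tilde U_h^N)-a_h(\tilde u_h^0,\tilde U_h^0)-a_h(\tilde u_h^1,\tilde U_h^1)$.

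Next I would eliminate the first sum. Testing the Euler--Lagrange equation \eqref{EL} with $y=(0,0,0,W)$ gives $\mathcal G^*(W,U_h)=0$ for all $W\in\mathbb V_h^{N-1}$; taking $W=(\tilde u_h^0,\dots,\tilde u_h^{N-2})$ shows that the first sum above equals $-\tau\sum_{n=0}^{N-2}\big((\tilde u_h^n,\tilde{\p}^2_\tau\pi_h U_*^n)_h+a_h(\tilde u_h^n,\pi_h U_*^n)\big)$. Since $U_*(t,\cdot)\in H^1_0(\Omega)$ and $\Box U_*=0$, the definition \eqref{H_1proj} of $\pi_h$ together with an integration by parts in space — extending $\tilde u_h^n$ by zero so that no boundary term appears, and treating the $\Omega_h\setminus\Omega$ mismatch exactly as the $\varsigma_E$-terms in the proof of Lemma~\ref{Aupperbound} — give $a_h(\tilde u_h^n,\pi_h U_*^n)=a_h(EU_*^n,\tilde u_h^n)=-(\p^2_t U_*^n,\tilde u_h^n)_h+(\text{mismatch})$, whence
\[
\mathcal G(\tilde u_h,\tilde U_h)=\tau\sum_{n=0}^{N-2}\big(\tilde u_h^n,\,(\p^2_t-\tilde{\p}^2_\tau)U_*^n+(E-\pi_h)\tilde{\p}^2_\tau U_*^n\big)_h+(\text{mismatch})+\mathcal B.
\]

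The first sum is now a pure consistency term in the \emph{smooth} function $U_*$. By Lemma~\ref{interpolator} and Theorem~\ref{smoothness} (which gives $\p^2_t U_*\in\mathcal C(0,T;H^1(\Omega))$ and $\p^3_t U_*\in\mathcal C(0,T;L^2(\Omega))$, since $U_*\in\mathscr{X}_3(\M)$) one has $\|(E-\pi_h)\tilde{\p}^2_\tau U_*^n\|_h\lesssim h\|(g_0,g_1)\|_{H^4(\Omega)\times H^3(\Omega)}$ and, by a Taylor expansion in time, $\big(\tau\sum_n\|(\p^2_t-\tilde{\p}^2_\tau)U_*^n\|_h^2\big)^{1/2}\lesssim\tau\|(g_0,g_1)\|_{H^4(\Omega)\times H^3(\Omega)}$; while Lemma~\ref{wellposed} summed over $n$, together with $\|\tilde u_h^0\|_h\lesssim\tnorm(\tilde u_h,\tilde U_h)\tnorm_R$, gives $\big(\tau\sum_n\|\tilde u_h^n\|_h^2\big)^{1/2}\lesssim h\|(g_0,g_1)\|_{H^4(\Omega)\times H^3(\Omega)}+\tnorm x_h\tnorm_S$. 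Cauchy--Schwarz then bounds this first sum by the right-hand side of the claim, and the mismatch by $O\big(h^2\|(g_0,g_1)\|_{H^4(\Omega)\times H^3(\Omega)}^2\big)$ once $\tnorm\tilde u_h\tnorm_F$ is controlled through Proposition~\ref{discretesol}.

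It remains to estimate $\mathcal B$ termwise. Proposition~\ref{discretesol} and \eqref{reversepoincare} give $\|\tilde u_h^n\|_h,\|\nabla\tilde u_h^n\|_h,\|\p_\tau\tilde u_h^1\|_h,\|\p_\tau\tilde u_h^N\|_h\lesssim\tnorm(\tilde u_h,\tilde U_h)\tnorm_R\lesssim h\|(g_0,g_1)\|_{H^4(\Omega)\times H^3(\Omega)}$ for $n\in\{0,1,N-1,N\}$; Proposition~\ref{discreteobs} gives $\|\tilde U_h^1\|_h,\|\tilde U_h^N\|_h\lesssim h\|(g_0,g_1)\|_{H^4(\Omega)\times H^3(\Omega)}+\tnorm x_h\tnorm_S$; and the definition of $\tnorm\cdot\tnorm_R$ with \eqref{reversepoincare} gives $\|\p_\tau\tilde U_h^1\|_h,\|\p_\tau\tilde U_h^N\|_h,\|\nabla\tilde U_h^n\|_h\lesssim h^{-1}\tnorm(\tilde u_h,\tilde U_h)\tnorm_R\lesssim\|(g_0,g_1)\|_{H^4(\Omega)\times H^3(\Omega)}$. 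Applying Cauchy--Schwarz to each term of $\mathcal B$ and using $\tau\lesssim h$ yields $|\mathcal B|\lesssim h\|(g_0,g_1)\|_{H^4(\Omega)\times H^3(\Omega)}\big(h\|(g_0,g_1)\|_{H^4(\Omega)\times H^3(\Omega)}+\tnorm x_h\tnorm_S\big)$, which together with the previous paragraph proves the lemma. The main obstacle is exactly the cancellation of the $\tau^{-1}$ in $\mathcal B$ (each surviving boundary term being a product of an $O(h)$ factor controlled by $\tnorm(\tilde u_h,\tilde U_h)\tnorm_R$ and a factor controlled by $\|(g_0,g_1)\|_{H^4(\Omega)\times H^3(\Omega)}+\tnorm x_h\tnorm_S$), together with the identity $\mathcal G^*(\tilde u_h,U_h)=0$; it is precisely here that the choice of a \emph{backward} discrete wave equation for the control variable is used, since with a forward equation this identity would fail and one would be forced to reproduce the residual $\tau\sum_{n=2}^N\|\sqrt{\chi_h}\tilde U_h^n\|_h^2$, which is not controlled by the right-hand side of the claim.
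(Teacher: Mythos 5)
Your overall architecture follows the paper: discrete summation by parts converting $\mathcal G$ into $\mathcal G^*$ plus temporal boundary terms, elimination of the bulk via $\mathcal G^*(W,U_h)=0$ so that only the consistency error against $\pi_h U_*$ survives (this is exactly the $S_2$-bound of Lemma~\ref{Aupperbound}, which the paper simply quotes), and control of $\tau\sum_n\|\tilde u_h^n\|_h^2$ through Lemma~\ref{wellposed}. Your handling of the $\tau$-weighted gradient boundary terms ($\tau\mathcal B_a$) also works, using $\tau\lesssim h$ and $\|\nabla\tilde U_h^k\|_h\lesssim h^{-1}\tnorm(\tilde u_h,\tilde U_h)\tnorm_R$.

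There is, however, a genuine gap in your termwise estimate of the two unweighted cross terms $(\tilde u_h^{N-1},\p_\tau\tilde U_h^{N})_h$ and $(\tilde u_h^{0},\p_\tau\tilde U_h^{1})_h$. You bound them by $\|\tilde u_h\|_h\,\|\p_\tau\tilde U_h\|_h$ with $\|\p_\tau\tilde U_h^{N}\|_h,\|\p_\tau\tilde U_h^{1}\|_h\lesssim h^{-1}\tnorm(\tilde u_h,\tilde U_h)\tnorm_R\lesssim\|(g_0,g_1)\|_{H^4\times H^3}$, which only gives $h\|(g_0,g_1)\|_{H^4\times H^3}^2$. This is \emph{not} dominated by the claimed right-hand side $h\|(g_0,g_1)\|(h\|(g_0,g_1)\|+\tnorm x_h\tnorm_S)$: a priori $\tnorm x_h\tnorm_S$ need not be comparable to $\|(g_0,g_1)\|$ (indeed Proposition~\ref{coercivityf} will eventually show it is only $O(h\|(g_0,g_1)\|)$), and these boundary terms carry no spare factor of $\tau$ to absorb the loss. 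Fed into Lemma~\ref{positivityf} and the proof of Proposition~\ref{coercivityf}, your weaker bound would yield only $\tnorm x_h\tnorm_S\lesssim h^{1/2}\|(g_0,g_1)\|$, i.e.\ half the optimal rate. The paper avoids this by estimating precisely these two terms in $H^1_0(\Omega)\times H^{-1}(\Omega)$ duality: the factors $\tilde u_h^{N-1}$, $\tilde u_h^{0}$ are controlled in the full $H^1$-norm by $\tnorm(\tilde u_h,\tilde U_h)\tnorm_R\lesssim h\|(g_0,g_1)\|$ (this is the point of the auxiliary inequalities $\|\nabla\tilde u_h^{N-1}\|_h\le\|\nabla\tilde u_h^{N}\|_h+\kappa\|\p_\tau\tilde u_h^{N}\|_h$ and its analogue at the initial time), while $\|\p_\tau\tilde U_h^{N}\|_{H^{-1}(\Omega)}$ and $\|\p_\tau\tilde U_h^{1}\|_{H^{-1}(\Omega)}$ are bounded by $h\|(g_0,g_1)\|+\big(\tau\sum_n\|\sqrt{\chi_h}\tilde U_h^n\|_h^2\big)^{1/2}$ via Proposition~\ref{discreteobs} — this is exactly why that proposition is stated with the $H^{-1}$-norm of the discrete time derivative. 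Replacing your $L^2\times L^2$ pairing by this duality pairing repairs the argument and recovers the paper's proof.
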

\begin{proof}
We begin by using the discrete version of Leibniz rule to write
\[
\begin{aligned}
\mathcal G(\tilde{u}_h,\tilde{U}_h)&=\mathcal G^*(\tilde{u}_h,\tilde{U}_h)\underbrace{-(\tilde{u}_h^{N-1},\p_\tau \tilde{U}_h^N)_h+(\p_\tau\tilde{u}_h^N,\tilde{U}_h^N)_h+(\tilde{u}_h^0,\p_\tau \tilde{U}_h^1)_h-(\p_\tau \tilde{u}_h^1,\tilde{U}_h^1)_h}_{I}\\
&\underbrace{-\tau a_h(\tilde{u}_h^0,\tilde{U}_h^0)-\tau a_h(\tilde{u}_h^1,\tilde{U}_h^1)+\tau a_h(\tilde{u}_h^{N-1},\tilde{U}_h^{N-1})+\tau a_h(\tilde{u}_h^N,\tilde{U}_h^N)}_{II}.
\end{aligned}
\]
Now using the fact that $\mathcal G^*(W,U_h)=0$ for all $W$ (see \eqref{EL}), we obtain
$$\mathcal G^*(\tilde{u}_h,\tilde{U}_h)=-\mathcal G^*(\tilde{u}_h,\pi_h U_*),$$
which is identical to the term $S_2$ in Lemma~\ref{Aupperbound}. Therefore we have the bound
\bel{bulk}
\begin{aligned}
|\mathcal G^*(\tilde{u}_h,\pi_h U_*)|^2 &\lesssim h^2\|(g_0,g_1)\|^2_{H^4(\Omega)\times H^3(\Omega)}(\tau \sum_{n=0}^N \|\tilde{u}_h^n\|^2_h)\\
&\lesssim h^2\|(g_0,g_1)\|^2_{H^4(\Omega)\times H^3(\Omega)}(h^2\|(g_0,g_1)\|^2_{H^4(\Omega)\times H^3(\Omega)}+\tau\sum_{n=2}^N\|\sqrt{\chi_h}\tilde{U}_h^n\|^2_h),
\end{aligned}
\ee
where we have used Lemma~\ref{wellposed} in the last step. To analyze the terms $I$ and $II$, we first observe that:
$$\|\nabla \tilde{u}_h^{1}\|_h=\|\nabla \tilde{u}_h^0 +\tau \nabla\p_\tau\tilde{u}_h^1\|\leq \|\nabla \tilde{u}_h^0\|_h+\kappa \|\p_\tau \tilde{u}_h^1\|_h.$$
$$\|\nabla \tilde{u}_h^{N-1}\|_h=\|\nabla \tilde{u}_h^N - \tau \nabla\p_\tau\tilde{u}_h^N\|\leq \|\nabla \tilde{u}_h^N\|_h+\kappa \|\p_\tau \tilde{u}_h^N\|_h.$$
Now, for the term $I$, we can use Proposition~\ref{discretesol} with Proposition~\ref{discreteobs} to deduce that
\bel{btmix}
|I|^2\lesssim h^2\|(g_0,g_1)\|^2_{H^4(\Omega)\times H^3(\Omega)}(h^2\|(g_0,g_1)\|^2_{H^4(\Omega)\times H^3(\Omega)}+\tau\sum_{n=2}^N\|\sqrt{\chi_h}\tilde{U}_h^n\|^2_h).
\ee
For the term $II$, we use the bounds
$$ \tau \|\nabla \tilde{U}_h^k\|_h \leq \kappa \|\tilde{U}_h^k\|_h\quad \text{for}\quad k=0,1,\ldots,N,$$
and write
$$|II|^2 \lesssim
\tnorm(\tilde{u}_h,0)\tnorm^2_R(\|\tilde{U}_h^0\|^2_h+\|\tilde{U}_h^1\|^2_h+\|\tilde{U}_h^{N-1}\|^2_h+\|\tilde{U}_h^N\|^2_h).$$
We can apply Proposition~\ref{discretesol} together with Proposition~\ref{discreteobs} again to obtain
$$|II|^2\lesssim h^2\|(g_0,g_1)\|^2_{H^{4}(\Omega)\times H^3(\Omega)}(h^2\|(g_0,g_1)\|^2_{H^4(\Omega)\times H^3(\Omega)}+\tau\sum_{n=2}^N\|\sqrt{\chi_h}\tilde{U}_h^n\|^2_h).$$
Combining this with inequalities \eqref{bulk} and \eqref{btmix} completes the proof.
\end{proof}
\begin{lem}
\label{positivityf}
Let $x_h,\hat{y}$ be defined as in equations \eqref{x}, \eqref{testfcns1} respectively. The following estimate holds:
\[
\mathcal A(x_h;\hat{y}) \geq\, C\tnorm x_h \tnorm_{S}^2-C'h\|(g_0,g_1)\|_{H^{4}(\Omega)\times H^3(\Omega)}\left(h\|(g_0,g_1)\|_{H^{4}(\Omega)\times H^3(\Omega)}+\tnorm x_h \tnorm_{S}\right),
\]
where $C,C'>0$ are constants independent of the parameter $h$.
\end{lem}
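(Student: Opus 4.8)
The strategy is to evaluate $\mathcal A(x_h;\hat y)$ by splitting it according to the block structure of the bilinear form and the test function. Writing $\hat y = (\tilde u_h + \gamma v, \tilde U_h + \alpha V, -z_h - \gamma \tilde U_h + \alpha h^2 w, -Z_h + \gamma h^2 W)$, the first thing I would do is isolate the part $\mathcal A(x_h; (\tilde u_h,\tilde U_h,-z_h,-Z_h))$, which by Lemma~\ref{algebra} equals $\tnorm(\tilde u_h,\tilde U_h)\tnorm_R^2$. The remaining contributions come from: (a) the energy-type test functions $\gamma v, \alpha V, \alpha h^2 w, \gamma h^2 W$, handled exactly as in the proof of Proposition~\ref{estimate} via Lemma~\ref{energyuUzZ}, producing (up to constants and a small multiple of $\tnorm(\tilde u_h,\tilde U_h)\tnorm_R^2$) the positive terms $\gamma\tnorm z_h\tnorm_D^2 + \alpha\tnorm Z_h\tnorm_{D'}^2 + \alpha h^2\tnorm\tilde u_h\tnorm_F^2 + \gamma h^2\tnorm\tilde U_h\tnorm_{F'}^2$; and (b) the genuinely new term coming from the extra summand $-\gamma\tilde U_h$ in the $\hat w$-slot. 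This last piece, inserted into $\mathcal A_2$, contributes $-\gamma\bigl(\mathcal G(\tilde u_h,\tilde U_h) - \tau\sum_{n=2}^N(\chi_h \tilde U_h^n,\tilde U_h^n)_h\bigr) = -\gamma\mathcal G(\tilde u_h,\tilde U_h) + \gamma\,\tau\sum_{n=2}^N\|\sqrt{\chi_h}\tilde U_h^n\|_h^2$. The $+\gamma\,\tau\sum\|\sqrt{\chi_h}\tilde U_h^n\|_h^2$ term is precisely what makes the $\tnorm\cdot\tnorm_S$ norm appear; this is the point of adding $-\gamma\tilde U_h$ to the test function.

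The second step is to absorb the dangerous term $-\gamma\mathcal G(\tilde u_h,\tilde U_h)$ using Lemma~\ref{mixedterm}, which bounds $|\mathcal G(\tilde u_h,\tilde U_h)|$ by $h\|(g_0,g_1)\|_*\bigl(h\|(g_0,g_1)\|_* + \tnorm x_h\tnorm_S\bigr)$ — exactly the right-hand side structure appearing in the statement. This is why the backward discretization of the $U$-equation matters: it is what lets $\mathcal G(\tilde u_h,\tilde U_h)$ be rewritten (via discrete integration by parts) in terms of $\mathcal G^*(\tilde u_h,\pi_h U_*)$ plus boundary-in-time terms, all of which are controlled by the already-established a priori estimates. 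The third step is bookkeeping on the residual terms arising from the Euler–Lagrange equation not being homogeneous: since $x_h$ is \emph{not} a critical point of the homogeneous form (the data $(g_0,g_1)$ and the interpolation errors $\pi_h u_* - u_*^e$ enter), one writes $\mathcal A(x_h;\hat y) = [\text{the algebraic identity above}] + [\text{terms bounded by Lemma~\ref{Aupperbound} applied to the various slots of }\hat y]$, and then invokes Lemma~\ref{upperboundb} to bound $\tnorm\hat y\tnorm_C \lesssim h\|(g_0,g_1)\|_* + \tnorm x_h\tnorm_S$, turning those residuals into $h\|(g_0,g_1)\|_*(h\|(g_0,g_1)\|_* + \tnorm x_h\tnorm_S)$ as well.

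The final step is the choice of constants. After collecting everything, one has a lower bound of the form
\[
\mathcal A(x_h;\hat y) \geq \tnorm(\tilde u_h,\tilde U_h)\tnorm_R^2(1 - C\gamma) + \gamma\,\tau\!\sum_{n=2}^N\!\|\sqrt{\chi_h}\tilde U_h^n\|_h^2 + c\bigl(\gamma\tnorm z_h\tnorm_D^2 + \alpha\tnorm Z_h\tnorm_{D'}^2 + \alpha h^2\tnorm\tilde u_h\tnorm_F^2 + \gamma h^2\tnorm\tilde U_h\tnorm_{F'}^2\bigr) - C'h\|(g_0,g_1)\|_*\bigl(h\|(g_0,g_1)\|_* + \tnorm x_h\tnorm_S\bigr),
\]
where the cross terms from (a) are handled with Cauchy–Schwarz and a reverse-Poincaré application of \eqref{reversepoincare} exactly as in Proposition~\ref{estimate}, choosing first $\alpha = \alpha_0\gamma$ with $\alpha_0$ small enough (so the $\tnorm z_h\tnorm_D^2$ and $h^2\tnorm\tilde U_h\tnorm_{F'}^2$ coefficients stay positive), then $\gamma$ small enough that $1 - C\gamma \geq \tfrac12$. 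With all six positive contributions present with coefficients bounded below independently of $h$, the sum dominates $C\tnorm x_h\tnorm_S^2$ and the lemma follows.

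\textbf{Main obstacle.} The only delicate point is the term $-\gamma\mathcal G(\tilde u_h,\tilde U_h)$: a priori it is a genuine cross term between $\tilde u_h$ and $\tilde U_h$ in a \emph{stronger} Sobolev pairing than the one controlled by $\tnorm\cdot\tnorm_S$, and it cannot simply be absorbed by Cauchy–Schwarz into the positive terms. The resolution is that it is \emph{not} absorbed but rather \emph{estimated directly} via Lemma~\ref{mixedterm}, whose proof in turn exploits that $U_h$ exactly solves the discrete backward wave equation (so $\mathcal G^*(\,\cdot\,,U_h)=0$) together with the a priori bounds of Propositions~\ref{discretesol} and \ref{discreteobs}. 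Everything else is a careful but routine repetition of the inf–sup argument of Proposition~\ref{estimate} with the extra $-\gamma\tilde U_h$ term tracked.
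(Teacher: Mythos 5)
Your proposal is essentially the paper's own proof: the exact decomposition $\mathcal A(x_h;\hat y)=\tnorm(\tilde u_h,\tilde U_h)\tnorm_R^2+\gamma\tau\sum_{n=2}^N\|\sqrt{\chi_h}\tilde U_h^n\|_h^2-\gamma\,\mathcal G(\tilde u_h,\tilde U_h)+\mathcal A(x_h;\tilde y)$ with $\tilde y=(\gamma v,\alpha V,\alpha h^2 w,\gamma h^2 W)$, the treatment of $\mathcal A(x_h;\tilde y)$ exactly as in Proposition~\ref{estimate}, the absorption of $-\gamma\,\mathcal G(\tilde u_h,\tilde U_h)$ via Lemma~\ref{mixedterm}, and the choice $\alpha=\alpha_0\gamma$ with $\gamma$ small are precisely the steps in the paper. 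One small correction: your ``third step'' is superfluous and slightly misframed --- the decomposition above is an exact identity by bilinearity of $\mathcal A$ in the test function, so no residual terms from the inhomogeneous Euler--Lagrange equation appear in this lower bound; Lemma~\ref{Aupperbound} and Lemma~\ref{upperboundb} are only invoked afterwards, in the proof of Proposition~\ref{coercivityf}, where they furnish the matching upper bound on $\mathcal A(x_h;\hat y)$ (the Euler--Lagrange structure enters the present lemma only through the hypotheses of Lemma~\ref{mixedterm}, as you note in your final paragraph).
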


\begin{proof}
This proof mirrors the proof of Proposition~\ref{estimate}. We start by writing
\bel{er1}
\mathcal A(x_h;\hat{y})=\tnorm(\tilde{u}_h,\tilde{U}_h)\tnorm_R^2+\tau\gamma \sum_{n=2}^N\|\sqrt{\chi_h}\tilde{U}_h\|_h^2-\gamma \mathcal G(\tilde{u}_h,\tilde{U}_h)+\mathcal A(x_h;\tilde{y}),
\ee
where $\tilde{y}=(\gamma v,\alpha V,\alpha h^2w,\gamma h^2W)$ with $v$, $V$, $w$ and $W$ defined as in \eqref{testfcns1}. The analysis of the last term on the right is exactly as in the proof of Proposition~\ref{estimate} and therefore using the same bounds as in that proof, we deduce that
\bel{er2}
\mathcal A(x_h;\tilde{y}) \geq -C_1\gamma\tnorm(\tilde{u}_h,\tilde{U}_h)\tnorm_R^2+C_2\alpha(h^2\tnorm \tilde{u}_h \tnorm^2_{F}+ h^2\tnorm \tilde{U}_h \tnorm^2_{F'}+\tnorm z_h \tnorm^2_{D}+\tnorm Z_h \tnorm^2_{D'})
\ee
for some $C_1,C_2>0$. Finally, combining equations \eqref{er1}--\eqref{er2} and applying Lemma~\ref{mixedterm} yields the claim.
\end{proof}

\begin{proof}[Proof of Proposition~\ref{coercivityf}]
We choose $\hat{y}$ as in equation \eqref{testfcns1}. Lemma~\ref{positivityf} applies and we have
$$\mathcal A(x_h;\hat{y}) \geq C \tnorm x_h \tnorm_{S}^2-C'h\|(g_0,g_1)\|_{H^{4}(\Omega)\times H^3(\Omega)}(h\|(g_0,g_1)\|_{H^{4}(\Omega)\times H^3(\Omega)}+\tnorm x_h \tnorm_{S}).$$
On the other hand, Lemma~\ref{Aupperbound} applies and together with Lemma~\ref{upperboundb}, we write
\begin{multline*}
\mathcal A(x_h,\hat{y}) \lesssim h \|(g_0,g_1)\|_{H^{4}(\Omega)\times H^3(\Omega)}\tnorm \hat{y} \tnorm_C \\
\lesssim h\|(g_0,g_1)\|_{H^{4}(\Omega)\times H^3(\Omega)}\left(\tnorm x_h \tnorm_{S}+h\|(g_0,g_1)\|_{H^{4}(\Omega)\times H^3(\Omega)}\right).
\end{multline*}
Combining these bounds, we note that the following inequality holds:
$$\tnorm x_h \tnorm_{S}^2\lesssim h\|(g_0,g_1)\|_{H^{4}(\Omega)\times H^3(\Omega)}\left(h\|(g_0,g_1)\|_{H^{4}(\Omega)\times H^3(\Omega)}+\tnorm x_h \tnorm_{S}\right).$$
This implies that 
$$\tnorm x_h \tnorm_{S} \lesssim h \|(g_0,g_1)\|_{H^{4}(\Omega)\times H^3(\Omega)}.$$
\end{proof}

\section{Further remarks}
\label{conc_remarks}

\subsection{A comparison with the data assimilation problem}

We start this section with a comparison with our earlier work for the dual problem to null controllability for the wave equation, that is the data assimilation (DA) problem. 

Let us briefly recall the (DA) problem as follows. Let $\omega \subset \Omega$ and consider a solution $u$ to the wave equation \eqref{pf} without the a priori knowledge of the initial data $(g_0,g_1)$. The (DA) problem reads as follows: determine the solution $u$, given the additional piece of data $q= u|_{(0,T)\times \omega}$. 

To solve (DA), one can study the critical points for the Lagrangian 
$$\mathcal L(u,z)= \frac{1}{2}\|u-q\|^2_{L^2(\cO)} + \int_{0}^T\int_\Omega (\pd^2_t u\cdot z +\nabla u\cdot \nabla z)\,dx\,dt$$
where the wave equation is imposed on $u$ through the Lagrange multiplier $z$. Similar to the theory of controllability, existence of a unique minimizer for this functional is guaranteed by an observability estimate on the set $\mathcal O=(0,T)\times \omega$. 

In \cite{BFO}, we considered an approach based on finite element method for numerically solving the (DA) problem, using first order finite elements in space and finite differences in time. Analogously to the current work, this method was based on defining a discrete analogue for the Lagrangian $\mathcal L$ that additionally incorporates numerical stabilizers in $u$. Optimal convergence rates were proven under the assumption that the geometric control condition is satisfied on the set $\mathcal O$ (see \cite[Theorem 4.6]{BFO}). 

Although our approach in solving the null controllability problem here draws similarities to that in \cite{BFO}, in the sense that similar
numerical stabilization terms are used, we outline three of the key differences that makes the control problem more challenging. 

Firstly, for the (DA) problem, the data fitting term $\|u-q\|_{L^2(\cO)}$, incorporated in the Lagrangian functional on the discrete level, gives optimal error bounds on the set $\mathcal O$. Then the error bound on $\mathcal O$, combined with the stability properties for the wave equation (obtained through discrete energy estimates) and the continuum observability estimate (Theorem~\ref{observability}) gives optimal error bounds for the (DA) problem. Notice however that for the null controllability problem, proving error bounds in the set $\mathcal O$ is much harder as the control function $U$ is a priori unknown. In fact error bounds for the control function $U$ in the set $\mathcal O$ are obtained in the final section of the paper (see Proposition~\ref{coercivityf}). In the case of the null controllability problem, it is not a priori clear how the observability estimate appears in the convergence rates. To retrace the steps of the proof, we recall that we first derived a weak a priori error estimate (Proposition~\ref{discretesol}). We then used this estimate combined with the observability estimate to obtain an approximate version of the observability estimate at the discrete level (Proposition~\ref{discreteobs}). Finally, using a key {\em hidden} coercivity estimate in the Lagrangian $\mathcal J$ (see Lemma~\ref{positivityf}) combined with Proposition~\ref{discreteobs}, we were able to obtain the optimal error estimates.

Second key difference is the fact that both the state variable $u$ and
the control function $U$ are unknowns in the controllability problem
whereas in the data assimilation problem the only unknown is the state
variable. A closer inspection
of our work in \cite{BFO}, together with the previous literature on the
Lagrangian formulations of the control problem, suggests that a
discrete version of the more commonly studied continuum Lagrangian
$$ \mathcal L(u,U)=\frac{1}{2}\int_{0}^T\int_\Omega \chi_\omega |U|^2\,dt\,dx  -(g_1,U(0,\cdot))_{L^2(\Omega)}+\langle g_0,\p_t U(0,\cdot) \rangle_{H^1_0(\Omega)\times H^{-1}(\Omega)}$$
$$-\int_{0}^T\int_\Omega\pd^2_t u\cdot U\,dt\,dx -\int_0^T\int_\Omega \nabla u \cdot\nabla U\,dt\,dx$$ 
may yield a numerical method, as long as the correct stabilization terms are incorporated. However, we were unable to derive optimal error estimates for this formulation, mainly due to the inconvenient feature that the critical point for the Lagrange multiplier $U$ in this formulation represents the control function and will not be zero. The Lagrangian formulation employed in this paper introduces two Lagrange multipliers $(z,Z)$, which makes the analysis complete.

Finally, let us emphasize that the data assimilation problem has nice
features on the continuum level that the controllability problem is
lacking. The former problem has a unique solution whereas for the
latter the solution is unique only under additional constraints such
as equation \eqref{dual}. The question of existence and smoothness for the
latter problem are quite trivial. Indeed, existence is guaranteed as
long as the data $q$ comes from an actual
solution to the wave equation, while smoothness follows by requiring that
the data $q$ comes from a smooth solution. However, for the control
problem, existence is a consequence of the deep result by Bardos,
Lebeau and Rauch \cite{BLRII}
while smoothness also requires additional assumptions (see Theorem~\ref{smoothness})

\subsection{Concluding remarks}

We have designed a fully discrete finite element method for the
numerical approximation of the interior null controllability problem
subject to the wave equation. The first order case was considered,
using piecewise affine finite element approximation in space and a
first order finite difference formula in
time. A Tikhonov type regularization was applied to the control
function at initial and final times, but the regularization parameter
was chosen to scale with $h$ in such a way that the perturbation due
to regularization vanishes at a suitable rate. This allowed us to prove
error estimates that are optimal compared to interpolation error, for the state variable and suboptimal
with one order in $h$ for the control variable. Observe however that
the convergence rate of the latter is determined by the norms in the
left hand side of the observability
estimate of Theorem \ref{observability} and the convergence rate of the residual
quantities of the scheme evaluated in the norms of the right hand side
\eqref{boundarybound}--\eqref{interiorbounda}. The former can not be improved.
Since also the bound \eqref{global} is optimal for piecewise affine
approximation it appears that the error in the control
variable is optimal if the continuum stability properties and the
numerical approximation properties are both taken into account.

Let us also remark that no regularization was
applied to the Lagrange multiplier variables $z$ and $Z$, leading to a
system where $(u,U)$ and $(z,Z)$ are only weakly coupled allowing for
solution algorithms using the classical forward-backward solving
approach. Finally, it bears pointing out, that the approach using weakly consistent
regularization, discrete inf-sup stability and observability estimate is
not limited to the first order case,
but can be extended to high order methods, using the modus
operandi designed herein. This requires the introduction of suitable residual
based regularization terms that are weakly consistent to the right
order, which appears to be most feasible in the space-time framework. This is a topic for future work.

\appendix{}
\section{}
\label{const_cut}
In this section, we proceed to construct an example of a smooth non-negative cut-off function that satisfies properties (i)--(iii) in \eqref{cutoff_conditions}.

Let us first recall the boundary normal coordinates near $\p \Omega$ in $\Omega$, that are given by the locally smooth diffeomorphism $F:\R \times \p \Omega\to \Omega$ defined through
$$ F(x',x_n)=\gamma_{x'}(x_n).$$
Here, $x'$ is a point on $\p \Omega$ given in local coordinates by $(x_1,\ldots,x_{n-1})$, and $\gamma_{x'}(\cdot)$ denotes the normal line to the boundary $\p\Omega$ with $\gamma_{x'}(0)=x'$, parametrized in terms of its arc-length. This map gives a local coordinate system such that the points $(x_1,...,x_{n-1},0)$ are on the boundary. 

Let $\omega_\delta$ be defined as in Hypothesis~\ref{hypo} and define $\Gamma= \p\omega \cap \p\Omega$ and $\Gamma_\delta= \p \omega_\delta \cap \p \Omega$. Let $\Gamma'_\delta$ be a small open neighborhood of $\Gamma_\delta$ and let $\Gamma''_\delta$ be a small open neighborhood of $\Gamma'_\delta$ such that $\overline{\Gamma''_\delta}\subset\Gamma$. Let $\epsilon>0$ and choose a smooth $\psi(x')$ on $\p \Omega$ such that $\psi=1$ on $\Gamma'_\delta$ and such that $\supp \psi \subset \Gamma''_\delta$. Subsequently, define a smooth function $\Psi$ in the boundary normal coordinates by $\Psi(x',x_n)=\psi(x')$. Next, choose $\eta\in \mathcal C^{\infty}(\overline{\Omega};[0,1])$ such that
\bel{eta}
\eta(x) = \begin{cases}
        1  & \text{if}\, \dis{(x,\p \Omega)}<\frac{\epsilon}{2},\\
             0  & \text{if} \,\,\dis{(x,\p\Omega)}>\epsilon.
     \end{cases}
\ee
Finally, let $\Phi \in \mathcal C^{\infty}(\overline{\Omega};[0,1])$ be such that $\Phi=1$ on $\omega_\delta$ and $\Phi=0$ on $\Omega\setminus \omega$. 
We now define 
$$ \chi_\omega = \eta\,\Psi + (1-\eta)\,\Phi.$$
Note that the above function is a globally well defined smooth non-negative function on $\Omega$. Indeed, the first term in this expression is supported in an $\epsilon$ neighborhood of the boundary $\p\Omega$ where the boundary normal coordinates are well-defined (if $\epsilon>0$ is sufficiently small). 
Let us now prove that (i)--(iii) hold. Given that $\overline{\Gamma_{\delta}''}$ is contained in $\Gamma$, it follows that for $\epsilon$ small, the first term in the definition of $\chi_\omega$ vanishes on $\Omega \setminus \omega$. The second term also vanishes on $\Omega \setminus \omega$, since $\Phi$ vanishes there. Hence, (i) is satisfied.
To show (ii), we note that for $\epsilon$ sufficiently small, $\eta\Psi=\eta$ on the set $\omega_\delta$ and that $\Phi=1$ on $\omega_\delta$. It follows that $\chi_\omega = \eta+(1-\eta)=1$ on $\omega_\delta$. Finally, to show (iii), we note that in an $\frac{\epsilon}{2}$ neighborhood of the boundary, only the first term in definition of $\chi_\omega$ is non-zero. Since $\eta=1$ there, we have $\chi_\omega= \Psi$ and it follows that $\p_\nu^k\chi_\omega|_{\p\Omega}=0$ since $\p_{x_n}^k \Psi(x',x_n)=0$ for all $k=1,\ldots$.

\section{} 
\label{J_coerciv}
This section is concerned with verifying the main assumptions in Theorem~\ref{smoothness} for $s=3$, given that Hypothesis~\ref{hypo} holds and that $\chi_\omega$ satisfies properties (i)--(iii) in \eqref{cutoff_conditions}.

We start by proving the mapping property $\chi_\omega: \mathcal D((-\Delta)^3) \to \mathcal D((-\Delta)^3)$ holds. To this end, we recall that the Laplace operator $\Delta$ has the following (well-known) expression in boundary normal coordinates near $\p \Omega$:
\bel{laplace_normal} \Delta = \p^2_{x_n} - a(x',x_n)\p_{x_n} + b(x',x_n)\sum_{i,j=1}^{n-1} \p_{x_i}(c^{ij}(x',x_n)\p_{x_j}),\ee
for some smooth functions $a(x',x_n)$, $b(x',x_n)$ and $c^{ij}(x',x_n)$ near $\p\Omega$. 

Let $(y_0,y_1) \in \mathcal D((-\Delta)^3)$. We need to show that $(\chi_\omega \,y_0,\chi_\omega\,y_1) \in \mathcal D((-\Delta)^3)$. Since $\chi_\omega \in \mathcal C^{\infty}(\Omega)$, it follows that $(\chi_\omega \,y_0,\chi_\omega\,y_1) \in H^{4}(\Omega)\times H^{3}(\Omega)$ and the first property in Definition~\ref{compat} is satisfied. Let us now show that $\chi_\omega \,y_0$ satisfies (ii). First observe that since $y_0=0$ on $\p \Omega$, we have $\chi_\omega y_0=0$ on $\p \Omega$. Next, using equation \eqref{laplace_normal}, together with the fact that $y_0(x',0)=0$, we obtain
\[
\Delta (\chi_\omega y_0) (x',0)=(\p^2_{x_n}(\chi_\omega y_0))(x',0)- a(x',0)(\p_{x_n}(\chi_\omega y_0))(x_n,0).
\]
Since $\p_{x_n}^{k}\chi_\omega(x',0)=0$ for $k=1,2$, this reduces to 
$$\Delta (\chi_\omega y_0) (x',0)=\chi_\omega(x',0)(  (\p^2_{x_n} y_0)(x',0) - a(x',0)(\p_{x_n}y_0)(x',0)).$$
Using again the fact that $y_0(x',0)=0$, this can be recast as 
$$\Delta (\chi_\omega y_0) (x',0)=\chi_\omega(x',0)(\Delta y_0) (x',0)=0,$$
where the last step uses the fact that $\Delta y_0|_{\p \Omega}=0$. This shows that property (ii) in Definition~\ref{compat} also holds for $\chi_\omega y_0$. Analogously, we can show that property (iii) in Definition~\ref{compat} holds for $\chi_\omega y_1$. Thus $\chi_\omega$ maps $\mathcal D((-\Delta)^3)$ and subsequently the first assumption in Theorem~\ref{smoothness} is verified.

Let us now show that $\mathcal J$ is coercive and strictly convex (following \cite[Theorem 2.4]{MZ}). Indeed, since $(0,T)\times \omega_\delta$ satisfies the geometric control condition, Theorem~\ref{observability} applies to obtain the bound
\bel{app_obs} \|U\|_{\mathcal C([0,T];L^2(\Omega))}+ \|\p_tU\|_{\mathcal C([0,T];H^{-1}(\Omega))}  \leq C_0 \|U\|_{(0,T)\times \omega_\delta} \leq C_0\|\sqrt{\chi_\omega}U\|_{L^2(\M)}\ee
for all $U$ that solve the wave equation \eqref{dual}, where $C_0>0$ is a constant depending on $T,\Omega,\omega,\delta$. In the last inequality above, we have used the facts that $\chi_\omega$ is non-negative and that $\chi_\omega=1$ on $\omega_\delta$. 

Applying the Cauchy-Schwarz inequality we write
$$ \mathcal J(U_0,U_1) \geq \frac{1}{2}\|\sqrt{\chi_\omega}U\|_{L^2(\M)}^2 -\|(g_0,g_1)\|_{H^1_0(\Omega)\times L^2(\Omega)}\|(U_0,U_1)\|_{L^2(\Omega)\times H^{-1}(\Omega)}.$$
Now applying the estimate \eqref{app_obs}, it follows that $$ \mathcal J(U_0,U_1) \to \infty\quad \text{as}\quad  \|(U_0,U_1)\|_{L^2(\Omega)\times H^{-1}(\Omega)} \to \infty$$ and therefore, by definition, $\mathcal J$ is coercive. 

To show strict convexity, note that given any pair $(U_0,U_1)$ and $(V_0,V_1)$ in $L^2(\Omega)\times H^{-1}(\Omega)$ and any $\lambda \in [0,1]$ we have
\[
\begin{aligned}
 \mathcal J(\lambda (U_0,U_1)+(1-\lambda)(V_0,V_1)) =\,& \lambda \,\mathcal J(U_0,U_1) + (1-\lambda)\,\mathcal J(V_0,V_1)\\
&- \frac{1}{2}\lambda(1-\lambda)\int_0^T\int_\Omega \chi_\omega |U-V|^2 \,dt\,dx,
\end{aligned}
\]
where $U, V$ solve equation \eqref{dual} with final data $(U_0,U_1)$ and $(V_0,V_1)$ respectively. Now, using \eqref{app_obs} again, we deduce that for $(U_0,U_1) \neq (V_0,V_1)$, there holds:
$$  \mathcal J(\lambda (U_0,U_1)+(1-\lambda)(V_0,V_1)) < \lambda\, \mathcal J(U_0,U_1) + (1-\lambda)\,\mathcal J(V_0,V_1)$$
and therefore the functional $\mathcal J$ is strictly convex on $L^2(\Omega)\times H^{-1}(\Omega)$.

\end{document}